\documentclass[a4paper,10pt]{article}
\usepackage{mathtext}
\usepackage[T1,T2A]{fontenc}
\usepackage[cp1251]{inputenc}
\usepackage[english]{babel}
\usepackage{amsmath}
\usepackage{amsfonts}
\usepackage{amssymb}
\usepackage{mathrsfs}
\usepackage{amsthm}
\usepackage{titling}
\usepackage{multicol}
\usepackage{graphicx}
\usepackage{wrapfig}
\usepackage{appendix}
\usepackage{stmaryrd}
\usepackage{relsize}

\usepackage{color}
\usepackage{euscript}

\usepackage{cite}
\usepackage{mathtools}
\usepackage{enumerate}

\newcommand{\one}{{\bf 1}}

\newcommand{\N}{\mathbb{N}}
\newcommand{\Z}{\mathbb{Z}}
\newcommand{\El}{\mathfrak{L}}
\newcommand{\BEl}{\Sigma_{\scriptscriptstyle\mathrm{bl}}}
\newcommand{\Arc}{\mathfrak{A}}

\newcommand{\arc}{\mathfrak{a}}
\newcommand{\arcr}{\mathfrak{a}^{\mathrm{r}}}
\newcommand{\arcl}{\mathfrak{a}^{\mathrm{l}}}
\newcommand{\BMO}{\mathrm{BMO}}
\newcommand{\eps}{\varepsilon}
\newcommand{\av}[2]{\langle {#1}\rangle_{{}_{#2}}}
\renewcommand{\le}{\leqslant}

\renewcommand{\leq}{\leqslant}
\renewcommand{\geq}{\geqslant}

\newcommand{\FixedBoundary}{\partial_{\mathrm{fixed}}}
\newcommand{\FreeBoundary}{\partial_{\mathrm{free}}}

\newcommand{\ur}{u_{\scriptscriptstyle\mathrm{R}}}
\newcommand{\ul}{u_{\scriptscriptstyle\mathrm{L}}}
\newcommand{\vr}{v_{\scriptscriptstyle\mathrm{R}}}
\newcommand{\vl}{v_{\scriptscriptstyle\mathrm{L}}}
\newcommand{\Rt}{\Omega_{\scriptscriptstyle\mathrm{R}}}
\newcommand{\Lt}{\Omega_{\scriptscriptstyle\mathrm{L}}}
\newcommand{\SRt}{\Sigma_{\scriptscriptstyle\mathrm{R}}}
\newcommand{\SLt}{\Sigma_{\scriptscriptstyle\mathrm{L}}}
\newcommand{\Ch}{\Omega_{\scriptscriptstyle\mathrm{Ch}}}

\newcommand{\Ang}{\Omega_{\scriptscriptstyle\mathrm{ang}}}
\newcommand{\Sang}{\Sigma_{\scriptscriptstyle\mathrm{Sq}}}
\newcommand{\RCorn}{\Sigma_{\scriptscriptstyle\mathrm{Corn,R}}}
\newcommand{\LCorn}{\Sigma_{\scriptscriptstyle\mathrm{Corn,L}}}
\newcommand{\NT}{\mathrm{ST}}

\newcommand{\Mrt}{m_{\scriptscriptstyle\mathrm{R}}}
\newcommand{\Mlt}{m_{\scriptscriptstyle\mathrm{L}}}

\newcommand{\F}{\mathcal{F}}

\newcommand{\eq}[1]{\begin{equation}{#1}\end{equation}}
\newcommand{\mlt}[1]{\begin{multline}{#1}\end{multline}}
\newcommand{\alg}[1]{\begin{align}{#1}\end{align}}

\newcommand{\set}[2]{\{{#1}\mid{#2}\}}
\newcommand{\Set}[2]{\Big\{{#1}\,\Big|\;{#2}\Big\}}

\newcommand{\SSet}[2]{\Bigg\{{#1}\,\Bigg|\;{#2}\Bigg\}}

\newcommand{\Eeqref}[1]{\stackrel{\scriptscriptstyle{\eqref{#1}}}{=}}
\newcommand{\Leqref}[1]{\stackrel{\scriptscriptstyle{\eqref{#1}}}{\leq}}

\newcommand{\Eref}[1]{\stackrel{#1}{=}}

\newcommand{\BG}{\mathfrak{B}}

\newcommand{\E}{\mathbb{E}}
\newcommand{\R}{\mathbb{R}}
\DeclareMathOperator{\cl}{cl}

\newcommand{\UB}{\mathbb{U}}
\newcommand{\VB}{\mathbb{V}}
\newcommand{\VG}{\mathfrak{V}}
\newcommand{\B}{\mathbb{B}}

\newcommand{\Dr}{D_{\mathrm{R}}}
\newcommand{\Dl}{D_{\mathrm{L}}}

\newcommand{\SFir}{\Sigma_{\mathrm{hrb}}}

\newcommand{\Concave}{\mathfrak{C}}
\newcommand{\Diagonal}{\mathfrak{D}}

\newcommand{\Strip}{\Sigma}

\newcommand{\bz}{\tilde{z}}
\newcommand{\by}{\tilde{y}}

\newcommand{\RTroll}{\Omega_{\mathrm{tr},\scriptscriptstyle\mathrm{R}}}
\newcommand{\LTroll}{\Omega_{\mathrm{tr},\scriptscriptstyle\mathrm{L}}}

\textwidth=16cm
\oddsidemargin=0pt
\topmargin=0pt

\newtheorem{Le}{Lemma}[subsection]
\newtheorem{Def}[Le]{Definition}
\newtheorem{St}[Le]{Proposition}
\newtheorem{Th}[Le]{Theorem}
\newtheorem{Cor}[Le]{Corollary}
\newtheorem{Rem}[Le]{Remark}
\newtheorem{Fact}[Le]{Fact}
\newtheorem{Conj}[Le]{Conjecture}
\newtheorem{Cond}[Le]{Condition}
\newtheorem{Que}[Le]{Question}
\numberwithin{equation}{section}

\numberwithin{equation}{subsection}

\begin{document}
\author{Dmitriy Stolyarov, Vasily Vasyunin, Pavel Zatitskii}
\title{Martingale transforms of bounded random variables {and indicator functions of events}\thanks{Supported by the Russian Science Foundation grant no 19-71-10023.}}

\maketitle
\begin{abstract}
We provide sharp estimates for the distribution function of a martingale transform of the {indicator} function of an event. They are formulated in terms of Burkholder functions, which are reduced to the already known Bellman functions for extremal problems on~$\BMO$. {The reduction implicitly uses an unexpected phenomenon of automatic concavity for those Bellman functions: their concavity in some directions implies concavity with respect to other directions. A similar question for a martingale transform of a bounded random variable is also considered.}
\end{abstract}

\section{Introduction}\label{S1}
\subsection{Stein--Weiss formula and martingale transforms}\label{s11}
The classical Stein--Weiss formula from~\cite{SteinWeiss1959} says that the distribution function of the Hilbert transform of the {indicator} function of a set depends on its Lebesgue measure only. More precisely,
\eq{
\Big|\Set{x\in \R}{\big|{\rm H}[\one_E](x)\big| \geq t}\Big| = \frac{2|E|}{\sinh \pi t}.
}
The absolute value of a set means its Lebesgue measure. For the case where~$E$ is a finite union of intervals, this result is, in fact, equivalent to Boole's identity for rational functions from~\cite{Boole1857}; see~\cite{CLM2010} for explanations. Results in Harmonic Analysis quite often have counterparts in Probability Theory. Our aim in this paper is to try to find probabilistic analogs of the Stein--Weiss formula. A natural choice would be to replace the Hilbert transform with a suitable martingale transform.

Let~$(S,\mathcal{F},P)$ be the standard atomless probability space. Let~$\{\F_n\}_n$ be a filtration, i.\,e., an increasing sequence of~$\sigma$-fields that generate~$\F$. For simplicity, we assume that each~$\F_n$ is generated by at most countable number of atoms and that~$\F_0$ is the trivial~$\sigma$-field. Let~$\varphi = \{\varphi_n\}_n$ be a martingale adapted to~$\{\F_n\}_n$, let~$\psi = \{\psi_n\}_n$ be its martingale transform by a bounded predictable sequence~$\{\alpha_n\}_n$; we assume~$\|\alpha_n\|_{L_\infty}\leq 1$ for any~$n\in \N\cup\{0\}$. The predictability of the transforming sequence means the variable~$\alpha_n$ is~$\F_{n-1}$-measurable. The transformation rule reads as~$d\psi_n = \alpha_n d\varphi_n$. Here
\eq{
d\varphi_n = \varphi_{n} - \varphi_{n-1} \quad \text{and}\quad d\psi_n = \psi_{n} - \psi_{n-1}
}
are the martingale difference sequences. By the general martingale theory (for which we broadly refer to Chapter~VII in~\cite{Shiryaev2019}),~$\psi$ is also a martingale and the mapping~$\varphi \mapsto \psi$ is continuous in~$L_p$ for~$p \in (1,\infty)$. If~$\varphi$ is a uniformly bounded martingale, by Doob's theorem it has the limit value~$\varphi_\infty$. {By} the above,~$\psi$ also has the limit value~$\psi_\infty \in L_p$,~$p < \infty$. 

Let~$f$ be a fixed function. Later we will impose smoothness and regularity requirements on this function, for now we assume it is `good'. We address two questions:

\medskip

\centerline{\bf Question 1: What are the sharp estimates of~$\E f(\psi_\infty)$}

\centerline{\bf  provided~$\varphi$ is a uniformly bounded martingale?}

\medskip

\centerline{\bf Question 2: What are the sharp estimates of~$\E f(\psi_\infty)$}

\centerline{\bf  provided~$\varphi_\infty$ is {the indicator of} an event?}

\medskip

These questions need clarification. The random variable~$\psi_\infty$ is defined modulo the constant~$\psi_0$. Either we set~$\psi_0=0$, or need to track the dependence of our answer on this constant.

We will provide a complete answer to the second question, thus obtaining a probabilistic version of the Stein--Weiss formula. To do this, we will need to restrict the class of martingale transforms we will be working with. For the second question, we will consider the sequences~$\{\alpha_n\}_n$ such that the random variables~$\alpha_n$ attain values~$\pm 1$ only. This will give the problem sufficient `rigidity' that enables a certain analog of Boole's formula. {In a sense,~$\{\alpha_n\}_n$ is the probabilistic substitute for a Fourier multiplier; the sequences with~$\alpha_n = \pm 1$ resemble the Hilbert transform, which is the Fourier multiplier with the symbol~$i\one_{(0,\infty)} - i\one_{(-\infty,0)}$.} It will be also more convenient to work with the case where~$\varphi_\infty$ attains the values~$1$ or~$-1$ rather than~$0$ and~$1$. The two cases are equivalent at the cost of replacing~$f$ by the function~$t\mapsto f((t+1)/2)$. As for the first question, we will provide only a partial answer.

It will be also convenient to work with simple martingales, i.\,e., those that do not develop after some time and attain a finite number of values; with this assumption, we may also think that {each}~$\F_n$ {is} generated by a finite number of sets. If we obtain a bound for~$\E f(\psi_\infty)$ for simple martingales, then the same bound may be extended to the case of an arbitrary bounded martingale~$\varphi$ by a limit argument. From now on, we assume~$\varphi$ is a simple martingale.

{We consider Theorems~\ref{TheoremForSubordination} and~\ref{TheoremForDiagonal} as the main results of the paper. The construction of the function~$\VB$ and Fact~\ref{Auto} also seem important to us, though we do not see direct applications now. 

We wish to thank Paata Ivanisvili and Alexander Logunov with whom we discussed the  topics around the present paper long time ago. We are grateful to Nicolai Krylov and Xu-Jia Wang for providing references. We also wish to thank Ilya Zlotnikov for reading the manuscript.
}

\subsection{Burkholder's functions}\label{s12}
As we have already noted, the answer to the questions should depend on~$\psi_0$. It is also convenient to fix~$\varphi_0$ and introduce the Burkholder functions
\alg{
\label{UDef}\UB(x_1,x_2) &= \sup\Set{\E f(\psi_\infty)}{\psi_0 = x_1,\ \varphi_0 = x_2, \ \|\varphi_{\infty}\|_{L_\infty} \leq 1};\\
\label{VDef}\VB(x_1,x_2) &= \sup\Set{\E f(\psi_\infty)}{\psi_0 = x_1,\ \varphi_0 = x_2,\ \forall n\quad |\alpha_n| = 1,\ \ |\varphi_{\infty}| = 1\ \text{a.\,s.}}.
}
The two main questions may be then rephrased as: {\bf are there good analytic expressions for~$\UB$ and~$\VB$?} It was first observed by Burkholder in~\cite{Burkholder1984} that functions of such type  are solutions to certain boundary value problems; this information gives hope that~$\UB$ and~$\VB$ may be expressed analytically. The origins of the method may be traced back to the classical moment method (see~\cite{Kemperman1973} and~\cite{KN1977} for {this method}).

The functions~$\UB$ and~$\VB$ are defined on the entire plane~$\R^2$. The points~$x = (x_1,x_2)$ with~$|x_2| > 1$ are uninteresting: if~$|x_2| > 1$, then there are no~$\varphi$ such that~$\varphi_0 = x_2$ and~$\|\varphi_\infty\|_{L_\infty} \leq 1$. Therefore,
\eq{
\UB(x) = \VB(x) = -\infty,\qquad |x_2| > 1.
}
From now on we disregard these void cases and treat~$\UB$ and~$\VB$ as functions on
\eq{
\Strip = \Set{x\in \R^2}{|x_2| \leq 1}.
}
We may always plug constant martingales~$\varphi = \varphi_0=x_2$ and~$\psi = \psi_0=x_1$ into the definition of~$\UB$. This leads us to the obstacle condition
\eq{
\UB(x) \geq f(x_1),\qquad x\in \Sigma.
}
The function~$\VB$, in its turn, satisfies the boundary conditions
\eq{
\VB(x_1, \pm 1) = f(x_1),\qquad x_1 \in \R,
}
since if~$\varphi_0 = \pm 1$ and~$\|\varphi_\infty\|_{L_\infty} \leq 1$, then both~$\varphi$ and~$\psi$ are constant martingales:~$\varphi_\infty = \varphi_0=\pm 1$ and~$\psi_\infty = \psi_0=x_1$.

\begin{Def}
Let~$\beta \in \R$. We say that a function~$G \colon \Strip \to \R$ is concave along the direction~$(\beta, 1),$ provided the function
\eq{
t\mapsto G(x_1 + \beta t, x_2 + t),
}
is concave on its domain for any~$x\in \Strip$. We say that a function~$G \colon \Strip \to \R$ is diagonally concave if it is concave along the directions~$(1,1)$ and~$(-1,1)$.
\end{Def}
\begin{Def}
Let~$f\colon \R\to \R$ be a function. Define the classes of functions
\alg{
\Concave[f] &= \Set{G\colon \Sigma \to \R}{G \ \text{is concave along $(\beta,1),$~$\beta \in [-1,1],$ and}\ G(x) \geq f(x_1),\ x\in \Sigma};\\
\label{DiagonalClass}\Diagonal[f] &= \Set{G\colon \Sigma \to \R}{G \ \text{is diagonally concave and}\ G(x_1,\pm 1) \geq f(x_1)\ \text{for all}\ x_1\in \R}.
}
\end{Def} 
We note that if~$\{G_\alpha\}_\alpha$ is a family of functions on~$\Sigma$ that are concave along some direction, then the function~$G = \inf_\alpha G_\alpha$ is also concave along the same direction, provided it attains finite values. This allows to define the pointwise minimal functions in the families~$\Concave[f]$ and~$\Diagonal[f]$. {The two theorems below may be traced to Burkholder.}
\begin{Th}
For any function~$f\colon \R\to \R,$
\eq{
\UB(x) = \inf\Set{G(x)}{G\in \Concave[f]},\qquad x\in \Sigma.
}
\end{Th}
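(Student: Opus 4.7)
The proof splits into two inequalities: $\UB(x) \leq G(x)$ for every $G \in \Concave[f]$, and $\UB \in \Concave[f]$. Combining them with the observation (already noted in the excerpt) that $\Concave[f]$ is closed under pointwise infima yields equality with the pointwise infimum.

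\textbf{Upper bound.} Fix $G \in \Concave[f]$, a simple martingale $\varphi$ with $\varphi_0 = x_2$ and $\|\varphi_\infty\|_{L_\infty}\leq 1$, and its transform $\psi$ by a predictable sequence $\{\alpha_n\}$ with $\|\alpha_n\|_{L_\infty}\leq 1$ and $\psi_0 = x_1$. The plan is to show that $\{G(\psi_n,\varphi_n)\}$ is a supermartingale. Conditioning on $\F_{n-1}$, and using that $\alpha_n$ is $\F_{n-1}$-measurable with $d\psi_n = \alpha_n\, d\varphi_n$ and $|\alpha_n|\leq 1$, Jensen's inequality applied along the (random) direction $(\alpha_n,1)$, in which $G$ is concave by assumption, gives
\[
\E[G(\psi_n,\varphi_n)\mid \F_{n-1}]\leq G\bigl(\psi_{n-1}+\alpha_n\E[d\varphi_n\mid\F_{n-1}],\,\varphi_{n-1}+\E[d\varphi_n\mid\F_{n-1}]\bigr) = G(\psi_{n-1},\varphi_{n-1}),
\]
since $\E[d\varphi_n\mid\F_{n-1}]=0$. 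Iterating, and using that $\varphi$ is simple so the martingale stabilizes after finitely many steps at $(\psi_\infty,\varphi_\infty)\in\Sigma$, the obstacle inequality $G(y)\geq f(y_1)$ on $\Sigma$ gives $\E f(\psi_\infty)\leq \E G(\psi_\infty,\varphi_\infty)\leq G(x_1,x_2)$. Taking the supremum over admissible transforms yields $\UB(x)\leq G(x)$.

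\textbf{Lower bound (membership in $\Concave[f]$).} The obstacle condition $\UB(x)\geq f(x_1)$ was already observed by plugging in constant martingales. For concavity along $(\beta,1)$ with $\beta\in[-1,1]$, fix $x\in\Sigma$, reals $t_1,t_2$ and $\lambda\in(0,1)$ with all three points $(x_1+\beta t_i,x_2+t_i)$ and $(x_1+\beta y,x_2+y)$, $y:=\lambda t_1+(1-\lambda)t_2$, lying in $\Sigma$. Starting from $(x_1+\beta y,x_2+y)$, construct a martingale whose first increment takes values $t_1-y$ with probability $\lambda$ and $t_2-y$ with probability $1-\lambda$ (the mean is $0$), and set $\alpha_1:=\beta$, which is admissible because $|\beta|\leq 1$. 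Then $\varphi$ lands on $x_2+t_i$ and $\psi$ on $x_1+\beta t_i$ with the respective probabilities. Conditionally on each branch, continue with a simple admissible transform whose expected payoff is within $\eps$ of $\UB(x_1+\beta t_i,x_2+t_i)$. The concatenated object is again an admissible transform of a simple bounded martingale, and its expected payoff is at least $\lambda\UB(x_1+\beta t_1,x_2+t_1)+(1-\lambda)\UB(x_1+\beta t_2,x_2+t_2)-\eps$; letting $\eps\to 0$ gives concavity along $(\beta,1)$.

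\textbf{Main obstacle.} The Jensen step is routine; the delicate part is the concatenation argument, where one must realize the glued process as an honest martingale transform by a predictable $[-1,1]$-valued sequence on a single probability space. The standing assumption that $(S,\F,P)$ is atomless and that we may enrich the filtration makes this a bookkeeping exercise rather than a genuine difficulty, but care is needed when the optimizing martingales on the two branches are defined on separate copies of $S$. A minor point is that $\UB$ could a priori equal $+\infty$; one then replaces the `within $\eps$' construction by a martingale exceeding an arbitrary threshold~$N$, which does not affect the argument.
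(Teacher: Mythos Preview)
The paper does not give its own proof of this theorem; it simply refers the reader to Chapter~2 of Os\k{e}kowski's monograph~\cite{Osekowski2012}. Your argument is exactly the standard Burkholder-method proof one finds there: the supermartingale/Jensen step along the random direction~$(\alpha_n,1)$ for the upper bound, and the splitting/concatenation step for membership of~$\UB$ in~$\Concave[f]$. The proof is correct, and there is nothing to compare against in the paper itself.
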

\begin{Th}\label{BurkholderV}
For any function~$f\colon \R\to \R,$
\eq{
\VB(x) = \inf\Set{G(x)}{G\in \Diagonal[f]},\qquad x\in \Sigma.
}
\end{Th}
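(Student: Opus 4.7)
The plan is a standard Burkholder-type double inequality. Write $W(x) := \inf\{G(x) \mid G \in \Diagonal[f]\}$. The strategy is to prove $\VB \leq W$ by a supermartingale test, and $\VB \geq W$ by verifying that $\VB$ itself lies in $\Diagonal[f]$.

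For the first inequality, fix $G \in \Diagonal[f]$ and an admissible pair $(\varphi, \psi)$ with $\psi_0 = x_1$, $\varphi_0 = x_2$, $|\alpha_n| \equiv 1$, and $|\varphi_\infty| = 1$ almost surely. The central observation is that $\alpha_n$ is $\F_{n-1}$-measurable and takes only the values $\pm 1$, so on each atom of $\F_{n-1}$ the increment $(d\psi_n, d\varphi_n) = (\alpha_n, 1)\, d\varphi_n$ is parallel to one of the two diagonals $(\pm 1, 1)$. An atomwise Jensen inequality, using the diagonal concavity of $G$ and the martingale identity $\E[d\varphi_n \mid \F_{n-1}] = 0$, yields
$$\E\bigl[G(\psi_n, \varphi_n) \,\big|\, \F_{n-1}\bigr] \leq G(\psi_{n-1}, \varphi_{n-1}),$$
so $\{G(\psi_n, \varphi_n)\}_n$ is a supermartingale. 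Since $\varphi$ is simple it stabilizes after some $N$, and combining the boundary condition $G(\fdot, \pm 1) \geq f$ with $|\varphi_\infty| = 1$ a.s. gives $\E f(\psi_\infty) \leq \E G(\psi_N, \varphi_N) \leq G(x_1, x_2)$. Taking the supremum over admissible pairs and the infimum over $G$ yields $\VB(x) \leq W(x)$.

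For the reverse inequality, I verify that $\VB$ belongs to $\Diagonal[f]$. The boundary condition $\VB(x_1, \pm 1) \geq f(x_1)$ is immediate from the constant admissible pair. To prove concavity along $(\beta, 1)$ with $\beta \in \{\pm 1\}$, fix $x \in \Sigma$ and $t_- < 0 < t_+$ with $\lambda t_+ + (1-\lambda) t_- = 0$, $\lambda \in (0,1)$, keeping $x_2 + t_\pm \in [-1, 1]$. Choose near-optimal admissible pairs $(\varphi^\pm, \psi^\pm)$ starting at $(x_1 + \beta t_\pm, x_2 + t_\pm)$, and splice them: prepend a single martingale step $d\varphi_1 \in \{t_+, t_-\}$ taken with probabilities $\lambda, 1-\lambda$ and transforming coefficient $\alpha_1 = \beta$, then continue as $(\varphi^\pm, \psi^\pm)$ on the respective halves of the probability space. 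The atomless hypothesis on $(S, \F, P)$ guarantees that such a concatenation is realizable. The resulting pair is admissible for the problem at $(x_1, x_2)$ and delivers
$$\E f(\psi_\infty) = \lambda\, \E f(\psi_\infty^+) + (1-\lambda)\, \E f(\psi_\infty^-),$$
which can be made arbitrarily close to $\lambda\, \VB(x + t_+(\beta, 1)) + (1 - \lambda)\, \VB(x + t_-(\beta, 1))$. Hence $\VB(x)$ dominates that combination, which is the desired diagonal concavity.

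The main subtlety I anticipate lies in the splicing step: carefully setting up the sub-$\sigma$-algebras so that the pre-existing martingales $(\varphi^\pm, \psi^\pm)$ can be realized as continuations on disjoint events of probabilities $\lambda, 1-\lambda$ while preserving the transforming sequence with $|\alpha_n| \equiv 1$. A minor side issue is the possibility that $\VB$ or some $G$ takes the value $+\infty$, which is handled by truncating $f$ from above and passing to the limit.
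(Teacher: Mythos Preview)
Your argument is correct and is precisely the standard Burkholder approach. The paper does not actually give a proof of this theorem; it attributes the result to Burkholder and refers the reader to Chapter~2 of Os\k{e}kowski's monograph~\cite{Osekowski2012}, where the same two-step scheme (supermartingale majorization for $\VB\leq W$, splicing/concatenation to show $\VB\in\Diagonal[f]$ for $\VB\geq W$) is carried out in detail.
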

Note that by the minimality principle above,~$\UB \in \Concave[f]$ and~$\VB \in \Diagonal[f]$.
For the proofs of the two theorems above, see Chapter~$2$ in~\cite{Osekowski2012}.

\subsection{Bellman functions for $\BMO$}\label{s13}
Define the~$\BMO$ norm of a {summable} function~$\zeta\colon [0,1]\to \R$ by the rule
\eq{\label{BMONorm}
\|\zeta\|_{\BMO}^2 = \sup\Set{\av{|\zeta - \av{\zeta}{I}|^2}{I}}{ I\ \text{is a subinterval of}\ [0,1]}.
}
Here~$\av{\zeta}{E}$ means the average of a function~$\zeta$ over a set~$E\subset \R$, that is,~$|E|^{-1}\int_E \zeta(t)\,dt$
. 
We refer the reader to {Chapter IV} in~\cite{Stein1993} for more information on the space~$\BMO$. We only mention that usually the seminorm~\eqref{BMONorm} is defined via~$L_1$ norm instead of the~$L_2$ norm. The two definitions are equivalent, they define the same space of functions. The sharp constants in the corresponding inequalities were investigated in~\cite{SlavinVasyunin2012}.

Define the Bellman function
\eq{\label{BellmanFunctionInTheStrip}
\B_\eps(y_1,y_2) = \sup\Set{\int_0^1 f(\zeta(t))\,dt}{\av{\zeta}{[0,1]} = y_1,\ \av{\zeta^2}{[0,1]} = y_2,\ \|\zeta\|_{\BMO} \leq \eps},\qquad y\in \R^2,
}
where~$\eps > 0$ is a parameter. Though we will only need the case~$\eps = 1$, we prefer to work with arbitrary~$\eps$ for a while.

Similar to the Burkholder functions above, the natural domain for~$\B_\eps$ is the parabolic strip
\eq{\label{ParabolicStrip}
\Omega_\eps = \Set{y\in\R^2}{y_1^2 \leq y_2 \leq y_1^2+\eps^2}.
}
The function~$\B_\eps$ satisfies the boundary condition
\eq{\label{BCForBMO}
\B_\eps(y_1,y_1^2) = f(y_1),\qquad y_1 \in \R,
}
on the lower boundary of the strip (we call this part of the boundary the \emph{fixed} boundary; the other part of the boundary,~$y_2 = y_1^2 + \eps^2$\!, is called the \emph{free} boundary\footnote{{This has nothing common with the free boundary problems. This part of the boundary is called free since we do not define the boundary data on it.}}). 
The function~$\B_\eps$ also solves a certain boundary value problem. Let~$\omega \subset \R^d$ be an arbitrary set. We say that a function~$G\colon \omega \to \R$ is locally concave, provided its restriction to any convex set~$C\subset \omega$ is concave{; sometimes locally concave functions are called concave functions on non-convex domains.} We define
\eq{\label{SetLambda}
\Lambda[f,\eps] = \Set{G \colon \Omega_\eps\to \R}{G\ \text{is locally concave and }\ G(y_1,y_1^2) = f(y_1),\ y_1\in \R}.
} 
\begin{Th}[Main Theorem in \cite{StolyarovZatitskiy2016}]\label{SZTheorem}
Assume~$f$ is locally bounded and uniformly bounded from below. Then,
\eq{
\B_\eps(y) = \inf\Set{G(y)}{G\in \Lambda[f,\eps]},\qquad y\in \Omega_\eps.
}
\end{Th}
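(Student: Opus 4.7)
I will establish the two inequalities separately: (a) every $G \in \Lambda[f,\eps]$ dominates $\B_\eps$ pointwise, which yields $\B_\eps \leq \inf_G G$; and (b) $\B_\eps$ itself belongs to $\Lambda[f,\eps]$, which together with the minimality principle yields the reverse $\B_\eps \geq \inf_G G$. The first direction is a Bellman-induction over a hierarchical decomposition of $[0,1]$; the second reduces to checking that $\B_\eps$ is locally concave on the non-convex domain $\Omega_\eps$, and is where the main work lies.

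\textbf{Direction (a).} Fix $G \in \Lambda[f,\eps]$ and a test function $\zeta$ for $\B_\eps(y)$. To each subinterval $J \subset [0,1]$ attach the Bellman point $y(J) = (\av{\zeta}{J},\av{\zeta^2}{J}) \in \Omega_\eps$. Split $[0,1]$ recursively into pieces $J_+ \cup J_-$ with the split point chosen so that the chord $[y(J_+), y(J_-)]$ stays inside $\Omega_\eps$; such a split exists because the chord depends continuously on the split point and one can always pick a parameter at which the chord avoids the free boundary (this is the non-trivial part of the ``easy'' direction, reflecting that $\Omega_\eps$ is non-convex). Local concavity of $G$ along that chord then yields $G(y(J)) \geq (|J_+|/|J|)\,G(y(J_+)) + (|J_-|/|J|)\,G(y(J_-))$. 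Iterating to leaves of vanishingly small measure -- on which $y(L)$ approaches the fixed boundary by Lebesgue differentiation -- and using the boundary condition $G(y_1,y_1^2) = f(y_1)$, one obtains $G(y) \geq \int_0^1 f(\zeta)\,dt$; taking supremum over $\zeta$ completes the inequality.

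\textbf{Direction (b).} The boundary condition for $\B_\eps$ is forced: $\av{\zeta}{[0,1]}=y_1$ and $\av{\zeta^2}{[0,1]}=y_1^2$ imply $\zeta \equiv y_1$ a.e. For local concavity, fix $y^\pm \in \Omega_\eps$ whose joining segment lies inside $\Omega_\eps$, set $y = \alpha y^+ + (1-\alpha)y^-$, pick near-extremal $\zeta^\pm$ for $y^\pm$ (within $\delta$ of the sup), and form the concatenation
\[
\zeta(t) = \zeta^+(t/\alpha)\,\one_{[0,\alpha]}(t) + \zeta^-\bigl((t-\alpha)/(1-\alpha)\bigr)\,\one_{[\alpha,1]}(t).
\]
This function has averages exactly $y$ and realizes $\int f(\zeta) \geq \alpha \B_\eps(y^+) + (1-\alpha)\B_\eps(y^-) - \delta$. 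The main obstacle is to verify that $\|\zeta\|_{\BMO} \leq \eps$. For subintervals lying on one side of $\alpha$ this is inherited from $\zeta^\pm$; for a subinterval $J$ straddling $\alpha$ with left fraction $\beta$, the variance decomposes as $\beta V_\ell + (1-\beta)V_r + \beta(1-\beta)(m_\ell - m_r)^2$, where $V_{\ell,r}$, $m_{\ell,r}$ are the variances and means on the two halves of $J$. The parallax term is precisely what the segment-in-$\Omega_\eps$ hypothesis governs, since that hypothesis reads
\[
\tau(y^+_2-(y^+_1)^2) + (1-\tau)(y^-_2-(y^-_1)^2) + \tau(1-\tau)(y^+_1-y^-_1)^2 \leq \eps^2\qquad \forall\,\tau \in [0,1].
\]
To match the straddling-subinterval decomposition with this inequality at the correct $\tau$, I would pre-regularize $\zeta^\pm$ to be almost constant in thin collars around the gluing endpoints, so that on narrow straddling windows $m_\ell, m_r$ are close to $y^+_1, y^-_1$ and the bound reduces to the hypothesis, while on wide straddling windows the decomposition reduces directly using $V_{\ell,r}\leq\eps^2$. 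This collar construction is the principal technical difficulty and is the analytic heart of the argument in \cite{StolyarovZatitskiy2016}.
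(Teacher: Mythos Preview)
The present paper does not prove this theorem at all; it is quoted as the main result of~\cite{StolyarovZatitskiy2016}. So there is no ``paper's own proof'' to compare against here, only the argument of the cited reference. That said, the route taken in~\cite{StolyarovZatitskiy2016} is \emph{not} the one you describe, and your direction~(b) contains a genuine gap.

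Your direction~(a) is essentially the standard Bellman induction and is fine in outline; the only substantive step is the existence of a split of each interval for which the chord $[y(J_+),y(J_-)]$ stays in~$\Omega_\eps$, and you correctly flag this.

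Your direction~(b), however, mischaracterizes the cited paper and does not close. You assert that a ``collar construction'' is ``the analytic heart of the argument in~\cite{StolyarovZatitskiy2016}''. It is not: as Sections~\ref{s21}--\ref{s22} of the present paper make explicit, the machinery of~\cite{StolyarovZatitskiy2016} is the $\omega$-martingale representation~\eqref{OmegaMartFormula}, introduced precisely to \emph{avoid} concatenating $\BMO$ functions. The direct concatenation route you propose runs into the following unresolved issues. First, replacing $\zeta^\pm$ by functions that are constant in collars shifts their Bellman points away from~$y^\pm$, so the glued function no longer has Bellman point~$y$; you do not explain how to compensate without damaging the $\BMO$ bound elsewhere. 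Second, your narrow/wide dichotomy does not cover the dangerous intermediate regime: for a straddling interval whose left piece reaches past the collar but not to the left endpoint, the left mean $m_\ell$ is the average of~$\zeta^+$ over an arbitrary right subinterval and is controlled neither by the collar value nor by~$y^+_1$, while $V_\ell$ can still be as large as~$\eps^2$; there is then no identification of the resulting $\beta V_\ell+(1-\beta)V_r+\beta(1-\beta)(m_\ell-m_r)^2$ with the segment inequality at any~$\tau$. This is exactly the obstruction that drove~\cite{StolyarovZatitskiy2016} to martingales: one shows $\B_\eps$ equals the supremum over $\Omega_\eps$-martingales (every admissible~$\zeta$ produces such a martingale via the assignment $J\mapsto(\av{\zeta}{J},\av{\zeta^2}{J})$), and separately that this supremum equals the pointwise minimal locally concave function, since $\omega$-martingales by definition split along chords in~$\Omega_\eps$ and hence interact directly with local concavity. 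No $\BMO$ gluing is required.
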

Theorem~\ref{SZTheorem} says nothing about analytic expressions for~$\B_\eps$. It supports the hope that such formulas might exist. The paper~\cite{ISVZ2018} provides an algorithm for finding such an expression. We will survey it in the next subsection. A good example explaining what we mean by an `expression' is given by the case~$f(t) = e^{\lambda t}$,~$\lambda \in (0,1)$, where
\eq{
\B_1(y) = \Big[\frac{\lambda}{1-\lambda}\Big(1 - \sqrt{y_1^2 - y_2 + 1}\Big) + 1\Big]\;e^{\lambda (y_1 - 1 + \sqrt{y_1^2 - y_2 + 1})}.
}
This case was considered in~\cite{SlavinVasyunin2011}. See Section~\ref{s34} below for more details. Now we are ready to formulate our main results. {Before that we simplify our notation: we will write~$\B$ instead of~$\B_1$; similar rule applies to~$\Omega_1$ and~$\Omega$.}

\begin{Th}\label{TheoremForSubordination}
Let~$f$ be locally bounded and uniformly bounded from below. Then\textup, 
\eq{\label{FirstMajorization}
\UB(x)\ \leq\!\! \sup\limits_{0\leq \delta \leq 1-x_2^2}\!\! \B(x_1,x_1^2+\delta),
}
for any~$x\in \Sigma$.
\end{Th}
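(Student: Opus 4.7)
The plan is to apply Burkholder's characterization $\UB(x) = \inf\set{G(x)}{G \in \Concave[f]}$ and exhibit a single function $G \in \Concave[f]$ matching the right-hand side of~\eqref{FirstMajorization}. The natural candidate is
\[
G(x_1,x_2) \;:=\; \sup_{0\leq \delta \leq 1-x_2^2}\B(x_1,x_1^2+\delta),\qquad (x_1,x_2)\in\Sigma.
\]
The obstacle condition $G(x)\geq f(x_1)$ is immediate: take $\delta=0$ and invoke the fixed-boundary identity~\eqref{BCForBMO}, $\B(x_1,x_1^2)=f(x_1)$. The substantial step is to verify concavity of $G$ along every direction $(\beta,1)$ with $\beta\in[-1,1]$.

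The key geometric observation is the following. For every $\beta\in[-1,1]$ the curve
\[
\gamma(t) \;=\; \bigl(x_1+\beta t,\;(x_1+\beta t)^2 + 1 - (x_2+t)^2\bigr) \;\in\; \Omega
\]
is linear in its first coordinate and quadratic in its second, with leading coefficient $\beta^2 - 1 \leq 0$ precisely when $|\beta|\leq 1$. This compatibility suggests an \emph{automatic concavity} lemma: local concavity of $\B$ on $\Omega$, combined with monotonicity of $\B$ in $y_2$ (or with the non-decreasing envelope of $\B$ in $y_2$, which is exactly what the supremum over~$\delta$ produces), forces $\hat\B(x_1,x_2) := \B(x_1,x_1^2+1-x_2^2)$ to be concave along $(\beta,1)$ for every $|\beta|\leq 1$. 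The motivation, if one pretends $\B\in C^2$ in the interior, is the chain-rule identity
\[
\frac{d^2}{dt^2}\,\hat\B(x_1+\beta t,x_2+t)\Big|_{t=0} \;=\; \gamma'(0)^{\top}\bigl(\nabla^2\B\bigr)\gamma'(0) + (2\beta^2 - 2)\,\partial_{y_2}\B,
\]
in which the first summand is non-positive by local concavity and the second is non-positive for $|\beta|\leq 1$ under monotonicity.

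To turn this into a proof I would (i)~approximate $\B$ from above by smooth locally concave functions preserving the boundary data~\eqref{BCForBMO}, whose existence is suggested by Theorem~\ref{SZTheorem}; (ii)~apply the computation above to each approximant and pass to the limit; and (iii)~handle the supremum over~$\delta$ by replacing~$\B$ with its monotone-in-$y_2$ envelope from the outset, which does not exceed the right-hand side of~\eqref{FirstMajorization} and still inherits local concavity along vertical segments of~$\Omega$.

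I expect the principal obstacle to be the automatic concavity lemma itself. Its content---that the flat directions~$(\beta,1)$ on~$\Sigma$ lift to curves in~$\Omega$ whose curvature has the favorable sign exactly when $|\beta|\leq 1$---is the ``unexpected phenomenon'' mentioned in the abstract. Without the algebraic cancellation $\gamma''(t)_{y_2} = 2(\beta^2 - 1)\leq 0$ the argument would break and the admissible range of~$\beta$ would collapse, so the structural fit between the domains~$\Sigma$ and~$\Omega$ is not incidental: it is exactly what allows the Burkholder function~$\UB$ to be controlled by the BMO Bellman function~$\B$.
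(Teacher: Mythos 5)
Your proposal is correct in substance but takes a genuinely different route from the paper. The paper's proof is probabilistic: Theorem~\ref{OmegaMartTh} (via Lemma~\ref{ShrinkingLemma}) shows that $M_n = (\psi_n, \E(\psi_\infty^2\mid\F_n))$ is an $\Omega$-martingale, and the martingale representation of~$\B$ from~\cite{StolyarovZatitskiy2016} together with the bound $\E\psi_\infty^2 \leq x_1^2 + 1 - x_2^2$ delivers~\eqref{FirstMajorization}. You instead use the dual Burkholder characterization of~$\UB$ and exhibit an explicit competitor in $\Concave[f]$ manufactured from~$\B$. This is exactly the geometric mechanism sketched in Subsection~\ref{s71} for the extremal slopes $\beta = \pm 1$, where the map $T(x_1,x_2) = (x_1, x_1^2+1-x_2^2)$ sends diagonal segments to tangent lines; your curvature identity $\gamma_2''(t) = 2(\beta^2 - 1)$ extends it to all $|\beta|\leq 1$. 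The paper's argument is shorter once Lemma~\ref{ShrinkingLemma} is available; yours bypasses the $\Omega$-martingale machinery altogether and makes the structural fit between $\Sigma$ and $\Omega$ explicit.

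Two points should be tightened. First, in step~(iii) you assert only that the monotone envelope $\tilde\B(y_1,y_2) := \sup_{y_1^2 \leq s \leq y_2}\B(y_1,s)$ inherits concavity ``along vertical segments of~$\Omega$''; what the argument actually requires is that $\tilde\B$ be locally concave on all of~$\Omega$. This does hold: if $[y,y']\subset\Omega$ and $\sigma\in[y_1^2, y_2]$, $\sigma'\in[(y_1')^2, y_2']$ are (near-)maximizers, then the segment joining $(y_1,\sigma)$ and $(y_1',\sigma')$ also lies in~$\Omega$, so the concavity of $\B$ on that segment transfers to $\tilde\B$. Second, the smooth-approximation detour in step~(i) is avoidable: a direct convexity computation suffices. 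Setting $y_i = T(x_1+\beta t_i,\; x_2 + t_i)$, one checks that $[y_1,y_2]\subset\Omega$ and that the second coordinate of $T$ evaluated at the $\lambda$-interpolant of the arguments exceeds the $\lambda$-interpolant of the $y_{i,2}$'s by $\lambda(1-\lambda)(t_1-t_2)^2(1-\beta^2) \geq 0$; monotonicity of~$\tilde\B$ in~$y_2$ and local concavity of~$\tilde\B$ then give concavity of $\tilde\B\circ T$ along $(\beta,1)$ with no smoothness assumption on~$\B$ at all. With those two clarifications the proof is complete.
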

The theorem above might be thought of as corollary of the considerations of~\cite{StolyarovZatitskiy2016}. We do not know when this estimate is sharp.
{Theorem~$6.1.2$ in~\cite{ISVZ2018} says that~$\B$ is finite if~$f$ is measurable and
\eq{\label{MaximalSum}
\sum\limits_{k\in \Z}e^{-|k|}\!\! \sup\limits_{[k-2,k+2]}\!\! |f| \;<\; \infty.
}
Thus, we obtain a corollary of Theorem~\ref{TheoremForSubordination}.
\begin{Cor}\label{BoundaryBehavior}
Assume~$f$ is a measurable function such that the sum~\eqref{MaximalSum} is finite. Then\textup,~$\E f(\psi_\infty)$ is uniformly bounded when~$\psi$ is a martingale transform of~$\varphi$ and~$\|\varphi\|_{L_\infty} \leq 1$.
\end{Cor}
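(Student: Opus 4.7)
The plan is to combine Theorem~\ref{TheoremForSubordination} with the cited finiteness of~$\B$ to control~$\UB$. Fix~$x=(x_1,x_2)\in\Sigma$ and consider any martingale transform~$\psi$ of~$\varphi$ with~$\psi_0=x_1$, $\varphi_0=x_2$, and~$\|\varphi\|_{L_\infty}\le 1$. Directly from the definition~\eqref{UDef} of~$\UB$ one has~$\E f(\psi_\infty)\le \UB(x)$, so the claim reduces to showing~$\UB(x)<\infty$.

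Next, I would invoke the majorization~\eqref{FirstMajorization}, which gives
\eq{
\UB(x)\leq\sup\limits_{0\leq\delta\leq 1-x_2^2}\B(x_1,x_1^2+\delta).
}
Since~$0\le 1-x_2^2\le 1$, the vertical segment~$\set{(x_1,x_1^2+\delta)}{0\le\delta\le 1-x_2^2}$ lies inside the parabolic strip~$\Omega=\Omega_1$. By Theorem~$6.1.2$ of~\cite{ISVZ2018}, the summability assumption~\eqref{MaximalSum} forces~$\B$ to be finite at every point of~$\Omega$, in particular along this segment; note also that~\eqref{MaximalSum} renders~$f$ locally bounded, so the boundary value~$\B(x_1,x_1^2)=f(x_1)$ is a finite number.

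The remaining and only genuinely delicate step will be to promote pointwise finiteness of~$\B$ along the segment to a finite supremum. For this I would appeal to local concavity: by Theorem~\ref{SZTheorem},~$\B$ is a pointwise infimum over the class~$\Lambda[f,1]$ of locally concave majorants, hence is itself locally concave on~$\Omega$. Restricted to the convex segment above, the map~$\delta\mapsto\B(x_1,x_1^2+\delta)$ therefore becomes a real-valued concave function on the compact interval~$[0,1-x_2^2]$. Any such function is automatically bounded: the secant joining the endpoint values provides a finite lower bound, while a supporting affine functional at any interior point (whose slope is finite by concavity and finiteness) provides a finite upper bound. The supremum in the displayed inequality is thus finite, whence~$\UB(x)<\infty$, which is the desired uniform bound on~$\E f(\psi_\infty)$.
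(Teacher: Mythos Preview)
Your argument is correct and follows exactly the route the paper intends: the paper presents this corollary as an immediate consequence of Theorem~\ref{TheoremForSubordination} together with the cited finiteness result for~$\B$, without writing out any further details. Your additional step---passing from pointwise finiteness of~$\B$ on the vertical segment to finiteness of the supremum via local concavity---is a detail the paper leaves implicit, and your justification of it is sound.

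One small technicality worth noting (present equally in the paper's one-line derivation): Theorem~\ref{TheoremForSubordination} and Theorem~\ref{SZTheorem} are stated for~$f$ locally bounded and uniformly bounded from below, whereas the corollary assumes only~\eqref{MaximalSum}. Condition~\eqref{MaximalSum} does force~$\sup_{[k-2,k+2]}|f|<\infty$ for every~$k$, so local boundedness is automatic, but global boundedness from below is not. The cleanest way to close this is to apply the whole argument to~$|f|$, which is nonnegative and still satisfies~\eqref{MaximalSum}; one then obtains a uniform bound on~$\E|f(\psi_\infty)|$, hence on~$\E f(\psi_\infty)$. This simultaneously supplies the lower bound that your reduction to~$\UB(x)<\infty$ alone does not address.
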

The function~$f(t) = (1+t^2)^{-1}e^{|t|}$ fulfills~\eqref{MaximalSum} and the function~$f(t) = (1+|t|)^{-1}e^{|t|}$ does not. One may see that for the second function,~$\B = \infty$ (plug~$\zeta(t) = -\log t$ into~\eqref{BellmanFunctionInTheStrip}); Theorem~\ref{TheoremForDiagonal} below then says~$\VB$ and~$\UB$ are also infinite in this case).
}

The next theorem does not follow from abstract results of~\cite{StolyarovZatitskiy2016}. The full strength of the machinery from~\cite{ISVZ2018} will be needed for its proof.
\begin{Th}\label{TheoremForDiagonal}
Let~$f$ be  bounded from below and continuous. Then\textup,
\eq{\label{TheoremForDiagonalFormula}
\VB(x_1,0) = \B(x_1,x_1^2+1), \qquad x\in \R.
}
\end{Th}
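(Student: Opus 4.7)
The plan is to derive both inequalities from the change of variables
\[
\Phi\colon \Sigma \to \Omega, \qquad \Phi(x_1,x_2) \;=\; \bigl(x_1,\; x_1^2 + 1 - x_2^2\bigr),
\]
which sends~$\Sigma\cap\{x_2\geq 0\}$ bijectively onto~$\Omega$ (and similarly for the~$\{x_2\leq 0\}$ half). For the upper bound~$\VB(x_1,0)\leq \B(x_1,x_1^2+1)$, I would set~$G(x_1,x_2):=\B\circ\Phi(x_1,x_2)=\B(x_1,x_1^2+1-x_2^2)$ and verify that~$G\in\Diagonal[f]$, after which Theorem~\ref{BurkholderV} delivers the inequality. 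The boundary condition~$G(x_1,\pm 1)=\B(x_1,x_1^2)=f(x_1)$ is immediate from~\eqref{BCForBMO}. For the diagonal concavity, the key calculation
\[
\Phi(x_1+t,x_2+t) \;=\; \bigl(x_1+t,\; x_1^2-x_2^2+1 + 2(x_1-x_2)t\bigr)
\]
shows that the~$\Phi$-image of a diagonal segment is a \emph{straight} line in~$\Omega$ — the~$t^2$ terms cancel — and since~$y_2-y_1^2 = 1-(x_2+t)^2$, this line lies in~$\Omega$ precisely for~$|x_2+t|\leq 1$, exactly the range where the original segment stays in~$\Sigma$. The direction~$(-1,1)$ is analogous, and local concavity of~$\B$ on these lines does the rest.

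For the reverse inequality~$\VB(x_1,0)\geq \B(x_1,x_1^2+1)$ I would invert the picture via Theorem~\ref{SZTheorem}. First, $\VB$ is even in~$x_2$: negating~$\varphi$ simultaneously with the~$\alpha_n$ preserves admissibility in~\eqref{VDef}. Hence the formula
\[
\widehat\B(y_1,y_2) \;:=\; \VB\bigl(y_1,\,\sqrt{y_1^2+1-y_2}\bigr)
\]
defines an unambiguous function on~$\Omega$ satisfying~$\widehat\B(y_1,y_1^2) = \VB(y_1,1) = f(y_1)$. If~$\widehat\B$ turns out to be locally concave on~$\Omega$, it belongs to~$\Lambda[f,1]$, so Theorem~\ref{SZTheorem} yields~$\widehat\B\geq \B$, and evaluating at~$(x_1,x_1^2+1)$ closes the argument.

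The main obstacle is therefore the local concavity of~$\widehat\B$. Pushing the diagonal concavity of~$\VB$ through~$\Phi$ only gives concavity of~$\widehat\B$ along two specific slopes at each point, namely~$m = 2y_1 \pm 2\sqrt{y_1^2+1-y_2}$, whereas local concavity in~$\Omega$ demands concavity along \emph{every} slope. This gap is exactly the content of the automatic concavity phenomenon flagged in the introduction (Fact~\ref{Auto}): the minimality of~$\VB$ inside~$\Diagonal[f]$ silently upgrades its two-direction concavity to concavity along a strictly larger family of directions, precisely the family needed for~$\widehat\B$ to be locally concave on all of~$\Omega$. Proving this automatic concavity is where the full strength of the foliation machinery of~\cite{ISVZ2018} will be needed: it supplies the explicit description of the extremal leaves of~$\B$ (and hence of~$\VB$) from which the extra directions of concavity can be read off. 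An equivalent, more hands-on route is, for each~$\varepsilon>0$, to take a near-extremizer~$\zeta$ for~$\B(x_1,x_1^2+1)$ furnished by the foliation and build along its tree of splits a filtration on~$[0,1]$ together with a companion~$\{\pm 1\}$-valued martingale~$\varphi$; the condition~$\|\zeta\|_{\BMO}\leq 1$ is precisely what keeps~$\varphi$ inside~$[-1,1]$ at every node and forces the resulting transform coefficients to have~$|\alpha_n|=1$.
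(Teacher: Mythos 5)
Your upper-bound argument is correct and closely mirrors the change-of-variables remark in Subsection~\ref{s71}: since~$\Phi$ sends diagonal segments affinely to tangent lines, the local concavity of~$\B$ yields~$\B\circ\Phi\in\Diagonal[f]$, hence~$\VB\leq\B\circ\Phi$. The lower bound, however, fails as proposed. The function~$\widehat\B := \VB\circ\Phi^{-1}$ you introduce is exactly the function~$\tilde{\BG}_1$ of Subsection~\ref{s71} — the minimal member of the class~$\tilde\Lambda[f,1]$ of functions required to be concave only along tangent lines — and this function is \emph{not} locally concave in general: it is strictly smaller than~$\B$ at interior points. The discrepancy occurs precisely in chordal domains, where~$\B$ is affine along chords rather than tangents, so~$\B\circ\Phi$ is strictly diagonally concave there and hence strictly above the minimal~$\VB$; for instance with~$f(t)=|t|^{3/2}$ the chordal formula~\eqref{vallun} gives~$\B(\Phi(0,\tfrac12))=\B(0,\tfrac34)=(3/4)^{3/4}\approx 0.806$, while the formula of Subsection~\ref{s43} gives~$\VB(0,\tfrac12)\approx 0.793$. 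If~$\widehat\B$ were locally concave, Theorem~\ref{SZTheorem} would force~$\widehat\B\geq\B$ everywhere, a contradiction. Your appeal to Fact~\ref{Auto} does not close this gap: that fact compares~$\BG_\eps$ with~$\BG^*_\eps$, built from \emph{subtangential} segments — a strictly richer family than tangent lines alone — so it says nothing about~$\tilde{\BG}_\eps$; moreover, Fact~\ref{Auto} is itself proved by the same foliation case analysis you are hoping to bypass. There is also an independent obstruction: any extra concavity directions enjoyed by~$\VB$ on~$\Sigma$ could not be pushed forward to concavity of~$\widehat\B$ along non-tangent lines, because~$\Phi^{-1}$ maps such lines to curves, not to segments.

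Your fallback via near-optimizers and an explicitly constructed~$\{\pm1\}$-martingale is a legitimate but laborious route, and it is precisely the one the paper declines to take. Instead, the paper constructs a function~$V$ on~$\Sigma$ directly, figure by figure, matching the foliation of~$\B$ (Sections~\ref{S3}--\ref{S5}): tangent domains become horizontal herringbones, chordal domains become vertical herringbones governed by the ODE~\eqref{ODEForA}, and linearity domains become bilinearity domains. One then verifies $C^1$-smoothness and diagonal concavity of the concatenation and applies Novikov's minimality criterion, Theorem~\ref{MishasTh}, to conclude~$V=\VB$ with no optimizer in sight; the identity~$V(x_1,0)=\B(x_1,x_1^2+1)$ is read off from the construction (Remarks~\ref{RemarkTangents}, \ref{RemarkAngle}, \ref{RemarkChords}, \ref{RemarkTrolleybus}, \ref{RemarkMulti}), and the continuity hypothesis on~$f$ is removed by the approximation argument of Subsection~\ref{s62}.
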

{\begin{Rem}
We believe that the continuity of~$f$ in the theorem above is redundant. Since the problem is non-linear\textup, naive approximation arguments do not work.
\end{Rem}
}
Theorems~\ref{TheoremForSubordination} and~\ref{TheoremForDiagonal} are manifestations of a transference principle between the extremal problems~\eqref{BellmanFunctionInTheStrip} and~\eqref{UDef},~\eqref{VDef}; we describe this transference in the present paper. Similar transference was used in~\cite{SVZZ2022} to obtain sharp bounds for the terminate distribution of a martingale whose square function is uniformly bounded.

In fact, Theorem~\ref{TheoremForDiagonal} follows from a more general result that allows to compute~$\VB$ on the entire domain~$\Sigma$. In a sense, this computation is yet another main result of the paper. The answer is formulated in terms of~$\B$, however, the dependence is far from being simple. To formulate it, we need to recall the construction of~$\B$ from~\cite{ISVZ2018}. The forthcoming subsection is devoted to this sketch.


\subsection{Description of minimal locally concave functions}\label{s14}
Let us fix~$f$ and~$\eps$. Consider the function
\eq{\label{MinimalLocallyCOncave}
\BG_\eps(y) = \inf\Set{G(y)}{G\in \Lambda[f,\eps]},\qquad y \in \Omega_\eps.
}
As we know from Theorem~\ref{SZTheorem}, this function coincides with~$\B_\eps$. {Since} now we wish to focus on its geometric properties, {we denote it with another symbol}. Since this function is minimal, its graph is somehow flat. Using the Carath\'eodory theorem about convex hulls, one may prove the following lemma (see~\cite{RT1977},~\cite{CNS1986}, or~\cite{ISZ2015} for a similar reasoning). 
\begin{Le}
For any~$y$ from the interior of~$\Omega_\eps$ there exists an open line segment~$\ell(y)$ such that $y \in\ell(y) \subset \Omega_\eps$ and the restriction of~$\BG_\eps$ to~$\ell(y)$ is affine. Either~$\BG_\eps$ is affine in a neighborhood of~$y$ or the direction of~$\ell(y)$ is unique.
\end{Le}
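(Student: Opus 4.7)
The plan is to establish both parts of the lemma by exploiting the minimality of $\BG_\eps$ in $\Lambda[f,\eps]$ together with the standard fact that the contact set of an affine supporting hyperplane to a concave function is convex. For the existence of an affine segment through a fixed interior point $y$, I would argue by contradiction. Choose a closed disk $K$ around $y$ with $K \subset \Omega_\eps^\circ$ disjoint from the fixed boundary; local concavity makes $\BG_\eps$ concave on the convex set $K$. Let $\pi$ be an affine function supporting $\BG_\eps$ at $(y, \BG_\eps(y))$, i.e.\ $\BG_\eps \le \pi$ on $K$ with equality at $y$. The contact set $\{z \in K : \BG_\eps(z) = \pi(z)\}$ is convex; if it contained any $z \neq y$, then $\BG_\eps$ would be affine on the connecting segment (sandwiched between concavity from below and $\pi$ from above), giving the sought $\ell(y)$. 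So assume the contact set equals $\{y\}$. By compactness of $\partial K$ and continuity, $\pi - \BG_\eps \geq 2\delta$ on $\partial K$ for some $\delta>0$. Consider the competitor
\[
\tilde G(z) = \begin{cases} \min(\BG_\eps(z), \pi(z) - \delta), & z \in K, \\ \BG_\eps(z), & z \in \Omega_\eps \setminus K, \end{cases}
\]
which satisfies $\tilde G(y) = \BG_\eps(y) - \delta < \BG_\eps(y)$. Since $K$ misses the fixed boundary, $\tilde G$ inherits the boundary condition from~\eqref{SetLambda}, so if $\tilde G$ is locally concave it lies in $\Lambda[f,\eps]$, contradicting~\eqref{MinimalLocallyCOncave}.

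The main obstacle is verifying local concavity of $\tilde G$ on an arbitrary segment $[u,v] \subset \Omega_\eps$ crossing $\partial K$. Inside $K$, $\tilde G$ is the minimum of the concave $\BG_\eps$ and the affine $\pi - \delta$, hence concave on $[u,v] \cap K$; outside $K$, $\tilde G = \BG_\eps$ is concave on the remainder of $[u,v]$ by local concavity. The delicate step is gluing: at each crossing of $[u,v]$ with $\partial K$ the $2\delta$ margin forces $\BG_\eps < \pi - \delta$ on an open neighborhood of the crossing point (on both sides of it), so $\tilde G$ coincides with $\BG_\eps$ on that full neighborhood and the two one-sided derivatives automatically match, giving concavity across the transition.

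For uniqueness, suppose $\BG_\eps$ is affine on two segments $\ell_1, \ell_2 \subset \Omega_\eps$ through $y$ with linearly independent directions $v_1, v_2$. Any vector $(a,b)$ in the local superdifferential $\partial \BG_\eps(y)$ must satisfy $(a,b) \cdot v_i$ equal to the slope of $\BG_\eps$ along $\ell_i$; linear independence forces $(a,b)$ to be unique, so $\BG_\eps$ is differentiable at $y$ with a unique affine supporting function $\pi$. Its contact set, taken inside a small convex neighborhood of $y$, is convex and contains neighborhoods of $y$ in both $\ell_1$ and $\ell_2$; their convex hull is a two-dimensional neighborhood of $y$, so $\BG_\eps = \pi$ there, i.e.\ $\BG_\eps$ is affine in a neighborhood of $y$.
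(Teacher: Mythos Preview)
Your argument is correct. For existence, the key identity $\min(g,h)=h-(h-g)_+$ shows that the minimum of a concave function $g$ and an affine function $h$ is always concave (since $(h-g)_+$ is convex), so $\tilde G$ is concave on any segment inside $K$; the $2\delta$ margin forces $\tilde G=\BG_\eps$ on a full two-sided neighborhood of $\partial K$, and since $\BG_\eps$ itself is concave on the whole segment $[u,v]\subset\Omega_\eps$, the pieces glue. The uniqueness part via the superdifferential and the convexity of the contact set is clean.

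Your route differs from the one the paper indicates. The paper gives no proof but points to Carath\'eodory's theorem and the cited references. The standard argument there goes: on a small closed disk $K\subset\Omega_\eps^\circ$ around $y$, minimality forces $\BG_\eps|_K$ to be the concave envelope of its own values on $\partial K$; hence $(y,\BG_\eps(y))$ lies on the upper boundary of the convex hull of the graph $\{(z,\BG_\eps(z)):z\in\partial K\}\subset\R^3$, and Carath\'eodory's theorem represents it as a convex combination of at most three boundary graph points---two give the segment $\ell(y)$, three non-collinear ones give a triangle of affinity. Your competitor $\tilde G$ is essentially a direct proof of the concave-envelope identity, after which you read off the segment from the contact set rather than from a Carath\'eodory representation. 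The Carath\'eodory route is shorter once the envelope step is accepted and generalizes to higher dimensions verbatim; your approach is more self-contained and makes the role of minimality of $\BG_\eps$ fully explicit.
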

For any~$y$ in the interior of $\Omega_\eps$ such that~$\BG_\eps$ is not affine in its neighborhood, we may consider the maximal by inclusion segment~$\ell_y$ such that~$\BG_\eps|_{\ell_y}$ is affine. These segments are called \emph{extremal}. We mention yet another folklore fact since it supports the intuition. We will not provide a proof since it is formally not needed.
\begin{Le}
If the function~$\BG_\eps$ is differentiable at~$y,$ then it is differentiable at every point~$z\in \ell_y$ and~$\nabla\BG_\eps (y) = \nabla \BG_\eps(z)$.
\end{Le}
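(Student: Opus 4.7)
The plan is to propagate the tangent plane at $y$ along the whole extremal segment $\ell_y$, exploiting that local concavity becomes global concavity on any convex subset of $\Omega_\eps$. Fix $z\in\ell_y$ with $z\ne y$. Since the open segment $\ell_y$ lies in the interior of $\Omega_\eps$, so does the compact subsegment $[y,z]$; hence, for some $r>0$, the open $r$-neighborhood $C$ of $[y,z]$ sits inside $\Omega_\eps$. The $r$-neighborhood of a convex set is convex, so $C$ is convex, and by local concavity the restriction $\BG_\eps|_C$ is concave.

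Write $L_y$ for the affine tangent to $\BG_\eps$ at $y$ and let $v$ be a unit direction of $\ell_y$. Concavity on $C$ together with differentiability at $y$ gives the global tangent inequality $L_y(w)\ge \BG_\eps(w)$ for $w\in C$. On $\ell_y$, the directional derivative of $\BG_\eps$ at $y$ in the direction $v$ equals both $\nabla\BG_\eps(y)\cdot v$ and the constant slope of the affine function $\BG_\eps|_{\ell_y}$; comparing these two expressions shows that $L_y$ actually coincides with $\BG_\eps$ on the entire segment $[y,z]$. In particular, $L_y(z)=\BG_\eps(z)$, so $L_y$ supports $\BG_\eps$ at $z$ as well.

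To upgrade this support to differentiability at $z$ with the same gradient, I would argue uniqueness. Let $L'$ be any affine function that supports $\BG_\eps$ locally at $z$. The standard ``local support equals global support'' trick for concave functions on a convex set (take $tw+(1-t)z$ and let $t\to 0^+$) yields $L'(w)\ge \BG_\eps(w)$ throughout $C$. Since $\BG_\eps$ is affine on $\ell_y$ near $z$, the inequality $L'\ge \BG_\eps$ together with $L'(z)=\BG_\eps(z)$ forces $L'(v)=L_y(v)$, so $L'$ and $L_y$ agree in the direction $v$. Using the affinity of $\BG_\eps$ on $[y,z]$, this identity in turn gives $L'(y)=\BG_\eps(y)$; thus $L'$ locally supports $\BG_\eps$ at $y$, and differentiability at $y$ forces $L'=L_y$. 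Since concave functions are differentiable precisely when the supporting hyperplane is unique, $\BG_\eps$ is differentiable at $z$ with $\nabla\BG_\eps(z)=\nabla\BG_\eps(y)$.

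The only genuinely non-routine ingredient is the construction of the convex set $C$: the domain $\Omega_\eps$ is not convex, so one cannot directly invoke the global tangent inequality for concave functions on it, nor can one move freely from $y$ to $z$ by a chord inside the domain. The tubular-neighborhood construction circumvents this, and everything else reduces to the familiar facts that tangent planes to a concave function are unique at points of differentiability and propagate along affinity segments.
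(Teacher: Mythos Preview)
The paper does not prove this lemma at all: immediately after stating it, the authors write ``We mention yet another folklore fact since it supports the intuition. We will not provide a proof since it is formally not needed.'' So there is no proof in the paper to compare your argument against.

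Your argument is correct and is the standard route to this folklore fact. The crucial step is indeed the passage from local concavity on the non-convex domain $\Omega_\eps$ to genuine concavity on a convex tubular neighborhood $C$ of $[y,z]$; once that is in place, the propagation of the tangent plane along the affinity segment and the uniqueness-of-support argument are routine convex analysis. One small caveat: your claim that ``the open segment $\ell_y$ lies in the interior of $\Omega_\eps$'' is not literally true in every case, since a tangent extremal touches the free boundary at its tangency point. This does not affect your argument when $z$ lies in the relative interior of $\ell_y$ away from such a point (and hence in the interior of $\Omega_\eps$), which is the substantive case; the tangency point and the endpoints on $\partial\Omega_\eps$ need a separate (easy) one-sided treatment, or can simply be excluded from the statement.
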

If~$\BG_\eps$ is not differentiable at~$y,$ one may formulate a similar principle for superdifferentials. At the points where~$\BG_\eps$ is twice differentiable, it satisfies the homogeneous Monge--Amp\`ere equation
\eq{\label{MongeAmpere}
\mathrm{det}\begin{pmatrix}
\frac{\partial^2 \BG_\eps}{\partial y_1^2}&\frac{\partial^2 \BG_\eps}{\partial y_1\partial y_2}\\
\frac{\partial^2 \BG_\eps}{\partial y_1\partial y_2}&\frac{\partial^2 \BG_\eps}{\partial y_2^2}
\end{pmatrix}\;=\;
\frac{\partial^2 \BG_\eps}{\partial y_1^2}\cdot \frac{\partial^2 \BG_\eps}{\partial y_2^2} - \Big(\frac{\partial^2 \BG_\eps}{\partial y_1\partial y_2}\Big)^2\!=0.
}
We warn the reader that~$\BG_\eps$ is rarely~$C^2$-smooth on the entire~$\Omega_\eps$. See Subsection~\ref{s72} below concerning smoothness questions. 

One may show that the endpoints of~$\ell_y$ lie on the boundary of~$\Omega_\eps$. If both endpoints lie on the fixed boundary, then~$\ell_y$ is called a \emph{chord}. If one of the endpoints lie on the free boundary, then one may show that~$\ell_y$ is tangent to the free boundary. In such a case,~$\ell_y$ is called a \emph{tangent}. The tangents that lie on the right of their tangency points are called \emph{right} tangents, the ones that lie on the left are \emph{left} tangents.

If the function~$\BG_\eps$ coincides with an affine function~$A$ in a neighborhood of~$y$, then we may consider the set where~$\BG_\eps = A$ {and restrict our attention to its connectivity component that contains~$y$}. One may prove that the part of the boundary of this set that lies in the interior of~$\Omega_\eps$, is linear. Such sets are called \emph{linearity domains}. We see that~$\Omega_\eps$ is split into several linearity domains and families of extremal segments. Such a splitting is called a \emph{foliation}. Usually, one can restore the function~$\BG_\eps$ from the foliation using special formulas, and the problem of calculating~$\BG_\eps$ reduces to finding its foliation.

In general, there can be an infinite number of linearity domains in a foliation, and the structure might be quite involved. The paper~\cite{ISVZ2018} suggests regularity conditions on~$f$ under which there are a finite number of linearity domains.

\begin{Cond}\label{reg}
The function~$f$ is two times continuously differentiable\textup,\, $f''$\! is piecewise monotone
and has only finite number of monotonicity intervals.
\end{Cond}
Let~$w_{\eps}(t) = e^{-{|t|}/{\eps}}$ for~$\eps > 1$.
\begin{Cond}\label{sum}
The integral~$\int_{-\infty}^{\infty}w_{\eps}(t)\,df''(t)$ is absolutely convergent for some~$\eps > 1$.
\end{Cond}
Condition~\ref{sum} slightly differs  from $w_{\eps} \in L_1(df)$, which is necessary for the finiteness of the Bellman function $\B_\eps$; the necessity may be observed from plugging~$\zeta(t) = -\eps \log t$ into~\eqref{BellmanFunctionInTheStrip}. 

Note that Condition~\ref{reg} yields~$f'''$ is a signed measure in the sense of distributions. The \emph{essential roots} of $f'''$ will play a significant role in what follows. The essential roots may be defined as the largest by inclusion intervals on which~$f''' = 0$ and in whose neighborhood~$f'''$ is neither positive nor negative. If~$c$ and~$v$ are two intervals, we write~$c< v$, provided the right endpoint of~$c$ lies on the left of the left endpoint of~$v$. 
\begin{Def}\label{roots}
The essential roots of \,$f'''$\! are closed intervals \textup(which can be single points or rays\textup) 
$c_0, c_1, \ldots, c_n$ and $v_1, v_2, \ldots, v_n$ such that $c_0 < v_1 < c_1 < v_2 < \cdots < v_n < c_n$ and 
$(\cup_i v_i) \bigcup (\cup_i c_i)$ is the complement to the set of the growth points of \,$f''$\! 
\textup(i.\,e.\textup, the points, in a neighborhood of which \,$f''$\! either strictly increases or decreases\textup). 
The measure \,$f'''$ \textup`changes sign\textup' from \textup`$-$\textup' to \textup`$+$\textup' at $v_i,$ from 
\textup`$+$\textup' to \textup`$-$\textup' at~$c_i$. 
\end{Def} 

We make an agreement that if in a neighborhood of $-\infty$ we have $f''' < 0$, then $c_0 = -\infty$. Similarly, if in a neighborhood of $+\infty$ we have $f''' > 0$, then $c_n = \infty$. 

Under the conditions above, all linearity domains in our foliations will intersect the fixed boundary by at most finite number of arcs. The domains that do not meet the free boundary are called \emph{closed multicups}.

A linearity domain with a single point on the fixed boundary is called an \emph{angle} (see Fig.~\ref{AngleEnvelope} 
on p.~\pageref{AngleEnvelope}). A linearity domain with two points on the fixed boundary is called a \emph{trolleybus} 
(see Fig.~\ref{RightTrolleyFigure} on p.~\pageref{RightTrolleyFigure}) if it is bounded by equally oriented tangents 
and a \emph{birdie} if it is bounded by a right tangent on the left and a left tangent on the right. Other linearity 
domains will be introduced in Section~\ref{S5} below.

\subsection{Minimal diagonally concave functions}\label{s15}
Minimal diagonally concave functions are also somehow flat. Consider the function
\eq{\label{MinimalDiagonallyConcaveFunction}
\VG(x) = \inf\Set{G(x)}{G\in \Diagonal[f]},\qquad x\in \Sigma.
}
By Theorem~\ref{BurkholderV}, this function coincides with~$\VB$. Now we wish to study the geometric properties of~$\VG$. We leave the {folklore} theorem below without proof since we will not formally use it. Nevertheless, knowing it is true helps to guess the minimal diagonally concave functions.
\begin{Th}
Let~$f$ be a locally bounded function. Let~$\VG\colon \Sigma\to \R$ be the minimal diagonally concave function defined 
in~\eqref{MinimalDiagonallyConcaveFunction}\textup, let~$x\in \Sigma$. Then\textup, there exists a closed 
segment~$\ell$ with non-empty interior such that~$x\in \ell$\textup,\; $\VG|_\ell$ is affine\textup, and~$\ell$ is 
parallel to one of the directions~$(1,1)$ or~$(1,-1)$.
\end{Th}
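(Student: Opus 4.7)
The plan is to use the probabilistic characterization~$\VG=\VB$ from Theorem~\ref{BurkholderV} together with a one-step analysis of the associated variational problem. Consider first~$x$ in the interior of~$\Sigma$, i.\,e.,~$|x_2|<1$. Every martingale admissible for~$\VB(x)$ must reach the free boundary~$\{|x_2|=1\}$, so after a harmless time shift one may assume its first step is non-trivial. The constraint~$|\alpha_1|=1$ forces this step to split~$x$ into two atoms~$x^+,x^-$ on a common diagonal line through~$x$ with probabilities~$p,1-p$ satisfying~$px^++(1-p)x^-=x$. Combining the tower property with the definition of~$\VB$ yields the one-step identity
\eq{\label{PlanIdentity}
\VG(x)=\sup_{(x^+,x^-,p)} \bigl[p\VG(x^+)+(1-p)\VG(x^-)\bigr],
}
the supremum ranging over non-trivial diagonal splittings of~$x$ inside~$\Sigma$; the reverse inequality is a restatement of diagonal concavity.

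Next I would attain this supremum at a non-degenerate splitting. Each~$x^\pm$ lies in the compact segment~$\Sigma\cap L$, where~$L$ is the relevant diagonal line through~$x$, so, passing to a subsequence with a fixed diagonal direction, a maximizing sequence~$(x^+_k,x^-_k,p_k)$ has a limit~$(x^+_*,x^-_*,p_*)$. Since~$\VG$ is diagonally concave and hence continuous on the interior of~$\Sigma$, one may pass to the limit in~\eqref{PlanIdentity} to obtain~$\VG(x)=p_*\VG(x^+_*)+(1-p_*)\VG(x^-_*)$. Provided the limiting splitting is non-degenerate~($x^+_*\neq x^-_*$), Jensen's equality case applied to the diagonally concave function~$\VG$ along the segment~$[x^-_*,x^+_*]$ forces~$\VG$ to be affine on this whole closed segment, which one takes as~$\ell$.

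The principal obstacle is to exclude the degenerate limit~$x^+_*=x^-_*=x$. I would dispose of it by a perturbation argument invoking minimality of~$\VG$: if every maximizing sequence collapsed to~$x$, then every diagonal splitting of~$x$ with~$|x^+-x^-|$ bounded below by a fixed constant would give strict inequality in~\eqref{PlanIdentity}, yielding a quantitative strict concavity of~$\VG$ along both diagonals at~$x$. Using this slack, one should construct a strictly smaller element of~$\Diagonal[f]$ by replacing~$\VG$ on a small diamond~$D$ around~$x$ (with sides parallel to~$(1,\pm1)$) by a diagonally concave function matching~$\VG$ on~$\partial D$ but strictly smaller at~$x$, contradicting minimality; making the global gluing diagonally concave is the delicate technical step. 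Finally, for~$x$ on the free boundary~$\{|x_2|=1\}$, one takes~$\ell$ as the Hausdorff limit of affine segments~$\ell_k$ through interior points~$x_k\to x$, invoking uniformity of the preceding non-degeneracy analysis to ensure that~$\ell$ has non-empty interior, with affinity of~$\VG|_\ell$ following from continuity.
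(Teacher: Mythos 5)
The paper itself does not prove this theorem: it is stated explicitly as a ``folklore'' result, left ``without proof since we will not formally use it,'' with the actual theory deferred to the cited literature (e.g.~\cite{Novikov2022}). So there is no proof in the paper to compare against, and the proposal must be judged on its own merits.

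Your high-level plan (use the martingale characterization~$\VG=\VB$, do a one-step analysis, and rule out the degenerate limit by a perturbation) is a reasonable route, and it parallels how the analogous locally concave statement is usually argued. But there are two genuine gaps.

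First, the identity~\eqref{PlanIdentity} as stated does not actually encode any special structure of~$\VB$: for \emph{any} diagonally concave function that is continuous at~$x$ it holds trivially, because letting~$x^{\pm}\to x$ makes the right-hand side approach~$\VG(x)$. Moreover the ``tower property'' derivation is not quite correct as written: the first non-trivial step of the martingale generally produces a convex combination of \emph{many} atoms~$y_1,\dots,y_m$ on the diagonal line, and for a concave restriction the two-point value with extreme endpoints is \emph{smaller} than the multi-atom value, not larger. What survives is only the trivial chain~$\E f(\psi_\infty)\le\sum p_k\VG(y_k)\le\VG(x)=\sup$, so the martingale step does not in fact pin down a non-degenerate splitting. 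This means the entire weight of the proof falls on your final step.

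Second, the degenerate-case exclusion is exactly where the mathematical content sits, and it is not carried out. You correctly flag ``making the global gluing diagonally concave'' as the delicate point, but this is not a technical afterthought: the naive candidates fail. The chord construction in rotated coordinates ($W(s,t)=\tfrac{b-s}{b-a}\VG(a,t)+\tfrac{s-a}{b-a}\VG(b,t)$) is separately concave and lies below~$\VG$, but it does not agree with~$\VG$ on the top and bottom edges of the rectangle, so the glue is discontinuous. The tangent-cone construction (minimum of supporting affine functions in each variable) lies \emph{above}~$\VG$, which is the wrong direction. The bilinear interpolation of corner values doesn't match~$\partial D$ either. Until a construction of a strictly smaller~$G\in\Diagonal[f]$ from the assumed strict diagonal concavity is written down and shown to glue correctly, the proof is incomplete. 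The Hausdorff-limit argument for boundary points also rests on a uniformity over~$x_k$ that is asserted but not proved. As a direction: rather than trying to repair the ad hoc perturbation, it may be cleaner to set up a Carath\'eodory-type duality for the class~$\Diagonal[f]$ (as is done for~$\Lambda[f,\eps]$) and read off the extremal segment from the support of an optimal representing measure; this avoids the gluing difficulty entirely.
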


In the study of locally concave functions, the affine functions play a special role: they are both concave and convex. In the case of diagonal concavity, affine functions have the same property. However, there is yet another function~$x_1^2 - x_2^2$ that is both diagonally concave and diagonally convex. Thus, for diagonally concave functions, we will consider \emph{domains of bilinearity}: these are the domains where~$\VG(x)$ equals~$c(x_1^2 - x_2^2) + L(x)$, where~$L$ is an affine function. Similar to the case of minimal locally concave functions, one may show that if we choose~$\ell$ to be maximal by inclusion such that~$\VG|_\ell$ is affine and~$\ell$ does not lie in a bilinearity domain, then at least one endpoint of~$\ell$ lies on the boundary of~$\Sigma$. Then,~$\VG$ solves the partial differential equation
\eq{\label{StrangeEquation}
\Big(\frac{\partial^2 \VG}{\partial x_1^2} + \frac{\partial^2 \VG}{\partial x_2^2}\Big)^2 \!= 4\, \Big(\frac{\partial^2 \VG}{\partial x_1\partial x_2}\Big)^2\!,
}
provided it is~$C^2$-smooth. See Subsection~\ref{s72} below for smoothness questions. Now we wish to survey the theory from~\cite{Novikov2022}. The cited paper works with biconcave functions, i.\,e., functions that are concave with respect to each of the coordinates individually. The theory is completely analogous since a biconcave function is a rotation of a diagonally concave function. 

\begin{Def}\label{DefinitionOfExtremal}
Let~$G\colon \Sigma\to \R$ be a diagonally concave function. We say it is extremal with respect to the direction~$(1,1)$ 
at the point~$x$ provided~$x$ is an endpoint of a segment~$\ell$ such that its second endpoint lies on the boundary 
of~$\Sigma$\textup, \;$\ell$ is parallel to~$(1,1)$\textup, the function~$G|_\ell$ is affine\textup, and~$G$ is differentiable 
in the direction~$(1,1)$ at~$x$.

The definition of a function extremal with respect to the direction~$(1,-1)$ is completely similar.
\end{Def}
\begin{Th}[Simplification of Corollary~$1.25$ in~\cite{Novikov2022}]\label{MishasTh}
Assume~$G\colon \Sigma\to \R$ is an upper semicontinuous diagonally concave function whose discontinuity set is discrete. {Let it also fulfill the bound~$|G(x)| \lesssim e^{|x_1|/\eps}$ for some~$\eps > 1$.} Assume for any~$x$ in the interior of~$\Sigma$ one of the following possibilities occur\textup:
\begin{enumerate}[1)]
\item the function~$G$ is bilinear in a neighborhood of~$x$\textup;
\item the function~$G$ is affine {with respect to~$(1,1)$} and extremal with respect to~$(1,1)$ at~$x$\textup;
\item the function~$G$ is affine {with respect to~$(1,-1)$} and extremal with respect to~$(1,-1)$ at~$x$\textup;
\item the function~$G$ is extremal with respect to both~$(1,1)$ and~$(1,-1)$ at~$x$.
\end{enumerate}
Then\textup,~$G$ is the minimal diagonally concave function \textup(with respect to its boundary values\textup).
\end{Th}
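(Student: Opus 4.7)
Let $f$ denote the common boundary trace of $G$ on the two fixed boundary components (assuming symmetric boundary values; the general case is analogous). Since $G$ is diagonally concave with $G(x_1, \pm 1) = f(x_1)$, we have $G \in \Diagonal[f]$ and hence $G(x) \geq \VG(x) = \VB(x)$ for every $x \in \Sigma$, by the definition of $\VG$ and Theorem~\ref{BurkholderV}. The content of the theorem is therefore the reverse inequality $G(x) \leq \VB(x)$.

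The plan is to construct, for each $x$ in the interior of $\Sigma$, a sequence of simple martingale pairs with $\psi_0 = x_1$, $\varphi_0 = x_2$, $|\alpha_n| = 1$, and $|\varphi_\infty| = 1$ almost surely, for which $\E f(\psi_\infty) \to G(x)$. The construction is iterative: given the current random point $X_n = (\psi_n, \varphi_n)$ — frozen if already on the fixed boundary — hypothesis (1), (2), (3), or (4) at $X_n$ prescribes a direction $(1, \pm 1)$ along which to split $X_{n+1}$. In case (1), $G$ is bilinear in a neighborhood, hence affine along both $(1, \pm 1)$ directions, so the split may be chosen along a $(1, \pm 1)$-chord of the bilinearity domain whose endpoints lie on the boundary of that domain. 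In cases (2) and (3), $G$ is affine on an interval in the relevant direction extending from a neighborhood of $X_n$ all the way to $\partial \Sigma$; split between $\partial \Sigma$ and a point in the affinity neighborhood. In every such split the increment $(d\psi_{n+1}, d\varphi_{n+1})$ is parallel to $(\pm 1, 1)$, so $|\alpha_{n+1}| = 1$, and the affinity of $G$ along the chord preserves $\E[G(X_{n+1}) \mid \F_n] = G(X_n)$. Iterating, $\E G(X_n) = G(x)$ for every $n$.

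Assuming the mass eventually concentrates on $\partial \Sigma$, upper semicontinuity of $G$ and the discrete discontinuity set yield $G(X_n) \to G(X_\infty) = f(\psi_\infty)$ almost surely, and the growth condition $|G(x)| \lesssim e^{|x_1|/\eps}$ with $\eps > 1$, combined with the bounded quadratic variation of $\varphi$ (which controls that of $\psi$ since $|d\psi_n| = |d\varphi_n|$), provides the exponential tail bounds needed for uniform integrability of $G(X_n)$. The limiting identity $G(x) = \lim_n \E G(X_n) = \E f(\psi_\infty) \leq \VB(x)$ then closes the argument.

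The \textbf{main obstacle} is precisely the geometric claim that the iteratively constructed random walk transports all mass to $\partial \Sigma$ in the limit. Case (4) is the delicate one, since the differentiability of $G$ there does not extend to affinity on either side, so the splits at such points are necessarily small two-point approximations with nonzero defect in $\E G$; these defects must be controlled cumulatively, and combined with the bilinearity-exit steps from case (1) to ensure overall progress. The key structural input is that the discreteness of the discontinuity set of $G$ rules out pathological foliations in which case (4) points or bilinearity regions are revisited infinitely often without depositing mass on the fixed boundary. Formalising this convergence analysis requires the detailed study of biconcave foliations carried out in~\cite{Novikov2022}, transferred from $[0,1]^2$ to $\Sigma$ by the rotation identifying diagonal concavity with biconcavity; the remainder of the proof is the standard Burkholder-style verification sketched above.
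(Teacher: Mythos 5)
The paper does not prove Theorem~\ref{MishasTh}; it imports it verbatim as a simplification of Corollary~1.25 in~\cite{Novikov2022}, after rotating the biconcave picture of that paper to the diagonally concave one used here. So there is no in-paper proof to compare against, and the fair question is whether your sketch, read on its own, actually establishes the statement. It does not, and the honest admission at the end of your proposal pinpoints exactly where it fails: you have not shown that the iterated martingale walk deposits all of its mass on $\partial\Sigma$, and without that the passage from $\E G(X_n) = G(x)$ to $\E f(\psi_\infty) = G(x)$ is unjustified. In other words, the entire analytic content of the theorem (and of Novikov's Corollary~1.25) is the convergence of the random walk to the fixed boundary together with the control of the defects accumulated at the type-(4) points, and your argument defers precisely these to the reference.

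There are two further points worth repairing even if you are content to cite the convergence step. First, the splits you propose in case~(1) land on the boundary of the bilinearity domain, not on $\partial\Sigma$; bilinearity domains are interior figures, so after such a split the walk is still strictly inside the strip and the argument must continue from there. You should make explicit that these steps only reduce the problem to the other cases at the new location, and this compounds the need for a termination argument. Second, your appeal to $\VB$ via Theorem~\ref{BurkholderV} presupposes that the two boundary traces $x_1\mapsto G(x_1,1)$ and $x_1\mapsto G(x_1,-1)$ coincide; the theorem as stated allows them to differ, and the class $\Diagonal[f]$ with a single $f$ does not cover that case. The lower bound $G\ge\VG$ and the final inequality should therefore be phrased directly in terms of the minimal diagonally concave extension of the boundary pair $\big(G(\cdot,1),G(\cdot,-1)\big)$, either by extending Theorem~\ref{BurkholderV} to asymmetric data or by working intrinsically with the class of diagonally concave functions majorizing the boundary traces. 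Finally, in case~(4) extremality gives a ray from $x$ to $\partial\Sigma$ on which $G$ is affine, but $x$ is an endpoint of that ray; a balanced two-point split around $x$ in that direction necessarily steps off the affine segment on one side, and the differentiability hypothesis only controls this error to first order. Quantifying and summing these second-order defects is exactly the work done in~\cite{Novikov2022}, and your sketch offers no substitute for it.
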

Usually, the construction of the Bellman function goes as follows. First, we guess some function~$B$ that has the required concavity properties (say, it is locally concave or diagonally concave) and is also somehow flat (solves~\eqref{MongeAmpere} or~\eqref{StrangeEquation}).  Usually,~$B$ is called a \emph{Bellman candidate}. To make a guess, we study the local structure of possible Bellman candidates and then try to construct a global Bellman candidate from the local ones. From concavity, we get that~$B$ is not smaller than the Bellman function. To obtain the reverse inequality, we need to construct an \emph{optimizer}~$\phi_x$ for any~$x$ in the domain of our Bellman function. The optimizers are the functions that the functional we optimize in the definition of the Bellman function attains its maximal value at. Say, for the optimization problem~\eqref{BellmanFunctionInTheStrip}, we wish to find a function~$\zeta_x$ such that
\eq{
\av{f(\zeta_x)}{[0,1]} =B(x), \quad \Big(\av{\zeta_x}{[0,1]},\av{\zeta_x^2}{[0,1]}\Big) = x,\quad \text{and}\quad \zeta_x \in \BMO_\eps.
}
If such a function is constructed, then we obtain the reverse inequality~$\B(x)\geq B(x)$ from the definition of~$\B$ and prove that~$\B = B$. This gives us the desired analytic expression for~$\B$ since~$B$ is defined by an explicit formula.

The problem is that sometimes the optimizers do not exist, i.\,e., the supremum in the formula for the Bellman function is not attained as a value at certain function. This is quite often the case for the optimization problems~\eqref{VDef}. The construction of the corresponding optimizers (especially for the cases of foliations considered in Section~\ref{S4}) might be quite laborious. Theorem~\ref{MishasTh} allows to avoid working with optimizers. 

The verification of concavity of a Bellman candidate is usually a routine problem. The lemma below often simplifies the computations.
\begin{Le}\label{affineLemma}
Let~$G$ be a differentiable function on a subdomain of~$\R^2$ that is affine with respect to one of the directions~$(1,-1)$ or~$(1,1)$. Let~$H$ be its directional derivative with respect to the other direction. Then\textup,~$H$ is also affine along the same direction as~$G$.  
\end{Le}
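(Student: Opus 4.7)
The plan is to argue by symmetry: we may assume $G$ is affine along the direction~$(1,1)$ and then show that $H := \partial_{(1,-1)} G$ is also affine along~$(1,1)$. The essential preliminary observation is that the slope $g(x) := \partial_{(1,1)} G(x)$ is constant along every line in direction~$(1,1)$: on such a line the restriction of $G$ is an affine function of the parameter, so its derivative along the line equals the slope of that affine function, which is independent of the base point.

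Given this, I would fix $x^0$ in the domain and, for $s$ sufficiently small and $t$ such that $x^0 + t(1,1)$ lies in the domain, write
\[
G\bigl(x^0 + t(1,1) + s(1,-1)\bigr) = G\bigl(x^0 + s(1,-1)\bigr) + t\,g\bigl(x^0 + s(1,-1)\bigr),
\]
together with the analogous identity at $s=0$. Subtracting, dividing by $s$, and letting $s \to 0$, the left-hand side tends to $H(x^0 + t(1,1))$ by definition of the directional derivative, while the right-hand side splits as
\[
H(x^0) + t \cdot \lim_{s\to 0} \frac{g(x^0 + s(1,-1)) - g(x^0)}{s}.
\]
The existence of the left-hand limit together with the existence of $H(x^0)$ forces the remaining limit to exist; denoting it by $h(x^0)$, we obtain
\[
H(x^0 + t(1,1)) = H(x^0) + t\,h(x^0),
\]
which is affine in $t$, as required.

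Morally, the statement is the chain of identities $\partial_{(1,1)}^2 H = \partial_{(1,1)}^2 \partial_{(1,-1)} G = \partial_{(1,-1)}\partial_{(1,1)}^2 G = 0$, but since $G$ is only assumed once differentiable, a Schwarz-type commutation of second derivatives is not at our disposal. The main subtlety, and essentially the only place where care is required, is thus to replace the formal commutation by a direct finite-difference computation, exploiting the \emph{pointwise} constancy of $g$ along $(1,1)$-lines rather than the vanishing of any second derivative of $G$.
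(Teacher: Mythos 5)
Your argument is correct and is essentially the same as the paper's, just carried out one level more explicitly. The paper writes (in the symmetric orientation) $G(x) = (x_1 - x_2)\,k(x_1+x_2) + b(x_1+x_2)$ and differentiates to get $H(x) = 2(x_1-x_2)\,k'(x_1+x_2) + 2b'(x_1+x_2)$, which is manifestly affine along the lines $x_1+x_2=\mathrm{const}$; your ``slope'' function $g$ is $2k$ up to this change of orientation, and your finite-difference passage to the limit in $s$ is precisely what justifies the existence of $k'$ (equivalently of your $h$), a point the paper leaves implicit. So the only genuine difference is that you supply, via the ``existence of the whole limit forces existence of the remaining piece'' step, the justification that the slope function inherits differentiability in the transverse direction from the differentiability of $G$; this is a worthwhile clarification but not a different method.
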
  
\begin{proof}
Without loss of generality,~$G(x) = (x_1 - x_2)k(x_1+x_2) + b(x_1 + x_2)$, where~$k$ and~$b$ are functions of a single variable. Then,~$H(x) = 2(x_1 - x_2)k'(x_1+x_2) + 2b'(x_1 + x_2)$ is an affine function on the lines~$x_1 + x_2 = const$.
\end{proof}

\subsection{Plan of the paper}\label{s16}
In the forthcoming Section~\ref{S2}, we prove Theorem~\ref{TheoremForSubordination} using soft methods from~\cite{StolyarovZatitskiy2016}. As a by-product of these reasonings, we also obtain the inequality~$\VB(x_1,0) \leq \B(x_1,x_1^2 + 1)$ in Theorem~\ref{TheoremForDiagonal}, see Corollary~\ref{InequalityOnDiagonal} below. Most of the remaining part of the paper contains the proof of the reverse inequality~$\VB(x_1,0) \geq \B(x_1,x_1^2 + 1)$ and the computation of the function~$\VB$. The exposition is split into several parts, each part is devoted to a specific figure in the foliation for~$\B$. We recall the formulas for the solution of the Monge--Amp\`ere equation generated by the corresponding foliation and also the conditions for the local concavity of the constructed solution. After that we describe the corresponding foliation in~$\Sigma$ and show that exactly the same conditions provide diagonal concavity of the solution to~\eqref{StrangeEquation}. Note that all these reasoning work for the case where~$f$ fulfills Conditions~\ref{reg} and~\ref{sum} only. 

In Section~\ref{S3}, we describe tangent domains (see Subsections~\ref{s31} and~\ref{s32}) and the simplest linearity domain, the angle (see Section~\ref{s33}). The tangents in~$\Omega$ correspond to {foliations called }horizontal herringbones in~$\Sigma$ and angles correspond to bilinearity domains in~$\Sigma$ called squares. We conclude Section~\ref{S3} with examples. 

Section~\ref{S4} contains the description of chordal domains and their relatives in~$\Sigma$ called vertical herringbones. In Subsection~\ref{s41} we recall the structure of a chordal domain and describe the vertical herringbones. In Subsection~\ref{s42}, we show that the conditions for a~$C^1$-smooth concatenation of a function constructed from a chordal domain and tangent domains are the same as the conditions for a~$C^1$-smooth concatenation of a function constructed from horizontal and vertical herringbones. We finish Section~\ref{S4} with an example.

In Section~\ref{S5}, we describe the other linearity domains in~$\Omega$ and the corresponding bilinearity domains in~$\Sigma$. Section~\ref{s51} contains the theory for linearity domains with two points on the lower boundary called trolleybuses and their relatives, called corners, in~$\Sigma$. Section~\ref{s52} is devoted to multifigures. Those are linearity domains that have more than two points on the lower boundary. Here the presentation is more condensed, because the reasonings are usually similar to previous cases.

Section~\ref{S6} concludes the proof of Theorem~\ref{TheoremForDiagonal}. In Subsection~\ref{s61}, we provide a digest of the combinatorial description of a foliation in~$\Omega$. Since we have proved~\eqref{TheoremForDiagonalFormula} for any individual figure, the validity of this formula for the whole Bellman function simply follows from the fact that the Bellman function is defined via described foliation. This was proved in~\cite{ISVZ2018}. Note that to invoke the theory from~\cite{ISVZ2018}, we need to assume~$f$ satisfies Conditions~\ref{reg} and~\ref{sum}. Thus, we first prove Theorem~\ref{TheoremForDiagonal} for such~$f$ and then, in Section~\ref{s62}, reduce the general case to the considered one by means of an approximation argument. 

We provide auxiliary information in Section~\ref{S7}. It is split into three subsections. The first one is about automatic concavity. It appears that some of our minimal functions are concave in additional directions. The second one is about smoothness. Since our Bellman functions solve the homogeneous Monge--Amp\`ere equation (or more involved equations), it is natural to ask how do their regularity properties depend on the regularity properties of the cost functions. We do not provide any results in this direction, only indicate the problems and related research. The third topic is about numerical computations for some of our functions. {We finish the paper with a short list of conjectures and open questions.}

\section{$\Omega$-martingales}\label{S2}
\subsection{Martingale representation of minimal locally concave functions}\label{s21}
We recall two useful notions from~\cite{StolyarovZatitskiy2016}.
\begin{Def}
Let~$\omega$ be a subset of~$\R^d$. We call a point~$y \in \partial \omega$ locally extremal if there are no open segments~$\ell \subset \cl \omega$ such that~$y \in \ell$. The set of all locally extremal points is called the fixed boundary of~$\omega$ and denoted by~$\FixedBoundary \omega$. The set~$\FreeBoundary \omega = \partial \omega \setminus \FixedBoundary \omega$ is called the free boundary.
\end{Def}
In the case of the parabolic strip~$\Omega_\eps$ defined in~\eqref{ParabolicStrip}, the set~$\set{y\in \R^2}{y_2 = y_1^2}$ is the fixed boundary and~$\set{y\in \R^2}{y_2 = y_1^2 + \eps^2}$ is the free boundary. 
\begin{Def}\label{BasicMartingales}
Let~$\omega \subset \R^d$ be a closed set. An~$\R^d$-valued martingale~$M$ adapted to some filtration~$\{\F_n\}_n$ is called an~$\omega$-martingale if it satisfies the conditions listed below.
\emph{\begin{enumerate}
\item \emph{There exists a random variable~$M_{\infty}$ with values in~$\FixedBoundary \omega$ such that
\eq{\label{LevyMart}
\E |M_{\infty}| < \infty \quad \hbox{and}\quad M_n = \E(M_{\infty}\mid \F_n).
}
}
\item \emph{For every~$n \in \{0\}\cup \N$ and every atom~$a \in \F_n$ there exists a convex set~$C_a \subset \omega$ such that~$M_{n+1}|_a$ lies inside~$C_a$ almost surely.}
\end{enumerate}
}
\end{Def} 
An~$\omega$-martingale is a martingale that wanders inside~$\omega$ and stops at~$\FixedBoundary\omega$. We also prevent it from `jumping over the boundary', this is done with the help of the sets~$C_a$. The result below will shortly imply Theorem~\ref{TheoremForSubordination} and the inequality~$\VB(x_1,0) \leq \B(x_1,x_1^2+1)$ in Theorem~\ref{TheoremForDiagonal}. 
\begin{Th}\label{OmegaMartTh}
Let~$\{\psi_n\}_n$ be a martingale transform of a martingale~$\{\varphi_n\}_n$ uniformly bounded by~$1$. Then\textup, 
the~$\R^2$\!-valued martingale
\eq{\label{OurMartingale}
M_n = \Big(\psi_n, \E(\psi_\infty^2\mid \F_n)\Big),\quad n \geq 0,
}
is an~$\Omega$-martingale.
\end{Th}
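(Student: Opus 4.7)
The plan is to verify in order the three conditions from Definition~\ref{BasicMartingales}. The martingale property is almost tautological: $\{\psi_n\}_n$ is a martingale by hypothesis, and $\{\E(\psi_\infty^2\mid\F_n)\}_n$ is a Levy martingale. Since the martingale transform of a bounded martingale by a predictable sequence uniformly bounded by $1$ is $L_2$-bounded, $\psi_\infty\in L_2$, so $M_\infty = (\psi_\infty,\psi_\infty^2)$ is well defined and integrable. It lies in $\FixedBoundary\Omega = \{y_2 = y_1^2\}$.

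Next I would check $M_n\in\Omega$. The inequality $\psi_n^2 \le \E(\psi_\infty^2\mid\F_n)$ is conditional Jensen. For the upper estimate, conditional orthogonality of martingale differences together with $|\alpha_k|\le 1$ and $|\varphi_\infty|\le 1$ gives
\eq{
\E(\psi_\infty^2\mid\F_n) - \psi_n^2 = \sum_{k>n}\E(\alpha_k^2|d\varphi_k|^2\mid\F_n)\le \sum_{k>n}\E(|d\varphi_k|^2\mid\F_n) = \E(\varphi_\infty^2\mid\F_n)-\varphi_n^2 \le 1.
}

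The main step is to supply, for each atom $a\in\F_n$, a convex set $C_a\subset\Omega$ containing every value $M_{n+1}|_{a'}$ where $a'\subset a$ is an $\F_{n+1}$-atom. This is the delicate point because $\Omega$ itself is not convex. Fix an atom $a$ and set $x_1 = \psi_n|_a$, $v = \varphi_n|_a$, and $\alpha = \alpha_{n+1}|_a$, a single number with $|\alpha|\le 1$ by predictability. Applying the subordination estimate from the previous paragraph but at time $n+1$, on any sub-atom $a'$ with $\delta = d\varphi_{n+1}|_{a'}$ I obtain
\eq{
(\psi_{n+1}|_{a'})^2 \le \E(\psi_\infty^2\mid\F_{n+1})|_{a'} \le (\psi_{n+1}|_{a'})^2 + 1 - (v+\delta)^2,
}
while $\psi_{n+1}|_{a'} = x_1 + \alpha\delta$ and $\varphi_{n+1}|_{a'} = v+\delta\in[-1,1]$. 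Assuming $\alpha\neq 0$ and parametrizing by $y_1 = x_1+\alpha\delta$ I would take
\eq{
C_a = \Set{(y_1,y_2)\in\R^2}{y_1^2 \le y_2 \le y_1^2 + 1 - \big(v + (y_1-x_1)/\alpha\big)^2,\ |v+(y_1-x_1)/\alpha|\le 1}.
}
The upper bounding function is quadratic in $y_1$ with leading coefficient $1 - 1/\alpha^2 \le 0$, hence concave; therefore $C_a$ is the intersection of the epigraph of a convex function, the hypograph of a concave function, and a slab, and so is convex. It lies inside $\Omega$ because the upper bound is at most $y_1^2 + 1$, and the two-sided estimate above places $M_{n+1}|_{a'}$ inside it. In the degenerate case $\alpha = 0$ all values $\psi_{n+1}|_{a'}$ equal $x_1$ and one takes $C_a = \{x_1\}\times[x_1^2,x_1^2+1]$.

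I expect the only genuine obstacle to be recognizing how the condition $|\alpha|\le 1$ converts the naive strip $\Omega$ (non-convex) into a convex admissible region locally around $M_n|_a$: it is exactly this bound that forces the upper envelope of the admissible set to be a concave parabola, making the slice convex. Once $C_a$ is identified the rest of the verification is essentially bookkeeping.
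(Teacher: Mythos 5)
Your proof is correct, and it takes a genuinely different route from the paper. The paper proves a separate Lemma (\ref{ShrinkingLemma}) about finite point configurations in $\Omega$: given points $y_k$ with convex hull in $\Omega$ and points $z_k\in\Omega$ whose pairwise first-coordinate gaps shrink and whose ``heights'' $z_{k,2}-z_{k,1}^2$ decrease, the convex hull of the $z_k$ also lies in $\Omega$; the proof is a short algebraic identity expressing the height of a convex combination in terms of the heights of the constituents and squared differences. The set $C_a$ is then taken to be the convex hull of the actual values $M_{n+1}|_{a'}$. You instead construct $C_a$ directly as the ``lens'' between the lower parabola and a concave parabola, using the parametrization $y_1 = x_1 + \alpha\delta$, $\varphi_{n+1} = v+\delta$, and the decisive observation that the quadratic upper envelope has leading coefficient $1-1/\alpha^2 \le 0$ precisely because $|\alpha|\le 1$. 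Your approach makes the role of the predictability bound geometrically transparent and produces an explicit convex region; the paper's lemma is more abstract and slightly more reusable (it never singles out the transforming coefficient but only uses that first-coordinate distances shrink). Both are essentially equivalent in difficulty; the subordination estimate $\E\big((\psi_\infty-\psi_{n+1})^2\mid\F_{n+1}\big)\le\E\big((\varphi_\infty-\varphi_{n+1})^2\mid\F_{n+1}\big)$ and the variance identity are common to both arguments. One small point worth stating explicitly: the degenerate case $\alpha=0$, which you handle separately, is absorbed automatically in the paper's convex-hull formulation.
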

The proof will be based upon the following geometric lemma.
\begin{Le}\label{ShrinkingLemma}
Assume that the convex hull of the points~$y_k = (y_{k,1},y_{k,2})\in \R^2$\textup, $k=1,2,\ldots, m$\textup, lies in~$\Omega_1$. Let the points~$z_k = (z_{k,1},z_{k,2})\in \Omega_1$,~$k=1,2,\ldots,m$\textup, be such that
\alg{
\forall j,k \quad\  |z_{k,1}-z_{j,1}|\leq |y_{k,1}-y_{j,1}|\,;\\
\forall k \qquad z_{k,2} - z_{k,1}^2 \leq y_{k,2} - y_{k,1}^2.\ \;
}
Then\textup, the convex hull of the points~$z_k$ also lies inside~$\Omega$.
\end{Le}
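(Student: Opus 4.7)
The plan is to verify the two defining inequalities of $\Omega$ on an arbitrary convex combination $\sum_k \lambda_k z_k$ of the points $z_1,\dots,z_m$, where $\lambda_k\ge 0$ and $\sum_k\lambda_k = 1$. Call this combination $\zeta=(\zeta_1,\zeta_2)$. We must show $\zeta_1^2 \le \zeta_2 \le \zeta_1^2 + 1$.

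The lower bound $\zeta_2\ge\zeta_1^2$ is immediate: since each $z_k\in \Omega_1$, we have $z_{k,2}\ge z_{k,1}^2$, hence by Jensen's inequality for $t\mapsto t^2$,
\eq{
\zeta_2 = \sum_k \lambda_k z_{k,2} \ge \sum_k \lambda_k z_{k,1}^2 \ge \Big(\sum_k \lambda_k z_{k,1}\Big)^2 = \zeta_1^2.
}

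The upper bound is where the two hypotheses come into play. The key identity I want to use is the pair representation of the variance,
\eq{
\sum_k \lambda_k z_{k,1}^2 - \Big(\sum_k \lambda_k z_{k,1}\Big)^2 = \tfrac12 \sum_{j,k}\lambda_j\lambda_k (z_{k,1}-z_{j,1})^2,
}
and the analogous identity for the $y_k$'s. The pairwise contraction hypothesis $|z_{k,1}-z_{j,1}|\le|y_{k,1}-y_{j,1}|$ then gives
\eq{
\sum_k\lambda_k z_{k,1}^2 - \zeta_1^2 \;\le\; \sum_k\lambda_k y_{k,1}^2 - \Big(\sum_k\lambda_k y_{k,1}\Big)^2.
}
Combining this with the pointwise hypothesis $z_{k,2}-z_{k,1}^2\le y_{k,2}-y_{k,1}^2$ yields
\eq{
\zeta_2 - \zeta_1^2 \;\le\; \sum_k\lambda_k y_{k,2} - \Big(\sum_k\lambda_k y_{k,1}\Big)^2.
}
Finally, because the convex hull of the $y_k$'s lies in $\Omega_1$, the right-hand side is bounded by $1$, which gives $\zeta_2\le \zeta_1^2 + 1$ and completes the proof.

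There is no real obstacle here; the only subtlety is recognizing that the pairwise contraction of the first coordinates is exactly what is needed to compare the variances of the two projected point systems, which is the content of the variance identity above. Everything else follows by chaining the three inequalities together with Jensen's inequality for the lower bound.
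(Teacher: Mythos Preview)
Your proof is correct and follows essentially the same route as the paper: both arguments hinge on the pair representation of the variance, $\sum_k\lambda_k z_{k,1}^2 - \zeta_1^2 = \tfrac12\sum_{j,k}\lambda_j\lambda_k(z_{k,1}-z_{j,1})^2$, to convert the pairwise contraction hypothesis into a comparison of $\zeta_2-\zeta_1^2$ with the analogous quantity for the $y_k$'s. The paper omits the lower bound $\zeta_2\ge\zeta_1^2$ (it follows immediately from convexity of the epigraph of $t\mapsto t^2$), but your explicit verification via Jensen is of course fine.
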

\begin{proof}
Let~$\{\alpha_k\}_{k=1}^m$ be positive numbers with sum one, let~$\bz = \sum_{k=1}^m\alpha_k z_k$. We wish to prove that~$\bz_2-\bz_1^2 \leq 1$.
Let us estimate this expression:
\mlt{
\bz_2-\bz_1^2 = \sum\limits_{k=1}^m\alpha_k z_{k,2} - \Big(\sum\limits_{k=1}^m \alpha_kz_{k,1}\Big)^2\! 
= \sum\limits_{k=1}^m\alpha_k(z_{k,2} - z_{k,1}^2) + \sum\limits_{k=1}^m(\alpha_k - \alpha_k^2)z_{k,1}^2 
- \!\!\!\sum\limits_{k,j\colon k\ne j}\!\!\!\alpha_k\alpha_jz_{k,1}z_{j,1} \\
=\sum\limits_{k=1}^m\alpha_k(z_{k,2}-z_{k,1}^2) + \tfrac12\!\!\!\sum\limits_{k,j\colon k\ne j}\!\!\!\alpha_k\alpha_j(z_{k,1}-z_{j,1})^2,
}
since~$\alpha_k - \alpha_k^2 = \alpha_k(1-\alpha_k)\, = \!\!\!\sum\limits_{j\colon j\ne k}\!\!\!\alpha_k\alpha_j$. By a similar computation,
\eq{
\by_2-\by_1^2 \,= \sum\limits_{k=1}^m\alpha_k(y_{k,2}-y_{k,1}^2) 
\,+\,\tfrac12 \!\!\!\sum\limits_{k,j\colon k\ne j}\!\!\!\alpha_k\alpha_j(y_{k,1}-y_{j,1})^2;
}
here~$\by = (\by_1,\by_2)=\sum_{k=1}^m\alpha_k y_{k}\in\Omega_1$. Consequently,~$\bz_2-\bz_1^2 \leq \by_2-\by_1^2 \leq 1$, and the lemma is proved.
\end{proof}
\begin{proof}[Proof of Theorem~\emph{\ref{OmegaMartTh}}.]
{The definition of an~$\omega$-martingale consists of two parts. The verification of the first part (that~$M$ given by~\eqref{OurMartingale} is generated by its limit random variable attaining values in the parabola~$y_2 = y_1^2$, which is the fixed boundary of~$\Omega$) is simpler. We set~$M_{\infty} = (\psi_{\infty},\psi_{\infty}^2)$
and rely upon Doob's theorem to verify~\eqref{LevyMart}. To justify the second part of the definition,
} we will be using the formula
\eq{\label{MartDif}
\E\big((\phi_\infty - \phi_n)^2\mid \F_n\big) = \E\Big(\sum\limits_{k > n}(d\phi_k)^2\mid \F_n\Big),\quad n \geq 0,
}
here~$\phi = \{\phi_n\}_n$ is an arbitrary~$L_2$-martingale with the limit value~$\phi_\infty$. It yields the inequality
\eq{\label{L2DiffSubord}
\E((\psi_\infty - \psi_n)^2\mid \F_n) \leq \E((\varphi_\infty - \varphi_n)^2\mid \F_n)
}
for all~$n \geq 0$. Let now~$a\in \F_n$ be an atom, let~$a_1,a_2,\ldots,a_m$ be all its kids, i.\,e., the atoms of~$\F_{n+1}$ lying inside~$a$. Denote
\eq{
\begin{aligned}
&y_{k,1} = \varphi_{n+1}(a_k),\quad  y_{k,2} = \E(\varphi_\infty^2\mid \F_{n+1})(a_k),\\
&z_{k,1} = \psi_{n+1}(a_k),\quad  z_{k,2} = \E(\psi_\infty^2\mid \F_{n+1})(a_k),
\end{aligned} \qquad k=1,2,\ldots,m;
}
{here we allow a slight abuse of notation treating an~$\F_n$-measurable random variable as a function defined on the atoms of~$\F_n$.}
Since
\eq{\label{VarianceFormula}
\E((\phi_\infty - \phi_n)^2\mid \F_n) = \E(\phi_\infty^2\mid \F_n) - \phi_n^2
}
for any martingale~$\phi$,  we have, in view of~\eqref{L2DiffSubord},
\eq{
0 \leq z_{k,2} - z_{k,1}^2 \leq y_{k,2} - y_{k,1}^2 \leq 1.
}
The latter inequality in the chain is a consequence of the requirement~$\|\varphi_\infty\|_{L_\infty}\leq 1$. We also note that for any~$k,j = 1,2,\ldots, m$,
\eq{\label{MartingaleTransformMultiplFormula}
|z_{k,1} - z_{j,1}| = |\alpha(a)||y_{k,1} - y_{j,1}| \leq |y_{k,1} - y_{j,1}|.
}
Since~$|\varphi_{\infty}| \leq 1$ almost surely, the points~$y_k$ lie inside the set
\eq{
\Set{{\bf y} = ({\bf y_1, y_2})\in \R^2}{{\bf y_1^2} \leq \bf{y_2} \leq 1}.
} 
Therefore, their convex hull belongs to~$\Omega$. By Lemma~\ref{ShrinkingLemma}, the convex hull of the points~$z_k$ lies inside~$\Omega$ as well, which proves that~$M_n$ is an~$\Omega$-martingale.
\end{proof}

\subsection{Proof of Theorem~\ref{TheoremForSubordination}}\label{s22}
The main theorem of~\cite{StolyarovZatitskiy2016} says that
\eq{\label{OmegaMartFormula}
\B(y) = \sup\Set{\E f(M_{\infty,1})}{M_0 = y,\ M = \{(M_{n,1}, M_{n,2})\}_n\ \text{is an $\Omega$-martingale}},\quad y\in \Omega,
}
provided~$f$ is locally bounded and globally bounded from below; {here~$M_{\infty,1}$ is the scalar random variable being the first coordinate of the~$\R^2$-valued random variable~$M_\infty$.} We will rely upon this formula in the forthcoming  proof.
\begin{proof}[Proof of Theorem~\emph{\ref{TheoremForSubordination}}]
Let~$\varphi$ and~$\psi$ be the same as in~\eqref{UDef}. By Theorem~\ref{OmegaMartTh},~$M_n$ {given by~\eqref{OurMartingale}}
is an~$\Omega$-martingale. Therefore, 
\eq{\label{eq222}
\E f(\psi_\infty) = \E f(M_{\infty,1}) \Leqref{OmegaMartFormula} \B(M_0)= \B(\psi_0, \E \psi_\infty^2). 
}
Since~$\psi_0 = x_1$,~$\varphi_0 = x_2$, and
\eq{\label{L2Estimate}
\E\psi_\infty^2 \Eeqref{VarianceFormula} \E(\psi_\infty - \psi_0)^2 + \psi_0^2 \Leqref{L2DiffSubord} \E(\varphi_\infty - \varphi_0)^2 + x_1^2 \Eeqref{VarianceFormula} \E\varphi_\infty^2 - x_2^2 + x_1^2,
} 
the value~$\E\psi_\infty^2$ lies between~$x_1^2$ and~$x_1^2+1-x_2^2$ and we obtain from~\eqref{eq222}
\eq{
\E f(\psi_\infty) \leq \sup\limits_{0\leq \delta \leq 1-x_2^2} \B(x_1,x_1^2+\delta).
}
A passage to the supremum on the left hand side finishes the proof.
\end{proof}
\begin{Que}\label{Que1}
When does~\eqref{FirstMajorization} turn into equality\textup?
\end{Que}

The corollary below provides `a half' of Theorem~\ref{TheoremForDiagonal}. This `half' is simpler than the other one.
\begin{Cor}\label{InequalityOnDiagonal}
For any~$x\in \Sigma$\textup, we have
\eq{\label{SecondMajorization}
\VB(x) \leq  \B(x_1,x_1^2+1-x_2^2).
}
\end{Cor}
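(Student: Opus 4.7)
The plan is to mimic the proof of Theorem~\ref{TheoremForSubordination} almost verbatim, exploiting the fact that the constraints defining~$\VB$ are strictly more restrictive than those defining~$\UB$. Specifically, the constraints~$|\alpha_n|=1$ and~$|\varphi_\infty|=1$ a.s. force both inequalities that appeared as losses in the proof of Theorem~\ref{TheoremForSubordination} to become equalities, which turns the supremum on the right-hand side of~\eqref{FirstMajorization} into a single value~$\B(x_1, x_1^2+1-x_2^2)$.

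More concretely, I would first note that any~$\psi$ admissible in~\eqref{VDef} is in particular a martingale transform of a uniformly bounded~$\varphi$, so Theorem~\ref{OmegaMartTh} applies and
\eq{\notag
M_n = \big(\psi_n,\E(\psi_\infty^2\mid\F_n)\big)
}
is an~$\Omega$-martingale. The formula~\eqref{OmegaMartFormula} from~\cite{StolyarovZatitskiy2016} then gives~$\E f(\psi_\infty)=\E f(M_{\infty,1}) \leq \B(M_0) = \B(x_1, \E\psi_\infty^2)$.

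Next I would compute~$\E\psi_\infty^2$ exactly rather than estimate it. Since~$\alpha_n^2=1$ a.s., we have~$(d\psi_n)^2=(d\varphi_n)^2$ for every~$n$, so by~\eqref{MartDif} applied to both~$\psi$ and~$\varphi$,
\eq{\notag
\E(\psi_\infty-\psi_0)^2 \,=\, \E\!\sum_{k\geq 1}(d\psi_k)^2 \,=\, \E\!\sum_{k\geq 1}(d\varphi_k)^2 \,=\, \E(\varphi_\infty-\varphi_0)^2.
}
Combining this with~\eqref{VarianceFormula} and the identity~$\E\varphi_\infty^2=1$ (coming from~$|\varphi_\infty|=1$ a.s.) yields
\eq{\notag
\E\psi_\infty^2 \,=\, x_1^2 + (1-x_2^2).
}
Plugging this into the bound from the previous paragraph gives~$\E f(\psi_\infty) \leq \B(x_1,x_1^2+1-x_2^2)$; passing to the supremum over admissible~$\psi$ in~\eqref{VDef} concludes the proof.

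There is no genuine obstacle here once Theorem~\ref{OmegaMartTh} is in hand; the only point to watch is the quadratic-variation identity~$(d\psi_n)^2=(d\varphi_n)^2,$ which is where the condition~$|\alpha_n|=1$ (as opposed to merely~$|\alpha_n|\leq 1$) is essential and which turns the inequality~\eqref{L2DiffSubord} into an equality. This is what collapses the sup over~$\delta\in[0,1-x_2^2]$ in~\eqref{FirstMajorization} to the single endpoint~$\delta=1-x_2^2$.
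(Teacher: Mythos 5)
Your proof is correct and follows essentially the same route as the paper's: re-run the proof of Theorem~\ref{TheoremForSubordination} and observe that the constraints $|\alpha_n|=1$ and $|\varphi_\infty|=1$ turn the two inequalities in the chain~\eqref{L2Estimate} into equalities, so that $\E\psi_\infty^2 = x_1^2 + 1 - x_2^2$ exactly. You spell out the quadratic-variation identity $(d\psi_n)^2 = (d\varphi_n)^2$ that the paper leaves implicit behind its reference to~\eqref{MartDif}, but the argument is the same.
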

\begin{proof}
The proof is completely similar to the reasonings above, the only difference is that the inequality~\eqref{L2Estimate} turns into equality:
\eq{
\E\psi_\infty^2 = \E(\psi_\infty - \psi_0)^2 + \psi_0^2 \Eref{\scriptscriptstyle\eqref{MartDif}} \E(\varphi_\infty - \varphi_0)^2 + x_1^2 = \E\varphi_\infty^2 - x_2^2 + x_1^2 = 1-x_2^2+ x_1^2.
}
\end{proof}

\section{Simplest case: tangent domains}\label{S3}
\subsection{Tangent domain}\label{s31}
We study the simplest foliation for the function~$\B$ that consists of tangents. See~\cite{ISVZ2018} for more details. Any tangent line to the free boundary of~$\Omega$ is split by the tangency point into two `halves': the right (that lies on the right of the tangency point) and the left. A part of the tangent line that is right and lies inside~$\Omega$ is called the right tangent; similar for the left case. The right tangents foliate~$\Omega$. We consider the functions defined on a part of~$\Omega$ for which the tangents are the extremal segments. For any~$y\in\Omega$, consider the right tangent passing through~$y$. Let it intersect the fixed boundary at~$(\ur,\ur^2)$,~${\ur = \ur(y)}$. Then, its first coordinate 
may be expressed as
\eq{\label{urFormula}
\ur(y) = y_1 + 1 - \sqrt{y_1^2 + 1 - y_2},\qquad y=(y_1,y_2)\in\Omega.
}
The fact that a function~$B$ is affine along the right tangents and fulfills the boundary condition~\eqref{BCForBMO} is expressed by the formula
\eq{\label{linearity}
B(y) = m(u)(y_1 - u) + f(u), 
}
where~$u=\ur(y)$ and~$m = \Mrt\colon \R\to \R$ is an arbitrary function. We cite Proposition~$3.2.1$ in~\cite{ISVZ2018} with a slight change of notation; in the proposition below,~$u_1$ and~$u_2$ may be infinite. See also Fig.~\ref{TangHorRight} below for the visualization of the symmetric case of left tangents.
\begin{St}[Proposition~$3.2.1$ in~\cite{ISVZ2018}]\label{321ISVZ}
Suppose~$m$ satisfies the differential equation
\begin{equation}\label{difeq}
m'(u) + m(u) - f'(u) = 0
\end{equation}
and the inequality~$m''(u) \leq 0$ for~$u \in [u_1,u_2]$. Then the function~$B$ given by~\eqref{linearity} is a locally concave solution to the homogeneous Monge--Amp\`ere equation~\eqref{MongeAmpere} on the domain
\begin{equation*}
\Rt(u_1,u_2) = \{y \in \Omega \mid \ur(y) \in [u_1,u_2]\}.
\end{equation*}
\end{St}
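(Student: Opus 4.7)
\medskip
\noindent\textbf{Proof plan.} The strategy is to compute the Hessian of $B$ in closed form and to recognize it as a rank-one symmetric matrix whose sign is controlled by $m''(u)$; the Monge--Amp\`ere identity and the local concavity will then drop out together. Geometrically, $B$ is by construction affine along every right tangent (the level curves of $u = \ur$), so its graph is ruled and the determinant in~\eqref{MongeAmpere} is forced to vanish for free; the substance of the proposition is the sign of the Hessian in the direction transverse to the foliation.

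First I would record the partial derivatives of $u = \ur(y)$ from~\eqref{urFormula}. Writing $s := \sqrt{y_1^2 + 1 - y_2}$ so that $u = y_1 + 1 - s$, a direct computation gives $u_{y_2} = 1/(2s)$ and $u_{y_1} = (s - y_1)/s = (1-u)/s$. The key algebraic identity is
\[
u_{y_1} \,=\, 2(1-u)\,u_{y_2},
\]
and it is this identity that forces the Hessian of $B$ to be rank one. A first differentiation of $B(y) = m(u)(y_1 - u) + f(u)$, together with the ODE~\eqref{difeq} rewritten as $f'(u) - m(u) = m'(u)$, makes all chain-rule contributions collapse and yields
\[
B_{y_1} \,=\, (1-u)\,m'(u) + m(u), \qquad B_{y_2} \,=\, \tfrac{1}{2}\,m'(u),
\]
both of which depend on $y$ only through $u$; this is the analytic face of the fact that $\nabla B$ is constant along each right tangent. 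Differentiating once more and applying $u_{y_1} = 2(1-u)\,u_{y_2}$, the Hessian factors as
\[
B''(y) \,=\, m''(u)\,u_{y_2}\,v v^{T}, \qquad v \,:=\, \bigl(\sqrt{2}\,(1-u),\; 1/\sqrt{2}\bigr).
\]

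This is the crux. As a rank-one symmetric matrix, $B''(y)$ has zero determinant, which is~\eqref{MongeAmpere}; and since $u_{y_2} = 1/(2s) > 0$ in the interior of $\Rt(u_1, u_2)$ and $v v^{T} \succeq 0$, the Hessian is negative semi-definite precisely when $m''(u) \leq 0$. A $C^2$ function with negative semi-definite Hessian on an open planar set is concave on every convex subset, and continuity of $B$ extends this conclusion to convex subsets of the closed region $\Rt(u_1,u_2)$; this is exactly the definition of local concavity used throughout the paper. The main obstacle is just careful bookkeeping through the two differentiation steps; the only mildly nontrivial moment is the rank-one factorization via $u_{y_1} = 2(1-u)\,u_{y_2}$, where the geometry of the tangent foliation encoded in~\eqref{urFormula} enters algebraically.
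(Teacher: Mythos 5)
Your computation is correct. I verified the identities: with $s=\sqrt{y_1^2+1-y_2}$ one has $y_1-u+1=s$, $1-u=s-y_1$, $u_{y_2}=1/(2s)$, $u_{y_1}=(1-u)/s=2(1-u)u_{y_2}$; using the ODE to rewrite $f'(u)-m(u)=m'(u)$, the gradient collapses to $B_{y_1}=(1-u)m'(u)+m(u)$, $B_{y_2}=\tfrac12 m'(u)$, and a second differentiation gives exactly the rank-one Hessian $m''(u)\,u_{y_2}\,vv^T$ with your $v$, from which both the Monge--Amp\`ere identity and the sign condition follow. The paper itself does not prove this proposition --- it is imported verbatim as Proposition~$3.2.1$ from~\cite{ISVZ2018} --- so there is no in-paper argument to compare against, but your self-contained derivation is a valid proof; the rank-one factorization through $u_{y_1}=2(1-u)u_{y_2}$ is the clean way to see it. Two small points worth stating explicitly if you write this up: the argument tacitly assumes $m\in C^2$ so that $B\in C^2$ in the interior (the hypothesis $m''\le 0$ presupposes $m''$ exists, so this is consistent but should be said), and $u_{y_2}=1/(2s)$ blows up on the free boundary $s=0$, so the Hessian computation is carried out only in the interior and concavity is then extended to closed convex subsets of $\Rt(u_1,u_2)$ by the continuity of $B$, as you note.
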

It is important that the function~$B$ is defined by~\eqref{linearity} and~\eqref{difeq} only up to a constant function: we are free to choose the initial values for the differential equation~\eqref{difeq}. Usually, we will be specifying~$m(u_1)$. 

\begin{St}\label{HorizontalFurRight}
Assume~$m''(v) \leq 0$ for any~$v\in [v_1,v_2]$ and the function~$m$ satisfies~\eqref{difeq}. 
Then\textup, the function~$V$ given by
\eq{\label{BFRT}
V(x_1,x_2) = m(v)(x_1-v) + f(v) ,\quad \text{where}\quad v(x)=\vr(x) = x_1 + 1 - |x_2|,
}
is a diagonally concave solution to the equation~\eqref{StrangeEquation} on the domain
\eq{
\SRt(v_1,v_2) = \set{x\in \Sigma}{\vr(x)\in [v_1,v_2]}.
}
\end{St}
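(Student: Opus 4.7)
The plan is to parallel the proof of Proposition~\ref{321ISVZ}, using the observation that the level sets of~$\vr(x)=x_1+1-|x_2|$ are half-lines pointing along~$(1,1)$ on the upper half~$\{x_2>0\}$ and along~$(1,-1)$ on the lower half~$\{x_2<0\}$. Since~$x_1-\vr=|x_2|-1$, the formula~\eqref{BFRT} writes~$V$ as a function of~$\vr$ alone,~$V(x)=m(\vr(x))(|x_2|-1)+f(\vr(x))$, and it follows automatically that~$V$ is affine along~$(1,1)$ on the upper half and along~$(1,-1)$ on the lower half. Thus one side of~\eqref{StrangeEquation} and one of the two diagonal concavity conditions are trivially satisfied on each half; the real content lies in the analogue for the other diagonal and in the gluing across~$\{x_2=0\}$.

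First I would perform the Hessian computation on~$\{x_2>0\}$, where~$v=x_1+1-x_2$. Using the ODE~$f'=m'+m$, routine differentiation yields
\[
\partial_{x_1}V=m'(v)x_2+m(v),\qquad \partial_{x_2}V=-m'(v)x_2,
\]
whence~$\partial_{x_1x_1}V=m''(v)x_2+m'(v)$,~$\partial_{x_1x_2}V=-m''(v)x_2$, and~$\partial_{x_2x_2}V=m''(v)x_2-m'(v)$. Reading off,~$\partial_{x_1x_1}V+\partial_{x_2x_2}V=2m''(v)x_2=-2\partial_{x_1x_2}V$, so~\eqref{StrangeEquation} holds. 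Moreover~$(\partial_{x_1}+\partial_{x_2})^2V=0$ (affineness along~$(1,1)$) and~$(\partial_{x_1}-\partial_{x_2})^2V=4m''(v)x_2\le 0$ by the hypothesis~$m''\le 0$ combined with~$x_2>0$. An analogous computation on~$\{x_2<0\}$ with~$v=x_1+1+x_2$ gives~$(\partial_{x_1}-\partial_{x_2})^2V=0$ and~$(\partial_{x_1}+\partial_{x_2})^2V=-4m''(v)x_2\le 0$.

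To globalize these one-sided estimates to diagonal concavity on~$\SRt(v_1,v_2)$, I would verify that~$V$ is~$C^1$ across~$\{x_2=0\}$. Both halves combine into~$\partial_{x_1}V=m'(v)|x_2|+m(v)$ and~$\partial_{x_2}V=-m'(v)x_2$, which are continuous there because~$v=x_1+1-|x_2|$ is continuous and the prefactor of the second expression vanishes on the axis. Then the restriction of~$V$ to any line of direction~$(1,\pm 1)$ is a concatenation of two concave pieces with matching one-sided derivatives at the crossing, hence remains concave. The only genuine obstacle is this bookkeeping across~$\{x_2=0\}$: on each half only \emph{one} of the two diagonals carries nontrivial concavity, the other being merely affine, so along each diagonal line through the axis one must check that the concave portion on one side and the affine portion on the other glue~$C^1$-smoothly; once this is in place the assumption~$m''\le 0$ inherited from Proposition~\ref{321ISVZ} is exactly what is required.
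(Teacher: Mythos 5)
Your proposal is correct and follows essentially the same route as the paper's proof: compute the partial derivatives using the ODE~$f'=m'+m$, observe that~$\partial_{x_2}V=-m'(v)x_2$ vanishes on the axis so that~$V$ is~$C^1$ across~$\{x_2=0\}$, and then verify that on each half-strip the nontrivial diagonal second derivative reduces to~$\pm 4m''(v)x_2$, which is~$\leq 0$ by the sign of~$x_2$ and the hypothesis~$m''\leq 0$. The only cosmetic differences are that the paper exploits symmetry to work on~$\{x_2<0\}$ alone and records the second derivative for a general direction~$(\beta,1)$ rather than just the two diagonals, whereas you compute both halves explicitly; these are equivalent.
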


We see that the function~$V$ is affine on segments of direction~$(1,1)$ when~$x_2 > 0$ and affine on segments of direction~$(1,-1)$ when~$x_2 < 0$. We will call this foliation in~$\Sigma$ the \emph{right horizontal herringbone}. The left horizontal herringbone is symmetric. It is described in Proposition~\ref{HorizontalFurLift} below; see Fig.~\ref{TangHorRight} for visualization.

\begin{proof}
We write~$v=\vr$ for brevity, compute the derivative of~$V$ w.r.t.~$x_2$,
\eq{\label{Vx2}
\frac{\partial V}{\partial x_2} = \Bigg\{\begin{aligned}
-f'(v) - m'(v)(x_2-1) + m(v),\quad &x_2 \geq 0\\
f'(v) - m'(v)(x_2+1) - m(v),\quad &x_2 < 0
\end{aligned}\Bigg\}
\ \Eeqref{difeq} -m'(v)x_2
}
and realize that~$V\in C^1$. By symmetry and~$C^1$-smoothness, it suffices to verify the diagonal concavity on the part of~$\SRt(v_1,v_2)$ where~$x_2 < 0$. On that domain
\alg{
\label{Vx1}\frac{\partial V}{\partial x_1} &= f'(v) - (x_2+1)m'(v) \Eeqref{difeq} m(v) - m'(v) x_2;\\
\frac{\partial^2 V}{\partial x_1^2} &= m'(v) - m''(v) x_2;\\
\frac{\partial^2 V}{\partial x_1\partial x_2} &= - m''(v)x_2;\\
\frac{\partial^2 V}{\partial x_2^2} &= -m'(v) - m''(v)x_2.
}
Therefore, the second derivative of~$V$ in the direction~$(\beta,1)$ is equal to
\eq{\label{SecondDerivatieveBeta}
-(1+\beta)^2 x_2 m''(v) + (\beta^2-1)m'(v).
}
We see that the concavity in the direction~$(1,1)$ holds exactly when~$m''(v) \leq 0$ for every~$v\in [v_1,v_2]$ (recall~$x_2 < 0$). This means~$V$ is diagonally concave.
\end{proof}

We state a symmetric proposition for the left tangents. See Fig.~\ref{TangHorRight} for visualization of Proposition~\ref{HorizontalFurLift}. Now let
\eq{\label{ulFormula}
\ul(y) = y_1 - 1 + \sqrt{y_1^2 + 1 - y_2},\qquad y=(y_1,y_2)\in\Omega.
}

\begin{St}[Proposition~$3.2.2$ in~\cite{ISVZ2018}]\label{322ISVZ}
Suppose that~$m$ satisfies the differential equation
\begin{equation}\label{difeq2}
-m'(u) + m(u) - f'(u) = 0
\end{equation}
and the inequality~$m''(u) \geq 0$ for~$u \in [u_1,u_2]$. Then\textup,~$B$ given by~\eqref{linearity} with~$u=\ul$ is 
a locally concave solution to the homogeneous Monge--Amp\`ere equation~\eqref{MongeAmpere} on the domain
\begin{equation*}
\Lt(u_1,u_2) = \{y \in \Omega\mid \ul(y) \in [u_1,u_2]\}.
\end{equation*}
\end{St}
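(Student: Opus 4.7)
The statement is the mirror image of Proposition~\ref{321ISVZ} under the reflection $\sigma(y_1,y_2) = (-y_1,y_2)$, and the plan is to deduce it from Proposition~\ref{321ISVZ} rather than redo the Hessian computation.

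The first step is to record the identity $\ur(\sigma(y)) = -\ul(y)$, which follows at once from comparing~\eqref{urFormula} and~\eqref{ulFormula}. Then I introduce the reflected data $\tilde f(v) = f(-v)$ and $\tilde m(v) = -m(-v)$; differentiating gives $\tilde f'(v) = -f'(-v)$, $\tilde m'(v) = m'(-v)$, and $\tilde m''(v) = -m''(-v)$. With these conventions the equation $-m'(u) + m(u) - f'(u) = 0$ for $u \in [u_1,u_2]$ is equivalent, after the change of variable $v = -u$, to the equation $\tilde m'(v) + \tilde m(v) - \tilde f'(v) = 0$ for $v \in [-u_2,-u_1]$; and the hypothesis $m''(u) \geq 0$ on $[u_1,u_2]$ becomes $\tilde m''(v) \leq 0$ on $[-u_2,-u_1]$. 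Thus $\tilde f$ and $\tilde m$ fulfill the hypotheses of Proposition~\ref{321ISVZ} on $[-u_2,-u_1]$.

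Applying Proposition~\ref{321ISVZ} to $\tilde f$ and $\tilde m$, the function
\[
\tilde B(y) = \tilde m(\ur(y))(y_1 - \ur(y)) + \tilde f(\ur(y))
\]
is locally concave on $\Rt(-u_2,-u_1)$ and satisfies~\eqref{MongeAmpere} there. Plugging in the identity $\ur(\sigma(y)) = -\ul(y)$ and the definitions of $\tilde f, \tilde m$, a direct substitution yields $\tilde B(\sigma(y)) = B(y)$. Since $\sigma$ is an affine isometry of $\R^2$ that preserves $\Omega$ and maps $\Lt(u_1,u_2)$ onto $\Rt(-u_2,-u_1)$, both local concavity and the vanishing of the Monge--Amp\`ere determinant transfer from $\tilde B$ to $B$.

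The only real obstacle is sign bookkeeping: one must verify carefully that the two sign flips, in the ODE and in the inequality on $m''$, line up correctly under $v = -u$. An alternative direct route would use the parametrization $s = \sqrt{y_1^2+1-y_2}$, $u = y_1 - 1 + s$, so that $y_1 - u = 1 - s$ and $B = m(u)(1-s) + f(u)$. Using the ODE $-m'+m-f'=0$ to cancel the coefficient of $u_{y_i}$ in $\partial B$, one finds $\partial B/\partial y_2 = m'(u)/2$ and a rank-one Hessian whose diagonal entries are proportional to $-m''(u)$; the Monge--Amp\`ere equation and the equivalence between concavity and $m''(u) \geq 0$ then both fall out at once. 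Either route reaches the conclusion.
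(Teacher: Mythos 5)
Your reflection argument is correct. The identity $\ur(\sigma(y)) = -\ul(y)$ checks out directly from~\eqref{urFormula} and~\eqref{ulFormula}; the substitution $\tilde m'(v) + \tilde m(v) - \tilde f'(v) = m'(-v) - m(-v) + f'(-v) = -\bigl(-m'(u) + m(u) - f'(u)\bigr)\big|_{u=-v} = 0$ confirms that~\eqref{difeq2} becomes~\eqref{difeq} under the reflection; and the sign flip $\tilde m'' = -m''(-\cdot)$ converts $m'' \geq 0$ into the hypothesis $\tilde m'' \leq 0$ of Proposition~\ref{321ISVZ}. The verification that $\tilde B \circ \sigma = B$ and that $\sigma$ carries $\Lt(u_1,u_2)$ onto $\Rt(-u_2,-u_1)$ is also right, and since $\sigma$ is an affine bijection of $\Omega$, local concavity and the degeneracy of the Hessian are both invariant.

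This proposition is cited by the present paper from~\cite{ISVZ2018} without proof, so there is no in-text argument to compare against; the proof in the cited source is a direct Hessian computation parallel to that of Proposition~\ref{321ISVZ}. Your symmetry reduction is a cleaner route and is very much in the spirit of how this paper itself handles left/right mirror cases (e.g., Proposition~\ref{HorizontalFurRight}, where the proof invokes symmetry to treat only $x_2 < 0$). The alternative direct computation you sketch in the last paragraph is also essentially correct: with $s = \sqrt{y_1^2+1-y_2}$ and $u = \ul(y)$, using the ODE one finds $\partial B/\partial y_2 = m'(u)/2$, and the Hessian is the rank-one matrix $-\dfrac{m''(u)}{s}\begin{pmatrix}(s+y_1)^2 & -\tfrac12(s+y_1)\\ -\tfrac12(s+y_1) & \tfrac14\end{pmatrix}$, whose determinant vanishes identically and which is negative semidefinite precisely when $m''(u) \geq 0$ (for $s>0$, i.e., in the interior of $\Omega$). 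Either route is valid; the reflection is less prone to sign slips.

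One small point worth making explicit: the reflection $\sigma$ has Jacobian determinant $-1$ but its square is the identity, so the pushforward Hessian at $y$ is $\sigma^{\mathsf T}\,(\mathrm{Hess}\,\tilde B)(\sigma(y))\,\sigma$; negative semidefiniteness and vanishing determinant of a symmetric matrix are preserved under congruence by any invertible matrix, which is what justifies the transfer. You gesture at this with ``affine isometry,'' which is enough.
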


\begin{St}\label{HorizontalFurLift}
Assume~$m''(v) \geq 0$ for any~$v\in [v_1,v_2]$ and the function~$m$ satisfies~\eqref{difeq2}. Then\textup, the function~$V$ given by
\eq{\label{BFLT}
V(x_1,x_2) = m(v)(x_1-v) + f(v) ,\quad \text{where}\quad v(x)=\vl(x) = x_1 -1 + |x_2|,
}
is a diagonally concave solution to the equation~\eqref{StrangeEquation} on the domain
\eq{
\SLt(v_1,v_2) = \set{x\in \Sigma}{\vl(x)\in [v_1,v_2]}.
}
\end{St}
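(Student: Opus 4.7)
The plan is to mirror the proof of Proposition~\ref{HorizontalFurRight}, with sign changes dictated by the differential equation~\eqref{difeq2} replacing~\eqref{difeq} and by the new expression~$v=\vl(x)=x_1-1+|x_2|$. First I would compute the partial derivatives of~$V$ on each of the half-strips~$\{x_2>0\}$ and~$\{x_2<0\}$ via the chain rule, and then substitute the relation~$f'(v)=m(v)-m'(v)$ obtained from~\eqref{difeq2}. On both halves, the expression for~$\partial V/\partial x_2$ collapses to~$-m'(v)\,x_2$, and the two one-sided values of~$\partial V/\partial x_1$ agree at~$x_2=0$ (both equal~$m(v)$). This gives~$V\in C^{1}(\SLt(v_1,v_2))$.

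By the~$C^1$-smoothness and the symmetry~$x_2\mapsto -x_2$ of the defining data (since~$\vl$ depends on~$|x_2|$), it suffices to check diagonal concavity on the region~$x_2<0$. There the computation of the Hessian is straightforward and I would obtain a formula for the second directional derivative of~$V$ in the direction~$(\beta,1)$ analogous to~\eqref{SecondDerivatieveBeta}; the key point is that the quadratic in~$\beta$ now takes the form~$m'(v)(\beta^2-1)+m''(v)\,x_2\,(\beta-1)^2$ (rather than~$(1+\beta)^2$ multiplying~$m''$), which vanishes at~$\beta=1$ and equals~$4m''(v)\,x_2$ at~$\beta=-1$. Since~$x_2<0$ and~$m''(v)\ge 0$ by hypothesis, this quantity is~$\le 0$, giving diagonal concavity. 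The equation~\eqref{StrangeEquation} is automatic because the function is affine along one of the diagonal directions in each half-strip.

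As a sanity cross-check I would also verify the computation through a symmetry argument: the reflection~$x_1\mapsto -x_1$ sends~$\vl(x)$ to~$-\vr(-x_1,x_2)$, and converts~\eqref{difeq2} for~$m$ into~\eqref{difeq} for the function~$\tilde m(u)=-m(-u)$ with cost~$\tilde f(u)=f(-u)$; the sign change under~$m\mapsto\tilde m$ exchanges the conditions~$m''\ge 0$ and~$\tilde m''\le 0$. Thus Proposition~\ref{HorizontalFurLift} is literally obtained from Proposition~\ref{HorizontalFurRight} by this change of variable, and the concavity conclusion transfers.

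The only real obstacle is bookkeeping of signs, in particular making sure that the change from~$(1+\beta)^2$ to~$(\beta-1)^2$ in the coefficient of~$m''$ is correct, so that the direction along which nontrivial concavity is tested becomes~$(1,-1)$ instead of~$(1,1)$, and so that~$m''\ge 0$ (not~$\le 0$) yields the desired inequality when multiplied by a negative~$x_2$. Using the symmetry of the previous paragraph essentially eliminates this risk.
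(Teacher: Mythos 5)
Your proposal is correct and takes essentially the same approach as the paper: the paper proves Proposition~\ref{HorizontalFurRight} in detail and simply states Proposition~\ref{HorizontalFurLift} as its symmetric counterpart, leaving the sign-flipped computation (or, equivalently, the reduction via $x_1\mapsto -x_1$, $\tilde m(u)=-m(-u)$, $\tilde f(u)=f(-u)$) to the reader. Your direct verification — in particular that $\partial V/\partial x_2=-m'(v)x_2$ on both half-strips (giving $C^1$) and that the second directional derivative in direction $(\beta,1)$ on $\{x_2<0\}$ equals $m'(v)(\beta^2-1)+m''(v)\,x_2\,(\beta-1)^2$ — is accurate and matches what the symmetry argument predicts.
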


\begin{figure}[h!]
\includegraphics[height=4.5cm]{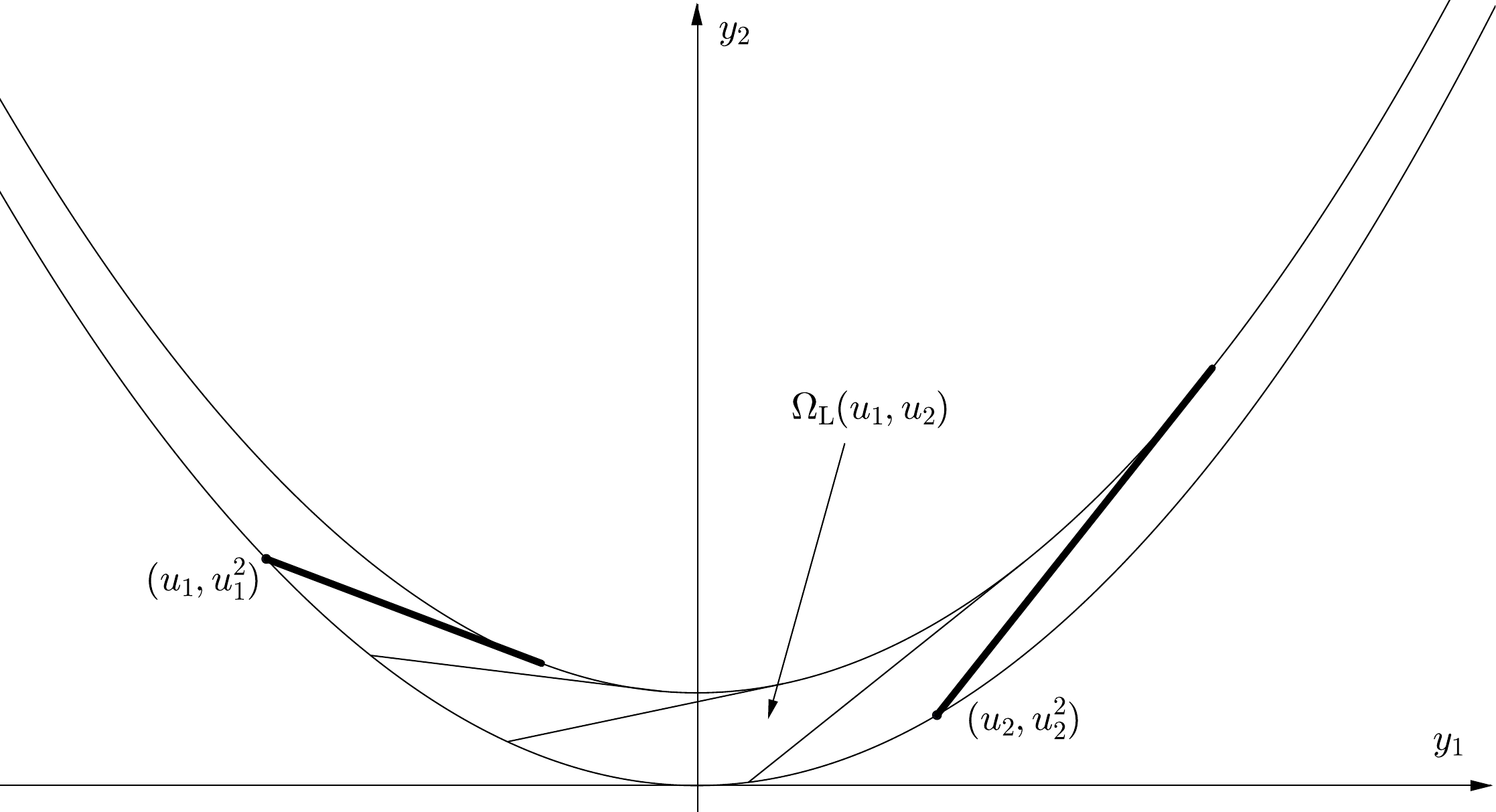}
\includegraphics[height=4.5cm]{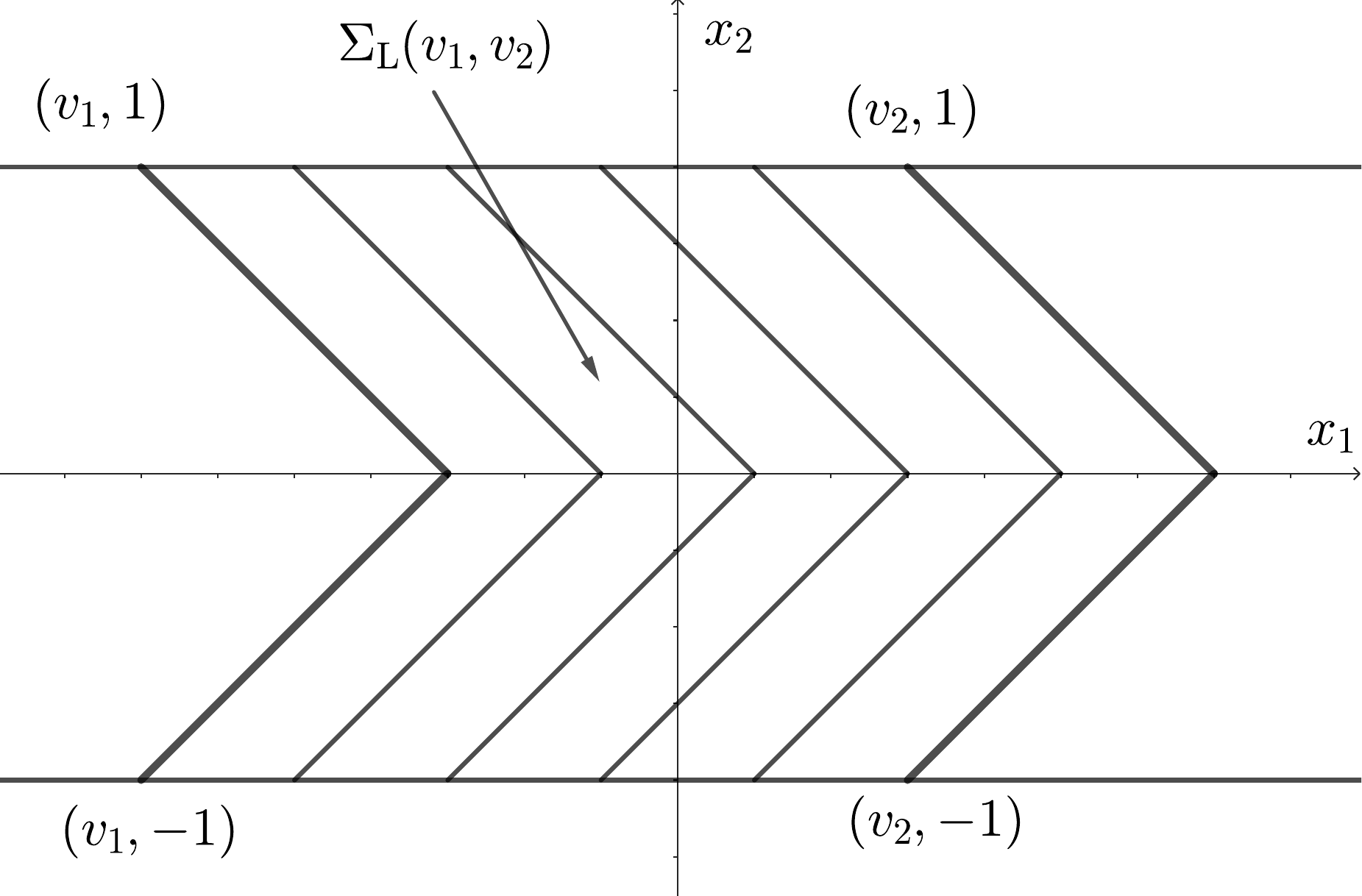}
\caption{A tangent domain and the corresponding foliation for diagonal concavity}
\label{TangHorRight}
\end{figure}

The differential equations~\eqref{difeq} and~\eqref{difeq2} have one-parameter families of solutions~$m$ for a given~$f$. Namely, a solution of~\eqref{difeq} is given by
\begin{equation}\label{ExplicitFormulaForm}
m(u) = e^{-u}\bigg(e^{u_1}m(u_1) + \int\limits_{u_1}^u f'(t)e^{t}\,dt\bigg),\qquad u \in [u_1,u_2].
\end{equation}
The inequality~$m''(u) \leq 0$ can be rewritten as
\begin{equation}\label{mr''_firstformula}
m''(u) = e^{(u_1-u)}m''(u_1) + 
e^{-u}\int\limits_{u_1}^u e^{t}\,df''(t) \leq 0.
\end{equation}

Similarly, a solution of~\eqref{difeq2} is given by
\begin{equation}\label{ExplicitFormulaForm2}
  	m(u) = e^{u}\bigg(e^{-u_2}m(u_2) + \int\limits_{u}^{u_2} f'(t)e^{-t}\,dt\bigg),\qquad u\in [u_1,u_2].
  \end{equation}
The inequality~$m''(u) \geq 0$ in this situation is
\begin{equation*}
  	m''(u) = e^{(u-u_2)}m''(u_2) + e^{u}\int\limits_{u}^{u_2} e^{-t}\,df''(t) \geq 0.
  \end{equation*}

\subsection{Infinite tangent domains}\label{s32}
We consider the case~$u_1 = -\infty$ for the right tangents separately. 

\begin{St}[Proposition~$3.2.5$ in~\cite{ISVZ2018}]\label{RightTangentsCandidateInfty}
Suppose that~$m$ is given by the formula
\begin{equation}\label{minfty}
m(u) = e^{-u}\int\limits_{-\infty}^u f'(t)e^{t}\,dt,
\end{equation}
and the inequality~$m''(u) \leq 0$ is fulfilled for~$u \in (-\infty,u_2)$. Then\textup, the function~$B$ given by~\eqref{linearity} with~$u=\ur$ is a locally concave solution to~\eqref{MongeAmpere} on~$\Rt(-\infty,u_2)$.
\end{St}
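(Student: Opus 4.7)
The plan is to deduce the statement from the finite-case Proposition~\ref{321ISVZ} by exhausting the unbounded domain $\Rt(-\infty, u_2)$ by the subdomains $\Rt(u_1, u_2)$ as $u_1 \to -\infty$.

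First, I would verify directly that the function $m$ defined by~\eqref{minfty} satisfies the differential equation~\eqref{difeq}. Differentiating under the integral sign gives
\[
m'(u) = -e^{-u}\int_{-\infty}^u f'(t)e^{t}\,dt + f'(u) = -m(u) + f'(u),
\]
so $m' + m - f' = 0$ on $(-\infty, u_2)$. Implicit in the formulation is the absolute convergence of the integral in~\eqref{minfty}, without which $m$ is ill-defined; no further care is needed at this step.

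Second, for any finite $u_1\in(-\infty, u_2)$, the restriction of $m$ to $[u_1,u_2]$ is precisely the solution produced by the explicit formula~\eqref{ExplicitFormulaForm} with initial value $m(u_1) = e^{-u_1}\int_{-\infty}^{u_1} f'(t)e^{t}\,dt$, since then $e^{u_1}m(u_1) + \int_{u_1}^u f'(t)e^{t}\,dt = \int_{-\infty}^u f'(t)e^{t}\,dt$. The hypothesis $m''(u)\le 0$ on $(-\infty, u_2)$ automatically restricts to the same inequality on $[u_1, u_2]$. Proposition~\ref{321ISVZ} then asserts that the function $B$ produced by~\eqref{linearity} with $u = \ur$ is locally concave and satisfies~\eqref{MongeAmpere} on $\Rt(u_1, u_2)$. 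Crucially, since we use the same $m$ for every choice of $u_1$, these local Bellman candidates glue into one well-defined function $B$ on $\Rt(-\infty, u_2)$.

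Third, I would pass to the full domain via $\Rt(-\infty, u_2) = \bigcup_{u_1 < u_2} \Rt(u_1, u_2)$. The Monge--Amp\`ere equation~\eqref{MongeAmpere} is pointwise, hence holds on the union. For local concavity, I would test concavity of $B$ along an arbitrary line segment $[y,y'] \subset \Rt(-\infty, u_2)$: since $\ur$ is continuous and the segment is compact, $\ur$ is bounded on it, and therefore $[y,y'] \subset \Rt(u_1, u_2)$ for some sufficiently small finite $u_1$, where concavity has already been established.

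I do not foresee a substantive obstacle. The only genuinely conceptual point is the observation that local concavity on an unbounded exhaustible domain is inherited from the exhausting family, because concavity is a one-dimensional condition verifiable along compact segments. Everything else is the algebraic recognition of~\eqref{minfty} as the limiting case of~\eqref{ExplicitFormulaForm} and a direct invocation of Proposition~\ref{321ISVZ}.
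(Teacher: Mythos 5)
Your proposal is correct and matches the implicit strategy of the paper, which cites Proposition~$3.2.5$ of \cite{ISVZ2018} and remarks nearby that the infinite-domain statements reduce to the finite ones once~$m$ in~\eqref{minfty} is recognized as a solution of~\eqref{difeq}. Your explicit exhaustion argument---recognizing~\eqref{minfty} as the $u_1\to-\infty$ limit of~\eqref{ExplicitFormulaForm}, invoking Proposition~\ref{321ISVZ} on each $\Rt(u_1,u_2)$, and then noting that any compact segment in $\Rt(-\infty,u_2)$ lies in some $\Rt(u_1,u_2)$ so local concavity is inherited---is precisely the right way to make this reduction rigorous.
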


The Conditions~\ref{reg} and~\ref{sum} guarantee that the integral~\eqref{minfty} is absolutely convergent and~$m$ defined in this way satisfies~\eqref{difeq}, see Lemma~$2.1.13$ in~\cite{ISVZ2018}. This also reduces the proposition below to Proposition~\ref{HorizontalFurRight}.

\begin{St}\label{RightFurInfty}
Suppose that~$m$ given by~\eqref{minfty} fulfills the inequality~$m''(v) \leq 0$ for~$v\in (-\infty,v_2)$. Then\textup, the function~$V$ given by~\eqref{BFRT} is a diagonally concave solution to~\eqref{StrangeEquation} on~$\SRt(-\infty,v_2)$.
\end{St}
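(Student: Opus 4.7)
The plan is straightforward: reduce this to Proposition~\ref{HorizontalFurRight} by an exhaustion argument, using the cited technical input as a black box.

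First, I would invoke Lemma~$2.1.13$ of~\cite{ISVZ2018}, which under Conditions~\ref{reg} and~\ref{sum} guarantees that the improper integral defining $m$ in~\eqref{minfty} converges absolutely, that $m$ is $C^2$ on $(-\infty, v_2)$, and that it satisfies the ODE~\eqref{difeq} there. This is exactly the remark made in the paragraph preceding the statement and is the only non-routine input needed.

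Second, fix any finite $v_1 < v_2$. The restriction $m|_{[v_1, v_2]}$ satisfies the hypotheses of Proposition~\ref{HorizontalFurRight}: it solves~\eqref{difeq} and its second derivative is nonpositive on $[v_1, v_2]$. Hence $V$, defined by the same formula~\eqref{BFRT} from this $m$, is diagonally concave and solves~\eqref{StrangeEquation} on $\SRt(v_1, v_2)$. Note that in the proof of Proposition~\ref{HorizontalFurRight} only the pointwise identities~\eqref{difeq}, the smoothness of $m$, and the sign condition on $m''$ are used, so the conclusion of that proof (concavity in every direction $(\beta,1)$ with $|\beta|\leq 1$ via the expression~\eqref{SecondDerivatieveBeta}) is unaffected by the unboundedness of the parameter interval.

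Third, I would extend to the full domain $\SRt(-\infty, v_2) = \bigcup_{v_1\to-\infty}\SRt(v_1, v_2)$. The function $V$ agrees on all these nested subdomains, because $m$ is a single globally defined function. The only thing to check is that diagonal concavity, being a statement about restrictions to diagonal segments, transfers to the union. But any closed diagonal segment contained in $\SRt(-\infty, v_2)$ is compact, and $\vr$ is continuous on~$\Sigma$, so such a segment lies in $\SRt(v_1, v_2)$ for some finite $v_1$; the concavity along it is then guaranteed by the previous step. The pointwise identity~\eqref{StrangeEquation} at any $x \in \SRt(-\infty, v_2)$ is inherited in the same way. I do not foresee any real obstacle: the only delicate step is the convergence of~\eqref{minfty} and its compatibility with~\eqref{difeq}, and this has already been settled in~\cite{ISVZ2018}.
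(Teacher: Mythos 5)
Your proposal is correct and follows exactly the route the paper itself takes: the paper's entire ``proof'' of this proposition is the remark immediately preceding it, which invokes Lemma~$2.1.13$ of~\cite{ISVZ2018} to guarantee convergence of~\eqref{minfty} and the validity of~\eqref{difeq}, thereby reducing the claim to Proposition~\ref{HorizontalFurRight}. The only thing you add is to spell out the exhaustion $\SRt(-\infty,v_2)=\bigcup_{v_1\to-\infty}\SRt(v_1,v_2)$ and the observation that diagonal concavity and the pointwise PDE are local properties, which the paper leaves implicit; this is a correct and harmless elaboration rather than a different argument.
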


We state symmetric proposition for left tangents.

\begin{St}[Proposition~$3.2.6$ in~\cite{ISVZ2018}]\label{LeftTangentsCandidateInfty}
Suppose that~$m$ is given by the formula
\begin{equation}\label{minfty2}
m(u) = e^{u}\int\limits_u^{+\infty} f'(t)e^{-t}\,dt,
\end{equation}
and the inequality~$m''(u) \geq 0$ is fulfilled for~$u \in (u_1,+\infty)$. Then the function~$B$ given by formula~\eqref{linearity} with~$u=\ul$ is a locally concave solution to~\eqref{MongeAmpere} on~$\Lt(u_1,+\infty)$.
\end{St}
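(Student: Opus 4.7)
The plan is to reduce the statement to its finite-interval analog, Proposition~\ref{322ISVZ}, by exhausting $\Lt(u_1,+\infty)$ from within. First, I would verify that under Conditions~\ref{reg} and~\ref{sum} the integral in~\eqref{minfty2} converges absolutely and that differentiation under the integral sign is justified. This is the mirror of the remark preceding Proposition~\ref{RightFurInfty}, and rests on (a symmetric reformulation of) Lemma~$2.1.13$ of~\cite{ISVZ2018}: after integration by parts against $e^{-t}$, the tail of $f'(t)e^{-t}$ at $+\infty$ is controlled by the absolute convergence of $\int w_\eps(t)\,df''(t)$. A direct differentiation of~\eqref{minfty2} then yields
\[
m'(u)=e^u\!\!\int_u^{+\infty}\!\! f'(t)e^{-t}\,dt-f'(u)=m(u)-f'(u),
\]
which is exactly~\eqref{difeq2}.

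Second, for an arbitrary finite $u_2>u_1$, the formula~\eqref{minfty2} coincides with~\eqref{ExplicitFormulaForm2} on $[u_1,u_2]$ once we use~\eqref{minfty2} itself to prescribe the boundary value $m(u_2)$. Combined with the standing hypothesis $m''\geq 0$ on $(u_1,+\infty)$, Proposition~\ref{322ISVZ} applied to $[u_1,u_2]$ gives that the function $B$ defined by~\eqref{linearity} with $u=\ul$ is locally concave on $\Lt(u_1,u_2)$ and solves~\eqref{MongeAmpere} wherever it is twice differentiable.

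Finally, I would pass to the full domain via $\Lt(u_1,+\infty)=\bigcup_{u_2>u_1}\Lt(u_1,u_2)$. The Monge--Amp\`ere equation is a pointwise condition, so it persists on the union. For local concavity it suffices to verify concavity of $B$ along every line segment $\ell\subset\Lt(u_1,+\infty)$; since the map $\ul$ is continuous and $\ell$ is compact, the image $\ul(\ell)$ is a compact subset of $(u_1,+\infty)$, so $\ell\subset\Lt(u_1,u_2)$ for some $u_2<\infty$, where concavity is already established. The main obstacle is the convergence and stability analysis in the first step: although the computation of $m'$ is formally routine, one must invoke Condition~\ref{sum} to guarantee that $f'(t)e^{-t}$ is integrable near $+\infty$ and that the boundary term $e^{-u_2}m(u_2)$ from~\eqref{ExplicitFormulaForm2} vanishes in the limit $u_2\to+\infty$, compatibly with the choice made in~\eqref{minfty2}.
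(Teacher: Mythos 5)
Your proposal is correct and consistent with how the paper treats this statement: the paper cites it as Proposition~3.2.6 of~\cite{ISVZ2018} without proof, but for the mirror Proposition~\ref{RightTangentsCandidateInfty} it explicitly remarks that Lemma~2.1.13 of~\cite{ISVZ2018} (via Conditions~\ref{reg} and~\ref{sum}) gives absolute convergence of the integral and the ODE~\eqref{difeq2}, which reduces the infinite-endpoint case to the finite one. Your exhaustion $\Lt(u_1,+\infty)=\bigcup_{u_2>u_1}\Lt(u_1,u_2)$ together with the compactness argument for line segments is a sound way to make that reduction precise (the only cosmetic slip is that $\ul(\ell)$ is compact in $[u_1,+\infty)$ rather than the open interval, which does not affect the conclusion).
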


\begin{St}\label{LeftFurInfty}
Suppose that the function~$m$ given by~\eqref{minfty2} fulfills the inequality~$m''(v) \geq 0$ for \hbox{$v\in (v_1,\infty)$.} 
Then\textup, the function~$V$ given by~\eqref{BFLT} is a diagonally concave solution to~\eqref{StrangeEquation} on~$\SLt(v_1,\infty)$.
\end{St}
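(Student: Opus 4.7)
The plan is to derive Proposition~\ref{LeftFurInfty} from Proposition~\ref{HorizontalFurLift} in exactly the same way that Proposition~\ref{RightFurInfty} was obtained from Proposition~\ref{HorizontalFurRight}, exploiting the left--right symmetry of the setup. First I would invoke Lemma~$2.1.13$ of~\cite{ISVZ2018}: under Conditions~\ref{reg} and~\ref{sum} the integral in~\eqref{minfty2} converges absolutely for every $u\in\R$, and the function $m$ it defines satisfies the differential equation~\eqref{difeq2}. This is the same observation that underlies Proposition~\ref{LeftTangentsCandidateInfty} in the locally concave setting.

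Next I would reduce Proposition~\ref{LeftFurInfty} to Proposition~\ref{HorizontalFurLift}. A careful look at the proof of Proposition~\ref{HorizontalFurRight} (whose symmetric twin yields Proposition~\ref{HorizontalFurLift}) shows that the verification of $C^1$-smoothness and of diagonal concavity of the candidate $V$ is entirely pointwise in $v$: it uses only the differential equation~\eqref{difeq2} and the sign of $m''(v)$ at each $v$. In particular, the symmetric counterparts of~\eqref{Vx2}, \eqref{Vx1} and the second directional derivative~\eqref{SecondDerivatieveBeta} depend only on $m'(v)$, $m''(v)$ and $x_2$, so the hypothesis $m''(v)\geq 0$ on the whole ray $(v_1,+\infty)$ directly gives diagonal concavity of $V$ on $\SLt(v_1,+\infty)$. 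The fact that $V$ solves~\eqref{StrangeEquation} at differentiability points is then automatic: by construction $V$ is affine with respect to $(1,1)$ on the portion $x_2>0$ and with respect to $(1,-1)$ on $x_2<0$, so Lemma~\ref{affineLemma} makes the left-hand side of~\eqref{StrangeEquation} vanish.

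The only genuine point to check, and the place where one might expect a nuisance, is that no step of the finite-range argument implicitly used the finiteness of $v_1$. Once the pointwise absolute convergence of~\eqref{minfty2} is granted by Lemma~$2.1.13$ of~\cite{ISVZ2018}, every manipulation in the proof of Proposition~\ref{HorizontalFurLift} is local in $v$, so extending the range of $v$ to $(v_1,+\infty)$ is immediate. I do not anticipate any substantive obstacle beyond this bookkeeping.
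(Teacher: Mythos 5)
Your proposal is correct and follows the paper's own route: the paper likewise reduces Proposition~\ref{LeftFurInfty} to the finite-range Proposition~\ref{HorizontalFurLift} by appealing to Lemma~$2.1.13$ of~\cite{ISVZ2018}, which gives absolute convergence of the integral in~\eqref{minfty2} and the validity of~\eqref{difeq2}, exactly as Proposition~\ref{RightFurInfty} is reduced to Proposition~\ref{HorizontalFurRight}. Two trivial slips do not affect the argument: for the left herringbone $\vl = x_1 - 1 + |x_2|$ is constant along $(1,-1)$ on $x_2>0$ and along $(1,1)$ on $x_2<0$ (you have the two directions swapped), and the vanishing of the left-hand side of~\eqref{StrangeEquation} follows directly from the second directional derivative along the foliation direction being zero, not from Lemma~\ref{affineLemma} (which concerns the transverse directional derivative being affine).
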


It will be important for us that the functions~$V$ constructed in {Propositions~\ref{RightFurInfty} and~\ref{LeftFurInfty}} satisfy the inequality
\eq{\label{BoundForMishasTheorem}
|V(x)| {\ \lesssim\ } e^{|x_1|/{\eps}}, \qquad x\in \Sigma,
}
for some~$\eps > 1$. {Indeed, since~$f$ fulfills Condition~\ref{sum}, Proposition~$2.1.12$ in~\cite{ISVZ2018} yields~$|m(u)| \lesssim e^{|u|/\eps}$ for some~$\eps > 1$, which, in its turn, implies the desired bound~\eqref{BoundForMishasTheorem} by either~\eqref{BFRT} or~\eqref{BFLT}.} Therefore, we are in position to apply Theorem~\ref{MishasTh}.

\begin{Th}\label{minftyTh}
Suppose that~$m$ given by~\eqref{minfty} fulfills the inequality~$m''(v) \leq 0$ for~$v\in \R$. Then\textup, $\VB = V$\textup, where the latter function is given by~\eqref{BFRT}.
\end{Th}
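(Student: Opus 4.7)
The plan is to apply Theorem~\ref{MishasTh} to $V$ and combine its conclusion with the obvious membership $V \in \Diagonal[f]$. Proposition~\ref{RightFurInfty}, valid on $\SRt(-\infty, v_2)$ for every $v_2 \in \R$ under the hypothesis $m''(v) \leq 0$ on $\R$, shows that $V$ is diagonally concave on the whole strip $\Sigma$ (the union of these subdomains). On $\{|x_2|=1\}$ the formula~\eqref{BFRT} gives $v=x_1$, hence $V(x_1, \pm 1) = f(x_1)$, so $V \in \Diagonal[f]$ and therefore $\VB \leq V$ immediately. The remaining hypotheses of Theorem~\ref{MishasTh} are easy: $V \in C^1(\Sigma)$ by the computation in the proof of Proposition~\ref{HorizontalFurRight}, so the discontinuity set is empty, and the growth bound~\eqref{BoundForMishasTheorem} was already noted just after Proposition~\ref{LeftFurInfty} as a consequence of Condition~\ref{sum}.

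The substantive step is the case-by-case verification at each interior point $x \in \Sigma$. The extremal segments of $V$ are the level sets of $v = \vr$: they have direction $(1,1)$ in the upper half-strip $\{x_2 > 0\}$ and direction $(1,-1)$ in the lower half-strip $\{x_2 < 0\}$. For $x = (x_1, x_2)$ with $x_2 > 0$, take $\ell$ to be the segment from $x$ to $(x_1 + 1 - x_2, 1) \in \partial \Sigma$ in direction $(1,1)$; since $v$ is constant on $\ell$, the restriction $V|_\ell$ is affine, and differentiability at $x$ in the direction $(1,1)$ is automatic from $V \in C^1$. Thus Case~2 of Theorem~\ref{MishasTh} applies. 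Case~3 is symmetric for $x_2 < 0$. For a central point $x = (x_1, 0)$, both the segment from $x$ to $(x_1 + 1, 1)$ along $(1,1)$ and the segment from $x$ to $(x_1 + 1, -1)$ along $(1,-1)$ carry constant $v = x_1 + 1$ and terminate on $\partial \Sigma$, so $x$ falls into Case~4.

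Theorem~\ref{MishasTh} then gives that any $G \in \Diagonal[f]$ satisfies $G \geq V$ pointwise (since $G|_{\partial\Sigma} \geq f = V|_{\partial\Sigma}$), so $V \leq \VG = \VB$, and the desired equality follows. The one point that needs care is the central horizontal line $\{x_2 = 0\}$ at which the foliation direction switches: even though $V$ is $C^1$ across this line, the maximal affine segment through $(x_1, 0)$ in direction $(1,1)$ terminates at $(x_1, 0)$ rather than extending to the lower half, so extremality has to be verified as a one-sided endpoint condition and Case~4 (not Case~2 or~3) is what applies there. Once the foliation is drawn, every other verification is a direct reading of the level curves of $\vr$.
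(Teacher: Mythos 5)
Your proof is correct and follows essentially the same route as the paper: the paper's own argument is just the observation that $V$ (which lies in $\Diagonal[f]$ by Proposition~\ref{RightFurInfty} and the boundary computation) satisfies the hypotheses of Theorem~\ref{MishasTh}, with a point $x$ with $x_2>0$ falling into case~2, $x_2<0$ into case~3, and $x_2=0$ into case~4. You have supplied the same case split together with the explicit extremal segments and the supporting remarks about $C^1$-smoothness and the growth bound~\eqref{BoundForMishasTheorem}, which the paper leaves implicit.
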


\begin{proof}
Our function~$V$ falls under the scope of Theorem~\ref{MishasTh} by construction: a point~$x\in \Sigma$ with~$x_2 > 0$ fulfills the second assumption of that theorem, a point with~$x_2 < 0$ fulfills the third assumption, and the points on the midline~$x_2=0$ fulfill the fourth. Thus, $\VB(x) = V(x)$ for all~$x\in \Sigma$.
\end{proof}

\begin{Th}\label{minftyTh2}
Suppose that~$m$ given by~\eqref{minfty2} fulfills the inequality~$m''(v) \geq 0$ for~$v\in \R$. Then\textup, $\VB = V$\textup, where the latter function is given by~\eqref{BFLT}.
\end{Th}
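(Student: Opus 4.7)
The plan is to mimic the proof of Theorem~\ref{minftyTh} and invoke Theorem~\ref{MishasTh}, exploiting the symmetry between the right-tangent and left-tangent constructions. First, I would recall that Proposition~\ref{LeftFurInfty} already produces the function~$V$ given by~\eqref{BFLT} as a diagonally concave solution to~\eqref{StrangeEquation} on~$\SLt(v_1,\infty)$; the hypothesis $m''(v)\geq 0$ for every $v\in\R$ extends this conclusion to the whole strip~$\Sigma$. I would also note that, exactly as in the paragraph preceding Theorem~\ref{minftyTh}, Condition~\ref{sum} together with Proposition~$2.1.12$ of~\cite{ISVZ2018} yields a bound $|m(u)|\lesssim e^{|u|/\eps}$ for some $\eps>1$, which through~\eqref{BFLT} delivers the growth estimate~\eqref{BoundForMishasTheorem} required by Theorem~\ref{MishasTh}.

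Next, I would check that every interior point $x\in\Sigma$ falls under one of the four cases of Theorem~\ref{MishasTh}. The function $V$ depends on $x$ only through $v=\vl(x)=x_1-1+|x_2|$, so it is affine along the segments on which $\vl$ is constant. When $x_2>0$ one has $v=x_1-1+x_2$, constant along the direction $(1,-1)$; the segment through $x$ in this direction extends up to the boundary component $x_2=1$, so case~(3) of Theorem~\ref{MishasTh} applies. When $x_2<0$ one has $v=x_1-1-x_2$, constant along $(1,1)$, yielding case~(2). On the midline $x_2=0$ the two diagonal segments through $x$ both reach the boundary, with $V$ affine along each, so case~(4) applies.

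With the four cases exhausted, Theorem~\ref{MishasTh} declares $V$ to be the minimal diagonally concave function for its boundary data. A direct substitution $v=\vl(x_1,\pm 1)=x_1$ into~\eqref{BFLT} gives $V(x_1,\pm 1)=f(x_1)$, so these boundary data coincide with those imposed by the class $\Diagonal[f]$. Combined with Theorem~\ref{BurkholderV}, which identifies $\VB$ with the minimal function $\VG$, this yields $\VB=V$ on all of $\Sigma$.

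No step of this plan constitutes a real obstacle, since everything is a mirror of the right-tangent argument. The only point requiring mild vigilance is the bookkeeping of which diagonal direction is which: the map $\vl$ is constant along $(1,-1)$ on the upper half of $\Sigma$ and along $(1,1)$ on the lower half, exactly opposite to $\vr$, so cases~(2) and~(3) of Theorem~\ref{MishasTh} swap roles when compared with the proof of Theorem~\ref{minftyTh}.
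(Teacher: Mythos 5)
Your proposal is correct and takes the same approach as the paper, which proves Theorem~\ref{minftyTh} explicitly and treats Theorem~\ref{minftyTh2} as its mirror image under $(x_1,x_2)\mapsto(x_1,-x_2)$ and left/right swap. You correctly track the growth bound~\eqref{BoundForMishasTheorem}, observe that the roles of cases~(2) and~(3) of Theorem~\ref{MishasTh} interchange (since $\vl$ is constant along $(1,-1)$ on the upper half-strip and along $(1,1)$ on the lower half-strip, opposite to $\vr$), and close via Theorem~\ref{BurkholderV}.
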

In the case of Theorem~\ref{minftyTh} the inequality~$m''(v) \leq 0$ turns into
\begin{equation*}
m''(v) =  e^{-v}\int\limits_{-\infty}^v e^{t}\,df''(t) \leq 0.
\end{equation*}
In the case of Theorem~\ref{minftyTh2} the inequality~$m''(v) \geq 0$ turns into
\begin{equation*}
m''(v) = e^{v}\int\limits_{v}^{+\infty} e^{-t}\,df''(t) \geq 0.
\end{equation*}
\begin{Rem}\label{RemarkTangents}
As we see, the statement of Theorem~\textup{\ref{TheoremForDiagonal}} holds true in the case where one of 
Theorems~\textup{\ref{minftyTh}} and~\textup{\ref{minftyTh2}} is applicable. Consider\textup, for example\textup, 
the case of right tangents and right horizontal herringbone. Theorem~$3.9$ in~\textup{\cite{IOSVZ2016}} says that if 
the assumptions of Theorem~\textup{\ref{RightTangentsCandidateInfty}} are satisfies with~$u_2=\infty$\textup, 
then the function~$B$ constructed in this theorem coincides with~$\B$. Therefore\textup,
\eq{
\VB(x_1,0) \Eeqref{BFRT} m(x_1+1) + f(x_1 + 1) \Eref{\scriptscriptstyle\eqref{urFormula},\,\eqref{linearity}} \B(x_1,x_1^2 + 1),
} 
which proves Theorem~\textup{\ref{TheoremForDiagonal}} in this case.
\end{Rem}

\subsection{Angle}\label{s33}
Now we wish to investigate the simplest linearity domain, the angle. This is a domain~$\Ang(w)$ in~$\Omega$ that is bounded by a right tangent from the left, by a left tangent from the right, and has only one point~$W=(w,w^2)$ on the fixed boundary, see {the left drawing on} Fig.~\ref{AngleEnvelope} below. The point~$W$ is the common endpoint for the two tangents bounding~$\Ang(w)$. We assume that the function~$B$ is affine in~$\Ang(w)$:
\eq{\label{InAngle}
B(y) = \beta_2y_2 + \beta_1y_1 + \beta_0,\qquad y\in \Ang(w),
}
and is defined by the formulas
\eq{\label{MrtAndMlt}
\begin{aligned}
B(y) &= \Mrt(\ur)(y_1 - \ur) + f(\ur),&\qquad &\ur = \ur(y),&\qquad &y \in \Rt(u_1,w);\\
B(y) &= \Mlt(\ul)(y_1 - \ul) + f(\ur),& \qquad &\ul = \ul(y),&\qquad &y \in \Lt(w,u_2),
\end{aligned}
}
in the adjacent tangent domains.
We also assume that the functions~$\Mrt$ and~$\Mlt$ satisfy the assumptions of Propositions~\ref{321ISVZ} and~\ref{322ISVZ}, respectively, and ask the question when the function~$B$ is~$C^1$-smooth and locally concave. Proposition~$3.4.4$ in~\cite{ISVZ2018} gives the answer.

\begin{St}[Proposition~$3.4.4$ in~\cite{ISVZ2018}]\label{344ISVZ}
Assume~$w\in\R$ solves the balance equation
\eq{\label{BalanceEquation}
\Mrt(w) + \Mlt(w) = 2f'(w). 
}
If the functions~$\Mrt$ and~$\Mlt$ satisfy the assumptions of Propositions~\textup{\ref{321ISVZ}} and~\textup{\ref{322ISVZ},} respectively\textup, then there exist~$\beta_0$\textup, $\beta_1$\textup, and~$\beta_2$ such that~$B$ defined by~\eqref{InAngle} and~\eqref{MrtAndMlt} is a~$C^1$-smooth locally concave function {satisfying~\eqref{MongeAmpere} on each of the domains~$\Rt(u_1,w)$\textup, $\Ang(w)$\textup, and~$\Lt(w,u_2)$}.
\end{St}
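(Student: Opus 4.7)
The plan is to determine the three coefficients $\beta_0,\beta_1,\beta_2$ by imposing $C^1$-matching across the right tangent through $W=(w,w^2)$, then to verify that the balance equation \eqref{BalanceEquation} is exactly the condition under which the analogous matching on the left tangent is automatic, and finally to upgrade these pointwise matchings, together with the local concavity of each individual piece, to local concavity on the whole union $\Rt(u_1,w)\cup\Ang(w)\cup\Lt(w,u_2)$.

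For the first step I write $u=\ur(y)$ and $s=\sqrt{y_1^2+1-y_2}$, so that $u=y_1+1-s$ and the identity $y_1-u+1=s$ holds. Differentiating $B|_{\Rt}(y)=\Mrt(u)(y_1-u)+f(u)$ and using \eqref{difeq} in the form $f'(u)-\Mrt(u)=\Mrt'(u)$, the bulk terms collapse to
\[
\frac{\partial B}{\partial y_1}\bigg|_{\Rt}=\Mrt(u)+\Mrt'(u)(s-y_1),\qquad \frac{\partial B}{\partial y_2}\bigg|_{\Rt}=\tfrac12\Mrt'(u).
\]
Restricting to the right tangent through $W$, where $u=w$ and $s-y_1=1-w$, and comparing with the constant $\nabla B|_{\Ang}=(\beta_1,\beta_2)$ forces $\beta_1=\Mrt(w)+(1-w)\Mrt'(w)$ and $\beta_2=\tfrac12\Mrt'(w)$. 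The symmetric computation on the left tangent, using \eqref{difeq2} in the form $f'(v)-\Mlt(v)=-\Mlt'(v)$ together with $y_1-\ul-1=-s$, yields $\beta_1=\Mlt(w)-(1+w)\Mlt'(w)$ and $\beta_2=\tfrac12\Mlt'(w)$.

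The two expressions for $\beta_2$ agree iff $\Mrt'(w)=\Mlt'(w)$, and given this, the two expressions for $\beta_1$ agree iff $\Mrt(w)-\Mlt(w)=-2\Mrt'(w)$; invoking \eqref{difeq} once more these two conditions collapse to the single scalar identity \eqref{BalanceEquation}, and conversely \eqref{BalanceEquation} together with the differential equations makes both consistency conditions hold. The constant $\beta_0$ is then fixed by requiring $B|_{\Ang}(W)=f(w)$; since along each tangent through $W$ the adjacent pieces are affine, share the same gradient, and coincide at $W$, they agree on the whole tangent, so $B$ is $C^1$-smooth across both interfaces. Local concavity then follows from a standard $C^1$-gluing argument: the angle is affine hence trivially concave, the tangent subdomains are locally concave by Propositions~\ref{321ISVZ} and~\ref{322ISVZ}, and on any line segment lying in the union the directional derivative of $B$ along the segment is continuous across the interfaces (by $C^1$-smoothness) and nonincreasing on each maximal sub-segment contained in a single subdomain (by concavity there), hence nonincreasing throughout. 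The Monge--Amp\`ere equation \eqref{MongeAmpere} is trivial in $\Ang(w)$ because the Hessian of an affine function vanishes, and it holds in $\Rt(u_1,w)$ and $\Lt(w,u_2)$ directly by Propositions~\ref{321ISVZ} and~\ref{322ISVZ}.

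The main obstacle is the algebraic identification of the two-sided $C^1$-compatibility across the two tangents with the single scalar identity \eqref{BalanceEquation}. This hinges on the cancellations afforded by \eqref{difeq} and \eqref{difeq2} together with the geometric identities $y_1-\ur+1=s$ on the right tangent and $y_1-\ul-1=-s$ on the left one, which reflect that both tangents through $W$ share the common value $s=1$ at $W$. Once these cancellations are in hand, the remaining ingredients --- the existence of $\beta_0$, the concavity gluing, and the Monge--Amp\`ere verification --- are essentially routine.
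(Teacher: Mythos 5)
Your proof is correct and takes the natural approach: differentiate the tangent-domain formula using \eqref{difeq} and \eqref{difeq2}, observe the gradient is constant along each tangent, match it to $(\beta_1,\beta_2)$ on the two bounding tangents, and reduce the resulting compatibility conditions to the balance equation \eqref{BalanceEquation}; your values of $\beta_1,\beta_2,\beta_0$ agree with the formulas \eqref{Beta2}--\eqref{Beta0} quoted in the paper, and the $C^1$-gluing argument for local concavity is the standard one. This is essentially the argument behind Proposition~3.4.4 in \cite{ISVZ2018}, which the paper cites without reproducing.
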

The parameters in~\eqref{InAngle} are then defined by
\alg{
\label{Beta2}\beta_2 &= \frac{\Mlt(w) - \Mrt(w)}{4};\\
\label{Beta1}\beta_1 &= f'(w) - 2\beta_2w;\\
\label{Beta0}\beta_0 &= f(w) - wf'(w) + \beta_2w^2,
}
see formulas~$(3.4.2)$ and~$(3.4.5)$ in~\cite{ISVZ2018}. Now we wish to find an analog of this foliation for diagonally concave functions. As we have said in the introduction, the linearity domains in~$\Omega$ correspond to bilinearity domains in~$\Sigma$.
\begin{figure}[h!]
\includegraphics[height=4cm]{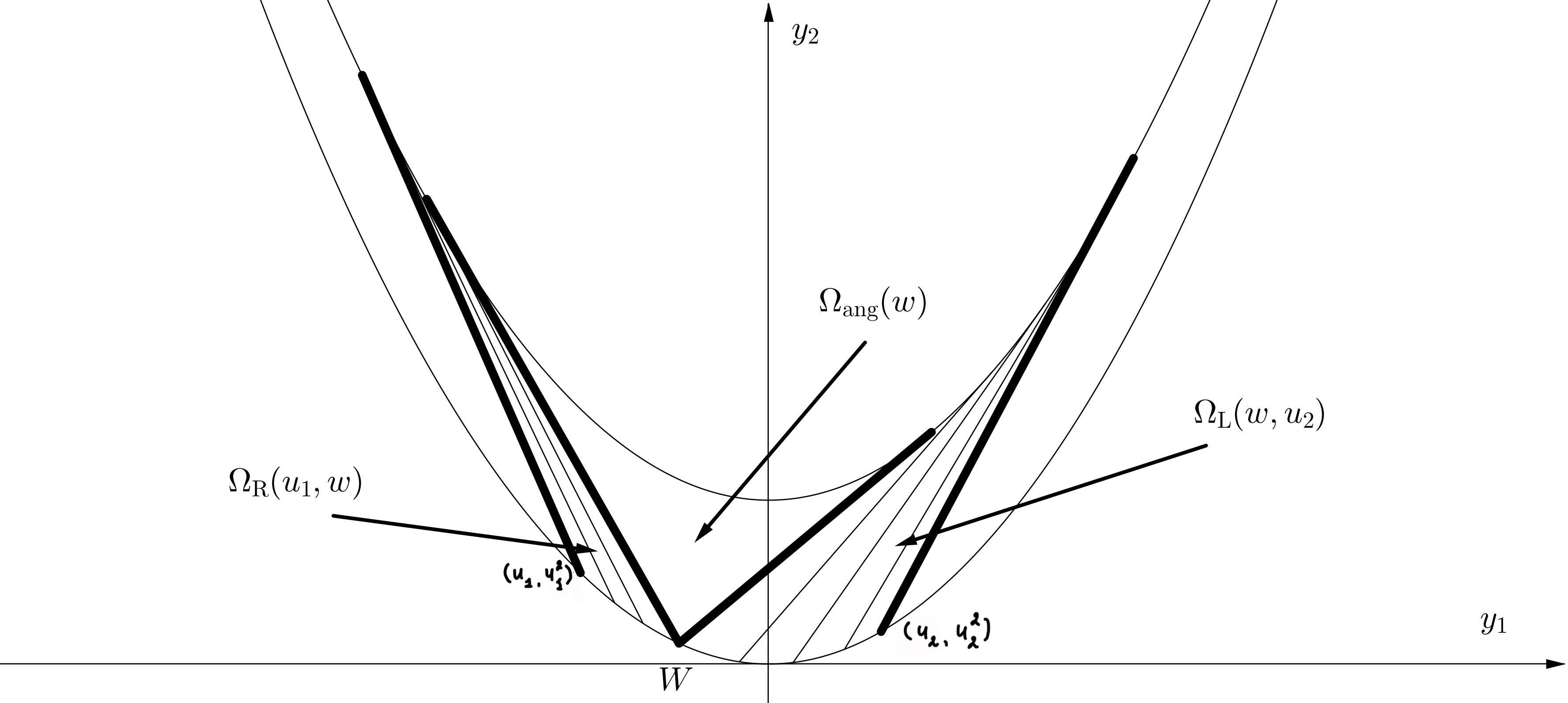}
\includegraphics[height=4.4cm]{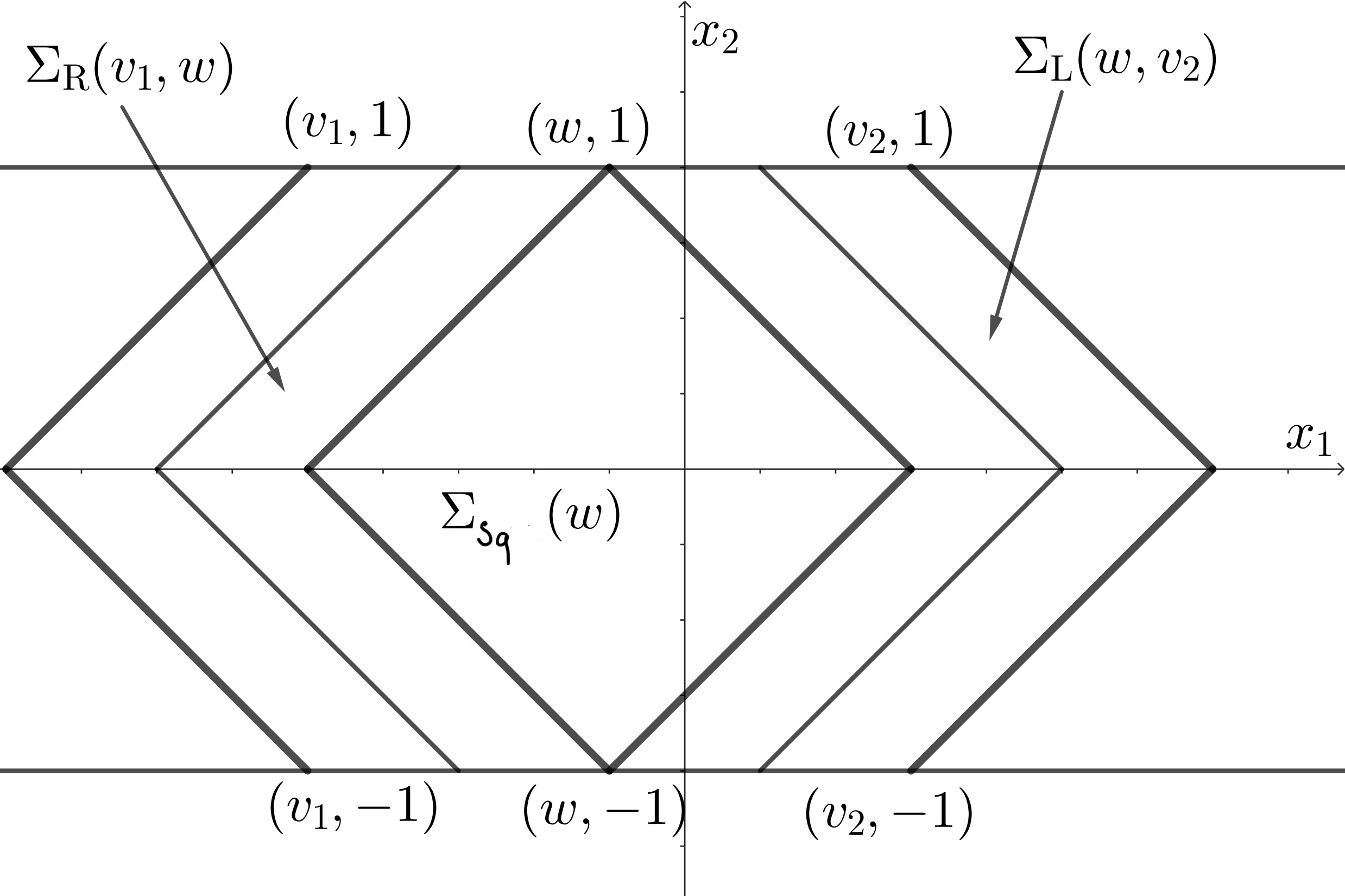}
\caption{An angle and a square}
\label{AngleEnvelope}
\end{figure}
We choose~$w\in \R$ and define the \emph{square} domain
\eq{
\Sang(w) = \Set{x\in \Sigma}{|x_1 - w| {+ |x_2|} \leq 1},
}
see Fig.~\ref{AngleEnvelope}, and assume that
\eq{\label{BilinearInEnvelope}
V(x) = \alpha_{11}(x_1^2 - x_2^2) + \alpha_1 x_1 + \alpha_0, \qquad x\in \Sang(w).
}
There is no linear term with~$x_2$ due to symmetry.

\begin{St}\label{TransferToSquare}
Assume the point~$w$ solves the balance equation~\eqref{BalanceEquation}.
If the functions~$\Mrt$ and~$\Mlt$ satisfy the assumptions of Propositions~\textup{\ref{HorizontalFurRight}} 
and~\textup{\ref{HorizontalFurLift},} respectively\textup, then there exist~$\alpha_{11}$\textup, $\alpha_1$\textup, 
and~$\alpha_0$ such that~$V$ defined by~\eqref{BFRT} on~$\SRt(v_1,w)$ with~$m = \Mrt$\textup, by~\eqref{BilinearInEnvelope} 
on~$\Sang(w)$\textup, and by~\eqref{BFLT} on~$\SLt(w,v_2)$ with~$m = \Mlt$ is a~$C^1$\!-smooth diagonally concave 
function\textup, solving~\eqref{StrangeEquation} on each of the domains~$\SRt(v_1,w)$\textup, $\Sang(w)$\textup, 
and~$\SLt(w,v_2)$.
\end{St}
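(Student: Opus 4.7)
The plan is to mirror the proof of Proposition~$3.4.4$ in~\cite{ISVZ2018}, transporting every ingredient from the parabolic strip~$\Omega$ to the diagonal strip~$\Sigma$. On~$\SRt(v_1,w)$ and~$\SLt(w,v_2)$ the function~$V$ is already diagonally concave and solves~\eqref{StrangeEquation} by Propositions~\ref{HorizontalFurRight} and~\ref{HorizontalFurLift}. On~$\Sang(w)$ the ansatz~\eqref{BilinearInEnvelope} is bilinear; a direct Hessian computation gives $\partial_{11}^2 V = 2\alpha_{11}$, $\partial_{22}^2 V = -2\alpha_{11}$, $\partial_{12}^2 V = 0$, so both sides of~\eqref{StrangeEquation} vanish identically, and $V$ is affine along both diagonal directions (hence trivially diagonally concave). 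Thus the whole issue is to choose $\alpha_0,\alpha_1,\alpha_{11}$ to make the three pieces agree in the $C^1$ sense.

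I would first carry out the matching along the line $\vr = w$, the left side of $\Sang(w)$, which has direction $(1,1)$ for $x_2>0$. A direct computation on the $\SRt$-side, using~\eqref{difeq} at $u=w$, yields $\partial_1 V = \Mrt'(w)x_2 + \Mrt(w)$ and $\partial_2 V = -\Mrt'(w)x_2$, while on the square side one has $\partial_1 V = 2\alpha_{11}(w-1) + \alpha_1 + 2\alpha_{11}x_2$ and $\partial_2 V = -2\alpha_{11}x_2$. Equating these as functions of $x_2$ along the line forces $\alpha_{11} = \Mrt'(w)/2$ and $\alpha_1 = \Mrt(w) - 2\alpha_{11}(w-1)$; matching the values of $V$ itself then fixes $\alpha_0$. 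An analogous computation on $\vl = w$ produces $\alpha_{11} = \Mlt'(w)/2$ and $\alpha_1 = \Mlt(w) - 2\alpha_{11}(w+1)$, and the symmetry $x_2\mapsto -x_2$ covers the portions with $x_2<0$.

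The hard part will be the consistency of these two prescriptions for $(\alpha_{11},\alpha_1,\alpha_0)$; this is precisely where~\eqref{BalanceEquation} enters. Using~\eqref{difeq} and~\eqref{difeq2} at $u=w$, we have $\Mrt'(w) = f'(w) - \Mrt(w)$ and $\Mlt'(w) = \Mlt(w) - f'(w)$, so~\eqref{BalanceEquation} yields $\Mrt'(w) = \Mlt'(w) = (\Mlt(w) - \Mrt(w))/2$. The two formulas for $\alpha_{11}$ thus collapse into $\alpha_{11} = (\Mlt(w) - \Mrt(w))/4$, which is exactly $\beta_2$ from~\eqref{Beta2}. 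A short algebraic check, again via the balance equation, shows that the two formulas for $\alpha_1$ reduce to $\alpha_1 = f'(w) - 2\alpha_{11}w = \beta_1$, and $V$-matching at one corner of $\Sang(w)$ pins down $\alpha_0$.

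Global diagonal concavity then follows from $C^1$-smoothness together with the piecewise diagonal concavity already established: restricting the resulting $V$ to any line of direction $(1,1)$ or $(1,-1)$ gives a $C^1$ function of one real variable that is concave on each piece of the induced partition (indeed affine on the piece inside $\Sang(w)$), hence has a non-increasing derivative throughout, and is therefore concave.
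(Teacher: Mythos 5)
Your proof is correct and follows essentially the same route as the paper's: where the paper simply reads the coefficients off the angle formulas (setting $\alpha_{11}=\beta_2$, $\alpha_1=\beta_1$, $\alpha_0=\beta_0+\beta_2$) and then verifies $C^1$-matching at the four corners of~$\Sang(w)$ using Lemma~\ref{affineLemma} to reduce to a single directional derivative at two points, you re-derive the same coefficients by equating gradients along the full boundary lines $\vr=w$ and $\vl=w$ and then confirm consistency via~\eqref{BalanceEquation}. You also spell out the final step — that $C^1$-smoothness plus piecewise diagonal concavity gives global diagonal concavity by restricting to diagonal lines — which the paper states but leaves to the reader.
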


\begin{proof}
We set
\eq{\label{Alphas}
\begin{aligned}
\alpha_{11} &= \beta_2;\\
\alpha_1 &= \beta_1 = f'(w) - 2\beta_2 w;\\
\alpha_0 &= \beta_0 + \beta_2 = \beta_2(1+w^2)+ f(w) - wf'(w), 
\end{aligned}
}
where~$\beta_i$ are defined in~\eqref{Beta2},~\eqref{Beta1}, and~\eqref{Beta0}. {Note that while~$\alpha_{11}$ and~$\alpha_1$ coincide with~$\beta_2$ and~$\beta_1$, respectively, the coefficient~$\alpha_0$ does not equal~$\beta_0$.}
The function~$V$ is a~$C^1$-smooth diagonally concave solution to~\eqref{StrangeEquation} on each of the domains~$\SRt(u_1,w)$,~$\Sang(w)$, and~$\SLt(w,u_2)$. To prove the theorem, it suffices to check the~$C^1$-smoothness of~$V$. 

We start with the verification of its continuity at the common boundaries of these domains. By linearity and symmetry, we may investigate the values at the points~$(w,-1)$ and $(w+1,0)$ only.

The value of~\eqref{BilinearInEnvelope} at~$(w,-1)$ is
\mlt{
\alpha_{11}(w^2-1) + \alpha_1w + \alpha_0\\ \Eeqref{Alphas}  \beta_2(w^2-1) + (f'(w) - 2\beta_2w)w + \beta_2(w^2+1) +f(w) - wf'(w) = f(w).
}
This coincides with the values of~\eqref{BFLT} and~\eqref{BFRT} at the same point.

The value of~\eqref{BilinearInEnvelope} at~$(w+1,0)$ is
\mlt{
\alpha_{11}(1+w)^2 + \alpha_1(w+1) + \alpha_0\\ \Eeqref{Alphas}  \beta_2(1+w)^2 + (f'(w) - 2\beta_2w)(w+1) + \beta_2(w^2+1) +f(w) - wf'(w) 
= 2\beta_2 + f'(w) + f(w)\\ \Eeqref{Beta2} \frac{\Mlt(w) - \Mrt(w)}{2} + f'(w) +f(w) \Eeqref{BalanceEquation} f(w) + \Mlt(w).
}
This coincides with the value of~\eqref{BFLT} at the same point.

Lemma~\ref{affineLemma} allows to check the coincidence of gradients of pieces of~$V$ at the points~$(w,-1)$ and~$(w-1,0)$ only (we have also used symmetry). Moreover, it suffices to verify the coincidence of the derivatives with respect to~$x_1$ only, since the derivatives along directions of linearity already coincide. 

The derivative w.r.t.~$x_1$ of~\eqref{BilinearInEnvelope} at~$(w,-1)$ equals~$2\alpha_{11}w + \alpha_1$, which, by~\eqref{Alphas}, equals~$f'(w)$; the desired coincidence of derivatives at~$(w,-1)$ is verified. 

The derivative w.r.t.~$x_1$ of~\eqref{BilinearInEnvelope} at~$(w-1,0)$ equals
\eq{
2\alpha_{11}(w-1) + \alpha_1 \Eeqref{Alphas} -2\beta_2 + f'(w) \stackrel{\scriptscriptstyle{\eqref{BalanceEquation},\eqref{Beta2}}}{=} \Mrt(w).
}
This coincides with the value of~\eqref{Vx1} at the same point.
\end{proof}

\begin{Rem}
In the proposition above one or both of~$v_1$ and~$v_2$ may be infinite. In this case\textup, we rely upon the corresponding proposition in Subsection~\textup{\ref{s32}}.
\end{Rem}

The theorem below is a direct consequence of Theorem~\ref{MishasTh}.

\begin{Th}\label{AngleTheorem}
Assume there exists~$w\in \R$ with the following properties\textup:
\emph{\begin{enumerate}[1)]
\item \emph{$w$ solves the balance equation~\eqref{BalanceEquation}
with~$\Mrt$ given by~\eqref{minfty} and~$\Mlt$ is given by~\eqref{minfty2}};
\item \emph{for any~$v < w$\textup, the inequality~$\Mrt''(v) \leq 0$ holds true};
\item \emph{for any~$v > w$\textup, the inequality~$\Mlt''(v) \geq 0$ holds true}.
\end{enumerate}
}
Then\textup, the function~$V$ defined by~\eqref{BFRT} on~$\SRt(v_1,w)$\textup, by~\eqref{BilinearInEnvelope} 
on~$\Sang(w)$\textup, and by~\eqref{BFLT} on~$\SLt(w,v_2)$\textup, coincides with~$\VB$.
\end{Th}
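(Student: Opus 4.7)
The strategy is to build the global function $V$ on $\Sigma$ out of the three pieces described in the statement and then apply Theorem~\ref{MishasTh}. The regularity and local diagonal concavity have already been prepared: since $\Mrt$ and $\Mlt$ come from the formulas \eqref{minfty} and \eqref{minfty2}, Propositions~\ref{RightFurInfty} and~\ref{LeftFurInfty} apply on the infinite herringbone domains $\SRt(-\infty,w)$ and $\SLt(w,\infty)$; the balance equation \eqref{BalanceEquation} is exactly what is needed to glue them to the bilinear patch on $\Sang(w)$ via Proposition~\ref{TransferToSquare}. Thus $V$ will be a $C^1$-smooth diagonally concave function on $\Sigma$, solving \eqref{StrangeEquation} on each of the three open subdomains, and with correct boundary values on $x_2=\pm 1$ (this last statement was essentially checked in the proof of Proposition~\ref{TransferToSquare} when verifying the matching of $V$ at $(w,\pm 1)$ and $(w\pm 1,0)$).

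\textbf{Verification of the hypotheses of Theorem~\ref{MishasTh}.} The function $V$ is continuous (even $C^1$), so its discontinuity set is empty. The exponential growth bound \eqref{BoundForMishasTheorem} follows directly from the discussion preceding Theorem~\ref{minftyTh}, since $V$ is built from $\Mrt,\Mlt$ given by \eqref{minfty}, \eqref{minfty2} on the tangent pieces and is bounded (polynomially) on the compact sail $\Sang(w)$. It remains to check that at every interior point one of the four options of Theorem~\ref{MishasTh} occurs. Split $\Sigma$ according to the three subdomains and the sign of $x_2$:
\begin{enumerate}[(i)]
\item Inside $\Sang(w)$, the function $V$ is bilinear in a neighborhood, so case 1) holds.
\item For $x$ in the upper open half of $\SRt(-\infty,w)$ (respectively $\SLt(w,\infty)$) the level sets of $\vr$ (resp.\ $\vl$) through $x$ are the segments of direction $(1,1)$ (resp.\ $(1,-1)$) joining the midline $x_2=0$ to the boundary $x_2=1$; on each such segment $V$ is affine by \eqref{BFRT} (resp.\ \eqref{BFLT}), so case 2) (resp.\ case 3)) applies. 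The symmetric statements with $(1,-1)$ and $(1,1)$ hold in the lower half-strip.
\item On the midline $x_2=0$ (away from $\Sang(w)$), both the $(1,1)$-segment going up to $x_2=1$ and the $(1,-1)$-segment going down to $x_2=-1$ are extremal segments with $V$ affine on them, and $V$ is differentiable in both directions thanks to the $C^1$-smoothness established in Propositions~\ref{HorizontalFurRight}, \ref{HorizontalFurLift}; hence case 4) applies.
\item The two boundary segments separating $\Sang(w)$ from the herringbones are themselves parallel to $(1,\pm 1)$, and across them $V$ is $C^1$ with the same affine restriction, so the relevant segment extends across the seam to the boundary of $\Sigma$ and the point is still covered by case 2) or 3); the corner $(w-1,0)$ and its symmetric counterpart are covered by case 4).
\end{enumerate}

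\textbf{Conclusion.} By Theorem~\ref{MishasTh}, $V$ is the minimal diagonally concave function with its boundary values on $x_2=\pm 1$. Since those boundary values equal $f(x_1)$, we have $V\in\Diagonal[f]$ and $V$ is pointwise minimal in this class, hence $V=\VG=\VB$ by \eqref{MinimalDiagonallyConcaveFunction} and Theorem~\ref{BurkholderV}.

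\textbf{Expected main obstacle.} Essentially everything is already in place; the only point that requires genuine care is verifying extremality at points lying on the midline $x_2=0$ and on the two diagonal seams between $\Sang(w)$ and the herringbones, because there one must combine the three local formulas for $V$ and invoke the $C^1$-matching of Proposition~\ref{TransferToSquare} to be sure the relevant $(1,1)$- or $(1,-1)$-segment on which $V$ is affine does in fact reach the boundary $x_2=\pm 1$ of $\Sigma$ and that $V$ has a directional derivative at the endpoint.
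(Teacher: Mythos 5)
Your proof follows the paper's approach exactly: verify the hypotheses of Theorem~\ref{MishasTh} using Propositions~\ref{RightFurInfty}, \ref{LeftFurInfty}, and~\ref{TransferToSquare} for the gluing, checking which of the four cases of Theorem~\ref{MishasTh} applies region by region. The paper's own proof is just a two-line sketch of this same argument (referring back to the proof of Theorem~\ref{minftyTh} for the herringbone points), so your version is correct and simply supplies more detail at the seams and midline.
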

 
\begin{proof}
It suffices to verify the assumptions of Theorem~\ref{MishasTh}. For the points in the horizontal herringbones, this was already done in the proof of Theorem~\ref{minftyTh}. The points in~$\Sang(w)$ fulfill the first requirement of Theorem~\ref{MishasTh}.
\end{proof}

\begin{Rem}\label{RemarkAngle}
As we see, the statement of Theorem~\textup{\ref{TheoremForDiagonal}} holds true in the case where Theorem~\textup{\ref{AngleTheorem}} is applicable. Indeed\textup, in~$\Sang$\textup, we have
\eq{
\alpha_{11}x_1^2 + \alpha_1 x_1 + \alpha_0 = \beta_2(x_1^2 + 1) + \beta_1 x_1 + \beta_0
}
{by~\eqref{Alphas}}.
\end{Rem}

\subsection{Examples}\label{s34}
\paragraph{Examples for Theorems~\ref{minftyTh} and~\ref{minftyTh2}: one-sided exponential bounds.} Let~$\lambda \in (0,1)$, consider the function~$f(t) = e^{\lambda t}$. We see this function satisfies Conditions~\ref{reg} and~\ref{sum} and also falls under the scope of Theorem~\ref{minftyTh2}. The corresponding function~$\B$ for the~$\BMO$ case was firstly computed in~\cite{SlavinVasyunin2011} (see Example~$1$ in~\cite{IOSVZ2016} for a brief exposition):
\eq{
\B(y)=\Big(\frac{\lambda}{1-\lambda}\big(y_1-\ul(y)\big)+1\Big)\;e^{\lambda\ul(y)},\qquad\ul(y)=y_1-1+\sqrt{y_1^2-y_2+1}.
}
The exact formula for its martingale counterpart~$\VB$ is, seemingly, omitting in the literature. According to Theorem~\ref{minftyTh2},
\eq{\label{VBForExponent}
\VB(x)=\Big(\frac{\lambda}{1-\lambda}\big(x_1-\vl(x)\big)+1\Big)\;e^{\lambda\vl(x)},\qquad\vl(x) =x_1-1+|x_2|.
}
Using~\eqref{VDef}, we obtain the sharp inequality
\eq{
\E e^{\lambda(\psi_\infty - \psi_0)} \leq \Big(\frac{\lambda (1- |\varphi_0|)}{1-\lambda}+1\Big)\;e^{\lambda(|\varphi_0|-1)},\qquad |\varphi_\infty| = 1\ \text{a.\,s.},\ \psi\ \text{is a martingale transform of}\ \varphi.
}

\begin{Rem}\label{U=V}
One may see that in this case~$\UB = \VB$\textup, because the function~$\VB$ is concave in all the directions~$(\beta,1)$ with~$|\beta| \leq 1$. This may be proved using the `left' analog of~\eqref{SecondDerivatieveBeta}\textup: the second derivative of a function $V$ described in Proposition~\ref{HorizontalFurLift} with respect to the direction~$(\beta,1)$ is equal to
\eq{
-(1+\beta)^2 |x_2|m'' + (\beta^2-1)m'.
}
This quantity is non-positive for all $\beta\in[-1,1]$ and all $x_2\in [-1,1]$ if and only if $m'' \geq 0$ and $m' \geq 0$. For~$f(t) = e^{\lambda t}$ we have $m(t) = \frac{\lambda}{1-\lambda}e^{\lambda t}$ that fulfills these conditions provided $\lambda<1$.
\end{Rem}

Let us also consider the function $f(t) = -e^{\lambda t}$ with $\lambda \in (0,1)$. This function satisfies Conditions~\ref{reg} and~\ref{sum} and falls under the scope of Theorem~\ref{minftyTh} with $m(v) = -\frac{\lambda}{1+\lambda}e^{\lambda v}$. We have 
$$
\VB(x) = -\Big(\frac{\lambda}{1+\lambda}\big(x_1 - \vr(x)\big) + 1\Big)\,e^{\lambda \vr(x)},\qquad \vr(x) = x_1 + 1 - |x_2|.
$$
We calculate 
$$
\frac{\partial \VB}{\partial x_2} = \frac{\lambda^2}{1+\lambda}\,x_2 \,e^{\lambda \vr(x)},
$$ 
and see that this this derivative has the same sign as $x_2$. Therefore, for each $x_1$ fixed the restriction $\VB(x_1,\cdot)$ is not concave on $[-1,1]$. Therefore, $\UB > \VB$ in this case.
If we apply Theorem~\ref{TheoremForSubordination} and notice that the supremum in the right-hand side in~\eqref{FirstMajorization} is attained at $\delta=0$, we obtain an upper estimate for~$\UB$:
$$
\UB(x) \leq f(x_1) = -e^{\lambda x_1}.
$$

\paragraph{An example for Theorem~\ref{AngleTheorem}:~$p$-moments with~$p \geq 2$.} Consider the case~$f(t) = |t|^p$ for~$p \geq 2$. The~$\BMO$ case was considered in~\cite{SlavinVasyunin2012}, and the martingale case goes back to~\cite{Burkholder1988} (see also~\cite{Burkholder1991} and Section~$3.7$ in~\cite{Osekowski2012}). One may see that this function fulfills the hypothesis of Theorem~\ref{AngleTheorem} (the point~$0$ solves the balance equation by symmetry). The formula for the Bellman function is bulky and contains a truncated gamma function. We omit its presentation and pass to its consequence: the inequality
\eq{
\E|\psi_\infty|^p \leq \frac12 \Gamma(p+1), \quad \varphi_0 = \psi_0 = 0,
}
is sharp for the case~$|\varphi_\infty| = 1$ and~$\psi$ being a martingale transform of~$\varphi$ with a unimodular transforming sequence.

\section{Chordal domains}\label{S4}
\subsection{Construction of Bellman candidate}\label{s41}
We describe the foliation for~$\B$ called a chordal domain, see Section~$3.3$ in~\cite{ISVZ2018} for details. 
Let $a_0,a_1,b_0,b_1$ be real numbers such that $a_0 < a_1 \leq b_1 < b_0 \leq a_0 + 2$. Then, the points~$(a_0,a_0^2),$ $(a_1,a_1^2)$, $(b_1,b_1^2),$ and~$(b_0,b_0^2)$ lie on~$\FixedBoundary \Omega$ (see the left picture on Fig.~\ref{CHdVF}).
The chords~$\big[(a_0,a_0^2),(b_0,b_0^2)\big]$ and~$\big[(a_1,a_1^2),(b_1,b_1^2)\big]$ lie inside~$\Omega$ and the former separates the latter from the free boundary. We assume that the domain between them is foliated by some chords~$\big[(a(\ell),a^2(\ell)),(b(\ell),b^2(\ell))\big]$; it is convenient to parametrize them with the lengths of their projections onto the~$y_1$-axis, i.\,e.,~$b(\ell) - a(\ell) = \ell$. In other words, there are two functions~$a,b \colon [\ell_1,\ell_0]\to \R$, the first one decreasing and the second one increasing; here~$\ell_1 = b_1-a_1$ and~$\ell_0 = b_0-a_0$. Let~$\Ch([a_0,b_0],[a_1,b_1])$ be  the subdomain of~$\Omega$ bounded by~$\big[(a_0,a_0^2),(b_0,b_0^2)\big]$ and~$\big[(a_1,a_1^2),(b_1,b_1^2)\big]$ and equipped with the functions~$a$ and~$b$. We call this domain a \emph{chordal domain}. Define the function~$B$ {to be affine} on the chords:
\eq{\label{vallun}
B(y_1,y_2) = \alpha f(a(\ell)) + \beta f(b(\ell)),\qquad
(y_1,y_2) = \alpha (a(\ell),a^2(\ell)) + \beta (b(\ell),b^2(\ell)),\qquad \alpha+\beta =1,\ \alpha, \beta \geq 0.
}
From now on we will suppress the argument in our notation for the functions~$a$ and~$b$. We always assume these functions are differentiable and their derivatives do not vanish on~$(\ell_1,\ell_0)$.

\begin{St}[Proposition~$3.3.3$ in~\cite{ISVZ2018}]\label{LightChordalDomainCandidate}
Consider the domain~$\Ch([a_0,b_0],[a_1,b_1])$ with the functions~$a$ and~$b$ associated with it. Assume the following\textup:
\begin{itemize}
\item the functions~$a$ and~$b$ are differentiable on the interior of their domain\textup,~$a' < 0$\textup, and~$b' > 0$\textup;
\item for each~$\ell \in (\ell_1,\ell_0)$ the pair~$(a(\ell),b(\ell))$ satisfies the {\bf cup equation}
\eq{\label{urlun}
\frac{f(b) - f(a)}{b-a} = \frac{f'(a) + f'(b)}{2},\qquad \ell \in [\ell_1,\ell_0];
}
\item the differentials 
\eq{\label{DifDef}
\Dr = f''(b) - \frac{f'(b) - f'(a)}{b-a},\qquad \Dl = f''(a) - \frac{f'(b) - f'(a)}{b-a}
}
satisfy the inequalities $\Dr < 0$ and~$\Dl < 0$ on~$(\ell_1,\ell_0)$.
\end{itemize}
Then the function~$B$ defined by formula~\eqref{vallun} is a concave solution to the homogeneous Monge--Amp\`ere equation~\eqref{MongeAmpere} on~$\Ch([a_0,b_0],[a_1,b_1])$.
\end{St}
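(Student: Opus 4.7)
My proof plan is to compute $\nabla B$ and the Hessian of $B$ directly from the definition~\eqref{vallun}. The cup equation~\eqref{urlun} will emerge as the compatibility condition that makes $B$ a $C^1$ function, and the strict negativity of $\Dr, \Dl$ will then yield concavity via a direct sign analysis of the (unique) non-zero eigenvalue of the Hessian.

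To compute $\nabla B$, parametrize the interior of $\Ch([a_0,b_0],[a_1,b_1])$ by $(\ell, t) \in (\ell_1, \ell_0) \times (0, 1)$ via
\[
y_1 = a(\ell) + t\ell, \qquad y_2 = a^2(\ell) + t\bigl(b^2(\ell) - a^2(\ell)\bigr),
\]
and express $dB, dy_1, dy_2$ as linear forms in $d\ell$ and $dt$. Demanding $dB = B_1\, dy_1 + B_2\, dy_2$, the coefficient of $dt$ yields the chord identity $B_1 + (a+b) B_2 = (f(b)-f(a))/(b-a)$, while the coefficient of $d\ell$ must hold identically in $t \in [0,1]$ and, using the assumption $a'(\ell), b'(\ell) \neq 0$, gives the two endpoint identities $B_1 + 2aB_2 = f'(a)$ and $B_1 + 2bB_2 = f'(b)$. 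The latter pair uniquely determines $(B_1, B_2)$ as explicit functions of $\ell$; its compatibility with the chord identity is precisely the cup equation~\eqref{urlun}. Thus, under~\eqref{urlun}, $B$ is $C^1$-smooth and $\nabla B = (B_1(\ell), B_2(\ell))$ depends only on $\ell$.

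Since $\nabla B$ depends only on $\ell$, the Hessian has the rank-one form $\partial_{y_i}\partial_{y_j} B = B_i'(\ell)\, \ell_{y_j}$; hence $\det \mathrm{Hess}(B) = 0$, confirming the homogeneous Monge--Amp\`ere equation~\eqref{MongeAmpere}. The non-zero eigenvalue has the sign of $B_2'(\ell)\cdot \ell_{y_2}$. Differentiating the cup equation with respect to $\ell$ and using the length-parametrization identity $b'(\ell) - a'(\ell) = 1$ yields the key differential identity
\[
\Dl(\ell)\, a'(\ell) + \Dr(\ell)\, b'(\ell) = 0.
\]
This has two consequences. First, $B_1'(\ell) + (a+b) B_2'(\ell) = 0$, which together with $\nabla \ell \parallel (-(a+b), 1)$ (since the chord direction is $(1, a+b)$) confirms the symmetry of $\mathrm{Hess}(B)$. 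Second, a short calculation gives the clean formula $B_2'(\ell) = \Dr(\ell)\, b'(\ell)/\ell$. Combining $\Dr < 0$, $b' > 0$, and $\ell > 0$ we get $B_2'(\ell) < 0$. Finally, since the outer chord separates the inner chord from the free boundary and has the larger length $\ell_0 > \ell_1$, the parameter $\ell$ increases with $y_2$ transversally to the foliation, so $\ell_{y_2} > 0$. Hence the non-zero eigenvalue of $\mathrm{Hess}(B)$ is negative, and $\mathrm{Hess}(B) \leq 0$, which is the required local concavity.

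The main technical obstacle is the careful derivation of the identity $\Dl a' + \Dr b' = 0$ from the cup equation; this single identity simultaneously secures the symmetry of $\mathrm{Hess}(B)$ and pins down the correct sign of its non-zero eigenvalue via the hypothesis $\Dr, \Dl < 0$. Everything else reduces to routine chain-rule manipulation together with the geometric sign check $\ell_{y_2} > 0$.
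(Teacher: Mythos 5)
The paper does not prove this proposition; it is cited verbatim from Proposition~$3.3.3$ of \cite{ISVZ2018}, so there is no in-paper argument to compare against. That said, your proof is mathematically sound and proceeds along what is essentially the standard route for such chordal constructions. The parametrization by $(\ell,t)$, the observation that demanding $\nabla B$ to depend only on $\ell$ produces the overdetermined system whose compatibility condition is the cup equation, the rank-one structure of $\mathrm{Hess}(B)$ giving the Monge--Amp\`ere equation for free, the differential identity $\Dl a'+\Dr b'=0$ (which the paper itself records as~\eqref{DifferentialsIdentity}) simultaneously yielding symmetry of the Hessian and the clean formula $B_2'=\Dr b'/\ell$, and finally the geometric sign check $\ell_{y_2}>0$ --- all of this is correct and matches the ingredients visible in the surrounding text of the paper. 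Two small stylistic points: the phrase ``the coefficient of $d\ell$ must hold identically in $t$'' would be cleaner stated as ``we seek a $t$-independent pair $(B_1,B_2)$; since the $d\ell$-equation is affine in $t$, this forces the two endpoint identities, and the cup equation is precisely the consistency condition with the chord identity.'' And for completeness you might note that the nondegeneracy of the Jacobian of $(\ell,t)\mapsto(y_1,y_2)$, which you implicitly use to treat $\ell$ as a function of $y$, follows from $a'<0$, $b'>0$ (its determinant is $\ell^2[(1-t)a'-tb']<0$). Neither of these is a gap, only a matter of exposition.
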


The name ``differentials'' for the expressions in~\eqref{DifDef} comes from differentiation of~\eqref{urlun}:{
\eq{
d\Big(\frac{f(b) - f(a)}{b-a} - \frac{f'(a) + f'(b)}{2}\Big) = \Dl da + \Dr db.
}
Therefore, if the pair~$(a,b)$ solves the cup equation~\eqref{urlun}, then}
\eq{\label{DifferentialsIdentity}
a'\Dl + b' \Dr = 0.
}
\begin{figure}[h!]
\includegraphics[height=3.3cm]{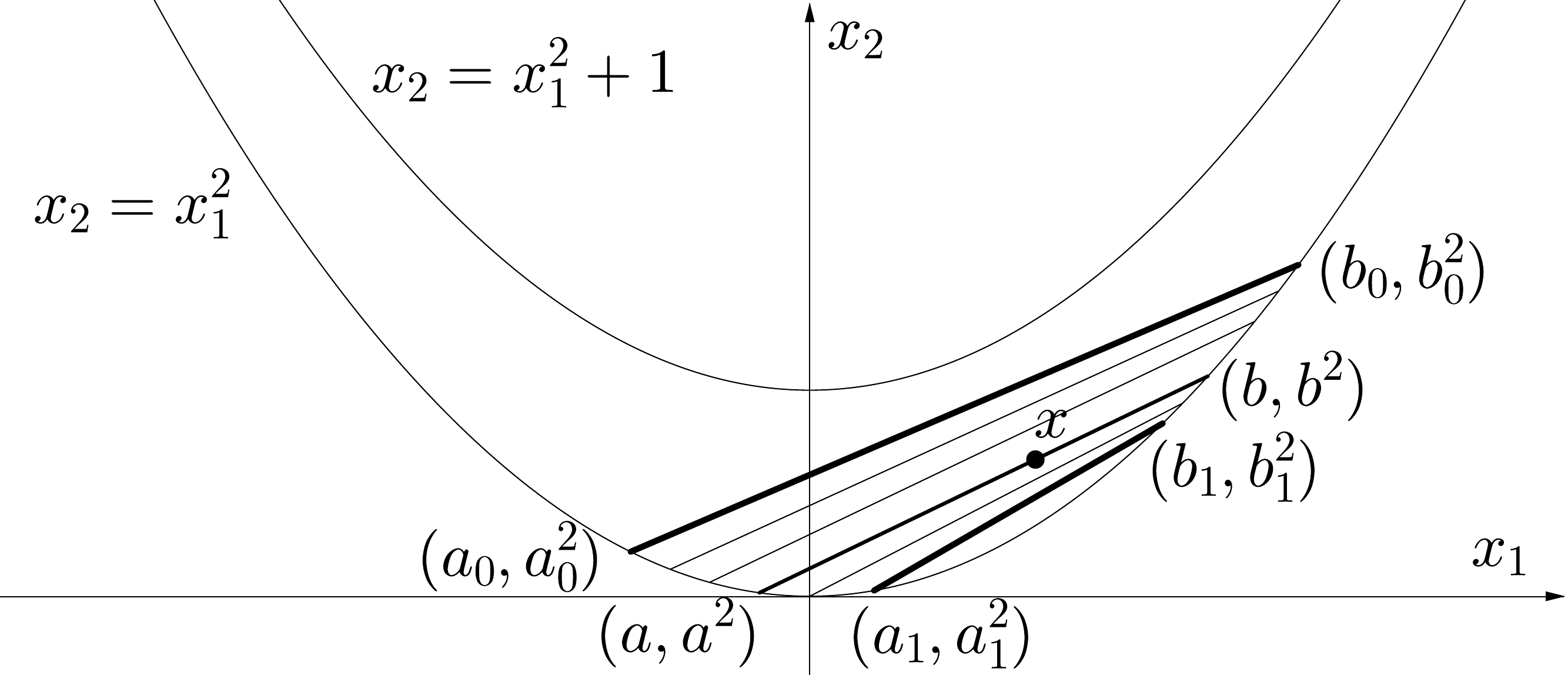}\qquad
\includegraphics[height=3.8cm]{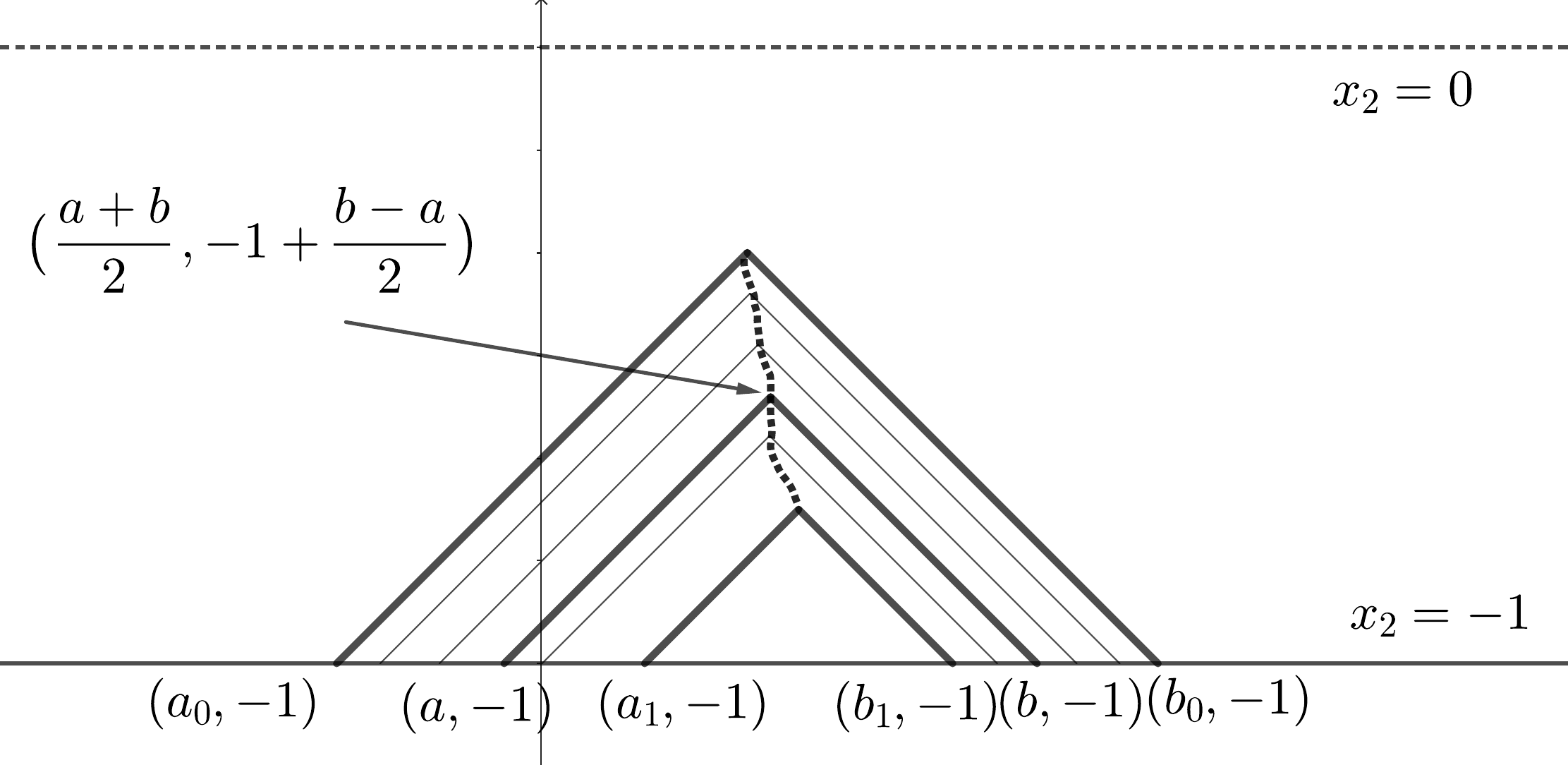}
\caption{A chordal domain and the corresponding vertical herringbone.}
\label{CHdVF}
\end{figure}

We define a function~$V$ on the subdomain
\eq{\label{SFir+}
\SFir([a_0,b_0],[a_1,b_1])\!=\!\SSet{x\in \Sigma}{\;\frac{b_1-a_1}{2}  - \Big|x_1\!-\!\frac{a_1+b_1}{2}\Big| \leq x_2\!+\!1 \leq \frac{b_0-a_0}{2}  - \Big|x_1\!-\!\frac{a_0+b_0}{2}\Big|}
} 

of~$\Sigma$ as follows: we consider the point
\eq{
P(\ell) = \Big(\frac{a+b}{2}, -1+\frac{b-a}{2}\Big),\qquad \ell \in [\ell_1,\ell_0],
}
and assume that~$V(P(\ell)) = A(\ell)$ (where~$A$ is an unknown function of a single variable) and that~$V$ is affine on the segments~$[(a,-1),P]$ and~$[P,(b,-1)]$. In other words, {we assume that}
\eq{\label{FirTreeFunction}
V(x) = 
\begin{cases}
f(b) + \frac{2(x_2+1)}{\ell}(A(\ell) - f(b)),\quad &x_1 \geq \frac{a+b}{2};\\
f(a) + \frac{2(x_2+1)}{\ell}(A(\ell) - f(a)),\quad &x_1 \leq \frac{a+b}{2},
\end{cases}
\qquad x\in \SFir([a_0,b_0],[a_1,b_1]).
}
We will call this foliation of~$\SFir$ a \emph{vertical herringbone}\footnote{{Recall the folitaions~$\SRt$ and~$\SLt$ drawn on Fig.~\ref{TangHorRight} and Fig.~\ref{AngleEnvelope} are called horizontal herringbones.}}. The dependence of~$\ell$ (and, thus,~$a$ and~$b$) on~$x$ is a little bit tricky. We have either~$a=a(\ell(x)) = x_1-x_2 - 1$ and then~$b$ is defined via~\eqref{urlun} or~$b=b(\ell(x)) = x_1+x_2+1$ and~$a$ is defined via~\eqref{urlun}. Note that, in particular, either~$x_1-x_2-1\in[a_0,a_1]$ or~$x_1+x_2+1 \in [b_1,b_0]$. The choice between these two possibilities is made according to the geometry of the picture (see Fig.~\ref{CHdVF}): the first possibility occurs when~$x_1 \leq (a+b)/2$.

\begin{St}\label{VerticalFir}
Consider the domain~$\SFir([a_0,b_0],[a_1,b_1])$ with the functions~$a,b\colon [\ell_1,\ell_0]\to \R$. Assume these functions are differentiable and their derivatives do not vanish on~$(\ell_1,\ell_0)$\textup; here~$a$ decreases and~$b$ increases.
Let~$V$ be given by~\eqref{FirTreeFunction}\textup, where~$A\colon [\ell_1,\ell_0]\to \R$ satisfies the differential equation
\eq{\label{ODEForA}
A'(\ell)\ell - A(\ell) = -\frac{f(a) \Dl + f(b) \Dr}{\Dr+\Dl}.
}
If the functions~$a$ and~$b$ satisfy the cup equation~\eqref{urlun} and the inequalities~$\Dr < 0$ and~$\Dl < 0$\textup, then~$V$ is a diagonally concave solution to~\eqref{StrangeEquation}.
\end{St}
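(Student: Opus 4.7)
The plan is to split $\SFir([a_0,b_0],[a_1,b_1])$ into its left sub-region $\{x_1\le(a+b)/2\}$ and right sub-region $\{x_1\ge(a+b)/2\}$, on each of which only one piece of the definition~\eqref{FirTreeFunction} is used; verify the equation~\eqref{StrangeEquation} and diagonal concavity on each sub-region; and then stitch the two pieces together by checking $C^1$-smoothness at the interior seam $\{x_1=(a+b)/2\}$, which will turn out to be exactly the ODE~\eqref{ODEForA}.

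On the left sub-region, the value $a=x_1-x_2-1$ (and hence $\ell(a)$, $b(\ell)$, $A(\ell)$) is constant along the direction $(1,1)$, whereas~\eqref{FirTreeFunction} expresses $V$ affinely in $x_2$ with coefficients depending only on $a$; so $V$ is affine along $(1,1)$ there, and symmetrically affine along $(1,-1)$ on the right sub-region. From the identity
\begin{equation*}
\bigl(\partial_1^2V+\partial_2^2V\bigr)^2-4\bigl(\partial^2_{12}V\bigr)^2 \;=\; \partial^2_{(1,1)}V\cdot\partial^2_{(1,-1)}V
\end{equation*}
equation~\eqref{StrangeEquation} follows on each sub-region. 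For transverse concavity on the left, I would write $V=f(a)+(x_2+1)\phi(a)$ with $\phi(a)=2(A(\ell)-f(a))/\ell$, so that a direct computation gives $\partial^2_{(1,-1)}V=4\bigl[f''(a)-\phi'(a)+(x_2+1)\phi''(a)\bigr]$. Using the ODE~\eqref{ODEForA} together with~\eqref{DifferentialsIdentity} and $b'-a'=1$ to rewrite $A'\ell-A=f(b)a'-f(a)b'$, and then invoking the cup equation~\eqref{urlun}, a short calculation should yield $\phi'(a)=(f'(b)-f'(a))/(b-a)$, so that $f''(a)-\phi'(a)=\Dl$. A second differentiation together with~\eqref{DifferentialsIdentity} is expected to give $\phi''(a)=2b'\Dr/(a'\ell)$, and hence $(\ell/2)\phi''(a)=b'\Dr/a'=-\Dl$. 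The transverse second derivative is therefore linear in $x_2+1$, taking value $4\Dl<0$ at the fixed edge $x_2=-1$ and vanishing at the seam $x_2+1=\ell/2$, hence non-positive throughout; the symmetric argument with $\Dr<0$ settles the right sub-region.

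It remains to glue. Continuity across the seam $\{P(\ell)\}_\ell$ is immediate from~\eqref{FirTreeFunction}, since both formulas evaluate to $A(\ell)$ there. Writing out the left and right gradient expressions at $P(\ell)$ and using $\ell=b-a$ to dispose of one of the two resulting equations, the match of gradients collapses to the single scalar identity $A'\ell-A=f(b)a'-f(a)b'$, i.e.\ to~\eqref{ODEForA}. Hence $V$ is $C^1$ on $\SFir$, and its restriction to any segment parallel to $(1,1)$ or $(1,-1)$ is $C^1$, affine on one side of the seam and concave on the other, so concave globally; this gives diagonal concavity of $V$ on $\SFir([a_0,b_0],[a_1,b_1])$. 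The main technical obstacle I foresee is the algebraic reduction of $\phi'$ and $\phi''$: it rests on a careful interplay between the $a$- and $\ell$-parametrizations, the cup equation~\eqref{urlun}, and the identity $a'\Dl+b'\Dr=0$. The same chain of simplifications is what reveals that the gradient match at the seam is equivalent to~\eqref{ODEForA}, so in effect the entire proof is driven by a single computational lemma used in two places.
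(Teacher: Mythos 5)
Your proof is correct and follows essentially the same route as the paper's: split $\SFir$ along the seam $\{x_1=(a+b)/2\}$, note that on each half $V$ is affine along one diagonal, reduce the transverse second derivative to a function that is linear in $x_2+1$, and check its sign at the two ends (bottom edge and seam), with the $C^1$ gluing across the seam being equivalent to \eqref{ODEForA}. I verified the key computational claims: indeed $\phi'(a)=(f'(b)-f'(a))/(b-a)$ follows from $A'\ell-A=f(b)a'-f(a)b'$ (a reformulation of \eqref{ODEForA} via \eqref{DifferentialsIdentity}) plus the cup equation \eqref{urlun}, giving $f''(a)-\phi'(a)=\Dl$; and a second differentiation together with \eqref{DifferentialsIdentity} gives $(\ell/2)\phi''(a)=-\Dl$, so the transverse second derivative interpolates linearly between $4\Dl<0$ at $x_2=-1$ and $0$ at the seam, exactly mirroring the paper's values ($4\Dr$ and $0$ on the other half).

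The only differences are cosmetic: you work on the left half with the univariate coordinate $a$ and the profile function $\phi(a)$, whereas the paper works on the right half with $b$ and the partial-derivative formulas in \eqref{eq58}; and for the seam condition you match both $\partial_1 V$ and $\partial_2 V$, whereas the paper simply matches the single transversal directional derivative along $(1,1)$. Your phrase that the gradient match ``collapses to the single scalar identity'' is slightly loose --- one of the two matching conditions is exactly the ODE and the other reduces, once the ODE is used, to the cup equation --- but since both are hypotheses of the proposition, this does not affect the validity of the argument. Your $\phi(a)$ reparametrization arguably makes the transverse-concavity computation a bit more transparent than the paper's route through \eqref{AffineDerivative}.
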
 
We note that a reverse implication is also true: the~$C^1$\!-smoothness of~$V$ given by~\eqref{FirTreeFunction} implies~\eqref{ODEForA} while its diagonal concavity yields the cup equation and that the differentials have negative signs. We will not prove this assertion.
\begin{proof}
We will be working mostly with the right `half'~$x_1 > (a+b)/2$ of the domain~$\SFir([a_0,b_0],[a_1,b_1])$. We have~$b = 1+x_1+x_2$ there, and~$a$ is defined by~\eqref{urlun}. We will also need the formulas
\eq{\label{eq58}
b_{x_1}'=b_{x_2}' = 1;\quad a_{x_1}'=a_{x_2}'= -\frac{\Dr}{\Dl};\quad \ell_{x_1}' = \ell_{x_2}'= 1+\frac{\Dr}{\Dl},\qquad x_1 \geq \frac{a+b}{2};
}
they follow from~\eqref{DifferentialsIdentity}.

We verify the~$C^1$-smoothness of the function~$V$. This is equivalent to the coincidence on the interface~$\set{P(\ell)}{\ell \in [\ell_1,\ell_0]}$ of the directional derivatives of the right and left `halves' of the function. Since the restriction of~$V$ to each of the `halves' of the domain is~$C^1$-smooth up to the boundary, and~$V$ is continuous, it suffices to verify the coincidence of the directional derivative along a single direction that is transversal to the common border between the two `halves'. We choose the direction~$e = (1,1)$ and compute the corresponding directional derivative using~\eqref{eq58}:
\mlt{\label{Bx+yOntheright}
\frac{\partial V}{\partial e} = \frac{\partial V}{\partial x_1}+ \frac{\partial V}{\partial x_2} = 2f'(b) + \frac{2}{\ell}(A(\ell) - f(b)) - \frac{2(x_2+1)}{\ell^2}\cdot 2\big(1+ \frac{\Dr}{\Dl}\big)(A(\ell) - f(b))\\ + \frac{2(x_2+1)}{\ell} \Big(A'(\ell)\cdot 2\big(1+ \frac{\Dr}{\Dl}\big) - 2f'(b)\Big),\quad x_1 \geq \frac{a+b}{2}.
} 
When~$x_1 \leq (a+b)/2$, by linearity of~$V$ along~$e$,
\eq{\label{Bx+yOntheleft}
\frac{\partial V}{\partial e}  = \frac{2}{\ell}(A(\ell) - f(a)).
}
The~$C^1$-smoothness condition means that~\eqref{Bx+yOntheleft} equals the value of~\eqref{Bx+yOntheright} when~$x_2 = -1+\ell/2$. This is rewritten as
\mlt{
\frac{2}{\ell}(A(\ell) - f(a)) = 2f'(b) + \frac{2}{\ell}(A(\ell) - f(b)) - \frac{2}{\ell}\big(1+ \frac{\Dr}{\Dl}\big)(A(\ell) - f(b))\\ + 2\Big(A'(\ell)\cdot \big(1+ \frac{\Dr}{\Dl}\big) - f'(b)\Big),
}
which algebraically reduces to~\eqref{ODEForA}. 

It remains to prove~$V$ is diagonally concave on its domain. Since we have proved the~$C^1$-smoothness, it suffices to verify the claim on each of the `halves' of the domain individually. By symmetry, we may concentrate on the right `half'. We need to check that the second derivative of~$V$ along~$e$ is non-positive. By Lemma~\ref{affineLemma}, it suffices to prove the inequalities
\alg{
\label{ineq1}&\frac{\partial^2 V}{\partial e^2}\Big|_{x_2 = -1+\ell/2} \leq 0,\\
\label{ineq2}&\frac{\partial^2 V}{\partial e^2}\Big|_{x_2 = -1} \leq 0.
}
\paragraph{Verification of~\eqref{ineq1}.} In fact, we will prove that this inequality turns into equality. Fix~$\ell$. We see that~$\frac{\partial V}{\partial e}$ is affine as a function of~$x_2$. By the already proved~$C^1$-smoothness,  the value of this affine function equals~\eqref{Bx+yOntheleft} when~$x_2 = -1 + \ell/2$. Thus,
\eq{\label{AffineDerivative}
\frac{\partial V}{\partial e} = k(a,b)(x_2 + 1 - \ell/2) +  \frac{2}{\ell}(A(\ell) - f(a)),
}
where
\eq{\label{expressionFork}
k(a,b) = -\frac{4}{\ell^2}f(a) + \frac{4}{\ell^2}f(b) - \frac{4}{\ell}f'(b);
}
this follows from~\eqref{Bx+yOntheright} by substituting~$x_2=-1$.

Thus,~\eqref{AffineDerivative} implies
\eq{
\frac{\partial^2 V}{\partial e^2}\Big|_{x_2 = -1 + \ell/2} = -\frac{\Dr}{\Dl}k(a,b) + \Big(\frac{2}{\ell}(A(\ell) - f(a))\Big)_{e}^\prime.
}
This expression is identically zero by~\eqref{urlun},~\eqref{ODEForA}, and~\eqref{expressionFork}.

\paragraph{Verification of~\eqref{ineq2}.} When~$x_2=-1$, we differentiate~\eqref{Bx+yOntheright}, plug~$x_2 = -1$, and obtain
\mlt{
\frac{\partial^2 V}{\partial e^2}\Big|_{x_2 = -1} 
= 4f''(b) + 2\Big(\frac{A(\ell) -f(b)}{\ell}\Big)^\prime_{e}  - \frac{4}{\ell^2}\Big(1+ \frac{\Dr}{\Dl}\Big)\big(A(\ell) - f(b)\big) \\+ \frac{4}{\ell}\Big(A'(\ell)\Big(1+\frac{\Dr}{\Dl}\Big) - f'(b)\Big).
}
Using~\eqref{ODEForA} and algebraic transformations, this reduces to
\eq{
4f''(b) + \frac{8}{\ell^2}(f(b) - f(a)) - \frac{8}{\ell}f'(b).
}
By~\eqref{urlun}, this expression equals~$4\Dr$, which is non-positive.
\end{proof}
{
\begin{Rem}
We have described the foliation and the function~$V$ in~$\SFir([a_0,b_0],[a_1,b_1])$. Since~$V$ is symmetric with respect to the~$x_1$-axis, it behaves exactly the same way in the symmetric domain
\eq{\label{SFir-}
\SSet{x\in \Sigma}{\;\frac{b_1-a_1}{2}  - \Big|x_1 -\frac{a_1+b_1}{2}\Big| \leq 1- x_2 \leq \frac{b_0-a_0}{2}  - \Big|x_1 -\frac{a_0+b_0}{2}\Big|}.
}
From now on\textup, when we write~$\SFir([a_0,b_0],[a_1,b_1])$\textup, we mean the union of~\eqref{SFir+} and~\eqref{SFir-}. 
\end{Rem}
}

\subsection{Gluing chords with tangents}\label{s42}
We start with the observation that unlike the situation with the chordal domain in~$\Omega$, there is no simple formula like~\eqref{vallun} for the function~$V$ in~$\SFir$. The values of~$A$, which is a solution of~\eqref{ODEForA}, are expressed as
\eq{\label{ValueTransmissionAlongVerticalFir}
A(\ell) = \frac{\ell}{\ell_0}A(\ell_0) + \ell\int\limits_{\ell}^{\ell_0} \frac{f(a)\Dl + f(b)\Dr}{\Dl + \Dr}\frac{d l}{l^2};
}
the integrand is interpreted as a function of~$l$. {To compute~$A(\ell)$ via~\eqref{ValueTransmissionAlongVerticalFir}, we need to know all the values~$f(a)$ and~$f(b)$ for larger~$\ell$.} The value~$A(\ell_0)$ is a free parameter. In the case~$\ell_0 = 2$, the domain~$\SFir$ meets the symmetric domain at the point~$P(2)$ (see Fig.~\ref{CTFigure}). If the resulting function is {continuous}, then~$B(P(2)) = (f(a_0) + f(b_0))/2$. In other words, in this case
\eq{\label{BoundaryValueSymmetricVerticalFir}
A(2) = \frac{f(a_0) + f(b_0)}{2}.
}

\begin{figure}[h!]
\centering
\includegraphics[height=6cm]{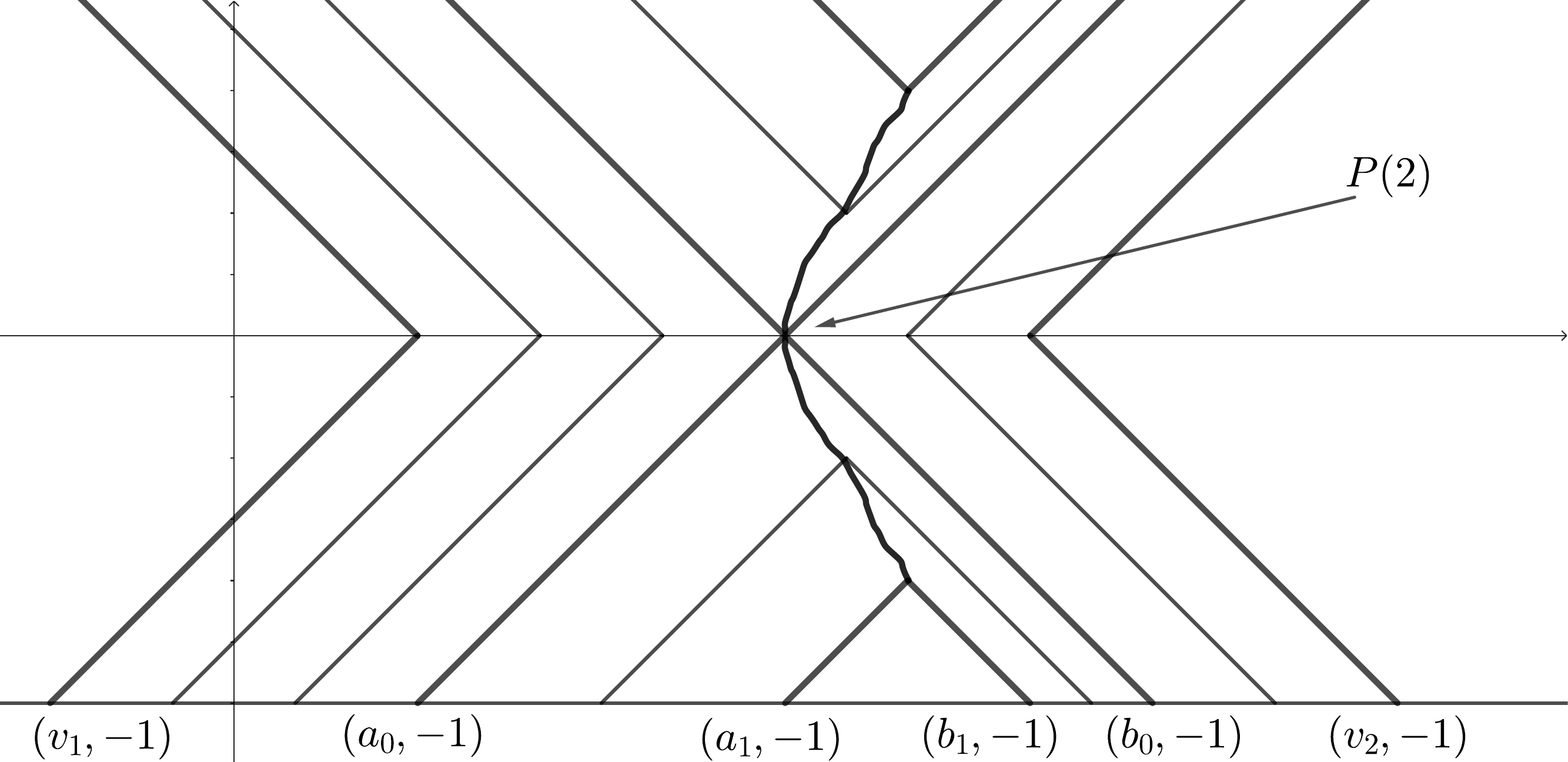}
\caption{A vertical herringbone meets a horizontal herringbone.}
\label{CTFigure}
\end{figure}

It appears that this condition is sufficient to concatenate the functions constructed in Propositions~\ref{HorizontalFurRight} and~\ref{HorizontalFurLift} with the one constructed in Proposition~\ref{VerticalFir}.

\begin{St}\label{ChordalDomainProposition}
Let~$b_0 - a_0 = 2$. Consider the domains~$\SFir([a_0,b_0], [a_1,b_1])$\textup, $\SRt(b_0,v_2)$\textup, and~$\SLt(v_1,a_0)$. 
We define the function~$V$ on~$\SFir([a_0,b_0], [a_1,b_1])$  by formulas~\eqref{FirTreeFunction}\textup, 
\eqref{ValueTransmissionAlongVerticalFir}\textup, and~\eqref{BoundaryValueSymmetricVerticalFir}. 
For the values on~$\SRt(b_0,v_2)$ and~$\SLt(v_1,a_0)$\textup, we use the initial data
\eq{\label{ChordsTangentsGluing}
\Mlt(a_0) = \Mrt(b_0) = \frac{f(b_0) - f(a_0)}{2}
}
and define~$V$ {by~\eqref{difeq} and by~\eqref{BFRT} on~$\SRt(b_0,v_2)$ and  by~\eqref{difeq2} and by~\eqref{BFLT} 
on~$\SLt(v_1,a_0)$}. If the function~$f$ falls under the scope of Propositions~\textup{\ref{HorizontalFurRight}, 
\ref{HorizontalFurLift},} and~\emph{\ref{VerticalFir}}\textup, then~$V$ is a~$C^1$\!-smooth diagonally concave function solving~\eqref{StrangeEquation} on  each of  these domains.
\end{St}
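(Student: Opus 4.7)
The plan is as follows. By Propositions \ref{HorizontalFurRight}, \ref{HorizontalFurLift}, and \ref{VerticalFir}, each of the three pieces of $V$ is already a $C^1$\!-smooth diagonally concave solution to \eqref{StrangeEquation} on its respective subdomain. The remaining task is therefore to verify $C^1$\!-smoothness across the two interfaces $\SFir \cap \SRt$ (the segment $b=b_0$) and $\SFir \cap \SLt$ (the segment $a=a_0$); once this is established, global diagonal concavity follows from the routine fact that a $C^1$\!-smooth concatenation of diagonally concave functions is diagonally concave. By the symmetry of $V$ with respect to the $x_1$\!-axis one may restrict attention to the lower half $x_2 \leq 0$, and by the symmetry between the two interfaces we carry out the matching only for $b=b_0$; this interface is the segment from $(b_0,-1)$ to $P(\ell_0) = (\tfrac{a_0+b_0}{2},0)$, parallel to the direction $(1,-1)$.

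The key observation is that both relevant pieces of $V$ are affine along $(1,-1)$: for the $\SFir$-piece this is built into \eqref{FirTreeFunction}, while for the $\SRt$-piece with $x_2<0$ it follows because $\vr = x_1+1+x_2$ is invariant under translation by $(1,-1)$. Consequently, continuity of $V$ on the entire interface reduces to continuity at its two endpoints. At the corner $(b_0,-1)$ both formulas produce $f(b_0)$. At the midpoint $P(\ell_0)$, the $\SFir$-value is $A(\ell_0) = \tfrac{1}{2}(f(a_0)+f(b_0))$ by \eqref{BoundaryValueSymmetricVerticalFir}, while the $\SRt$-value equals $\Mrt(b_0)\bigl(\tfrac{a_0+b_0}{2} - b_0\bigr) + f(b_0) = f(b_0) - \Mrt(b_0)$; the two coincide precisely because of the initial data \eqref{ChordsTangentsGluing}.

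To finish $C^1$\!-smoothness on the interface it remains to match the transversal directional derivative at one point, and the simplest choice is $e=(1,1)$ at $(b_0,-1)$. On the $\SRt$-side, formulas \eqref{Vx1} and \eqref{Vx2} yield $\partial_e V = \Mrt(b_0)+2\Mrt'(b_0)$. On the $\SFir$-side, formula \eqref{Bx+yOntheright} evaluated at $\ell = \ell_0 = 2$ and $x_2=-1$ collapses (the two summands carrying the factor $x_2+1$ vanish) to $\partial_e V = 2f'(b_0) + A(\ell_0) - f(b_0)$; after substituting \eqref{BoundaryValueSymmetricVerticalFir} and \eqref{ChordsTangentsGluing}, the required identity reduces to $\Mrt'(b_0) = f'(b_0) - \Mrt(b_0)$, which is exactly equation \eqref{difeq} at $u=b_0$ and hence automatic from the construction of $\Mrt$. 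The analogous matching for the interface $a=a_0$ uses \eqref{difeq2} in place of \eqref{difeq}. The main bookkeeping obstacle in writing this out is the apparent complexity of \eqref{Bx+yOntheright}, with its coefficients in $\Dr/\Dl$ and $A'(\ell_0)$; the point is that by choosing to match at $x_2=-1$ rather than at a generic interior point of the interface those terms drop out, which is why the matching is ultimately equivalent to nothing more than the ODE defining $\Mrt$.
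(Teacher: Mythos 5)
Your overall strategy matches the paper's (reduce to $C^1$-smoothness across the interfaces, use symmetry to consider only one interface, and exploit the affineness of both pieces along the direction $(1,-1)$ parallel to that interface), and your continuity argument and the computation at the corner $(b_0,-1)$ are correct. However, there is a genuine gap: you claim that it suffices to match the transversal derivative \emph{at one point}. This is not sufficient. By Lemma~\ref{affineLemma}, since both pieces of $V$ are affine along $(1,-1)$, their directional derivatives along $(1,1)$ restricted to the interface segment $[P(\ell_0),(b_0,-1)]$ are \emph{affine} functions of the interface parameter, not constants. (Indeed, on the $\SRt$-side one has $\partial_e V = m(b_0) - 2m'(b_0)x_2$, which varies along the interface.) Two affine functions agreeing at one point need not coincide; you must also check the match at the second endpoint $P(\ell_0)$. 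The paper does precisely this, invoking Lemma~\ref{affineLemma} to reduce to checking the two endpoints, then verifying the $(1,0)$-derivative at $(b_0,-1)$ (both sides give $f'(b_0)$) and the $(1,1)$-derivative at $P(2)$ (both sides give $\tfrac{1}{2}(f(b_0)-f(a_0))$, using \eqref{ChordsTangentsGluing} and \eqref{BoundaryValueSymmetricVerticalFir}).

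The missing check does go through: at $P(2)$ the $\SRt$-side gives $\partial_e V = \Mrt(b_0)$ (since $x_2=0$ there), while the $\SFir$-side gives $\partial_e V = A(2) - f(a_0) = \tfrac{1}{2}(f(b_0)-f(a_0))$ by \eqref{Bx+yOntheleft} and \eqref{BoundaryValueSymmetricVerticalFir}, and these coincide by \eqref{ChordsTangentsGluing}. So your proposal is repairable, but as written it omits a necessary verification.
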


\begin{proof}
It suffices to verify that~$V$ is a~$C^1$-smooth function. The conditions~\eqref{BoundaryValueSymmetricVerticalFir} and~\eqref{ChordsTangentsGluing} guarantee the continuity of~$V$. By symmetry, we may restrict our attention to the~$C^1$-smoothness in a neighborhood of the segment~$[P(2), (b_0,-1)]$. By Lemma~\ref{affineLemma}, it is enough to prove differentiability in a neighborhood of the two endpoints. As usual, we may prove coincidence of directional derivatives in a transversal direction. For the point~$(b_0,-1)$, we choose derivatives with respect to~$(1,0)$: they both coincide with~$f'(b_0)$. For the point~$P(2)$, we choose the direction~$(1,1)$: both derivatives coincide with~$\frac{f(b_0) - f(a_0)}{2}$ by construction.
\end{proof}

\begin{Rem}\label{RemarkChords}
It is shown in  Propositions~$3.3.7$ and~$3.3.8$ of~\textup{\cite{ISVZ2018}} that exactly the same 
condition~\eqref{ChordsTangentsGluing} guarantees the~$C^1$\!-smoothness of the concatenation~$B$ of the functions 
on the domains~$\Lt(u_1,a_0)$\textup, $\Rt(b_0,u_2)$\textup, and~$\Ch([a_0,b_0], [a_1,b_1])$ described in 
Propositions~\textup{\ref{321ISVZ}, \ref{322ISVZ},} and~\textup{\ref{LightChordalDomainCandidate}}. 
In particular,~$B(x_1,x_1^2 + 1) = V(x_1,0)$ when~$x_1 \in (u_1+1,u_2-1)$.
\end{Rem}

\subsection{Example}\label{s43}
\paragraph{The case~$f(t) = |t|^p$ with~$p \in [1,2]$.} The function~$\B$ for this boundary data was found in~\cite{SlavinVasyunin2012}. Note that this function does not fulfill Condition~\ref{reg}. One may overcome this peculiarity by approximation. The foliation for~$\B$ consists of the full chordal domain~$\Ch([-1,1],[0,0])$ and the tangent domains~$\Lt(-\infty,-1)$ and~$\Rt(1,\infty)$. The chordal domain is symmetric and the functions~$a$ and~$b$ have very simple form:
\eq{
a(\ell) = -\frac{\ell}{2}\qquad b(\ell) = \frac{\ell}{2}.
}
According to Proposition~\ref{ChordalDomainProposition}, we may describe the function~$\VB$ as follows. The values of~$\VB$ on~$\SRt(1,\infty)$ are 
\eq{
\VB(x) = v^p +(x_1 - v)e^{-v}\int\limits_{1}^vpt^{p-1}e^t\,dt,\qquad v=\vl(x) =  x_1 + 1 - |x_2|.
}
For the values on the vertical herringbone, we use~\eqref{ValueTransmissionAlongVerticalFir}:
\eq{
\Dr = \Dl = p(p-2)(\ell/2)^{p-2};\qquad A(\ell) = \frac{\ell}{2} + \ell \int\limits_{\ell}^2 (l/2)^p\,\frac{dl}{l^2} = \frac{\ell}{2} + \frac{2^{-p}\ell}{p-1}\Big(2^{p-1} - \ell^{p-1}\Big).
}
Therefore,
\eq{
A(\ell) - f(b) = \frac{p}{p-1}\Big(\frac{\ell}{2} - \Big(\frac{\ell}{2}\Big)^{p}\Big),
}
and, according to~\eqref{FirTreeFunction},
\eq{
\VB(x) = \Big(\frac{\ell}{2}\Big)^p \!\!+ \frac{p}{p-1}(1-|x_2|)\Big(1-\Big(\frac{\ell}{2}\Big)^{p-1}\Big),\quad \ell = 2 + 2|x_1| - 2|x_2|,\quad x\in \SFir([-1,1],[0,0]).
}

\section{Other linearity domains}\label{S5}
\subsection{Trolleybuses}\label{s51}
We describe a linearity domain for~$\B$ called a \emph{trolleybus}. It has two points~$(a_0,a_0^2)$ and~$(b_0,b_0^2)$ on the lower boundary, a chordal domain below it and two equally oriented tangent domains adjacent to it from the sides. If both tangents are left, it is a left trolleybus. If both tangents are right, it is a right trolleybus, see Fig.~\ref{RightTrolleyFigure} for visualization. We denote the trolleybuses by~$\LTroll(a_0,b_0)$ and~$\RTroll(a_0,b_0)$. One may see that the values of the affine function inside the trolleybus are completely defined by the values~$f(a_0)$,~$f(b_0)$,~$f'(a_0)$, and~$f'(b_0)$, provided the said affine function might be extended to a~$C^1$-smooth function satisfying the boundary conditions~\eqref{BCForBMO} on a neighborhood of the trolleybus. The corresponding system of linear equations is overdetermined (there are three coefficients of an affine function and four equations). The compatibility conditions are given by the cup equation~\eqref{urlun} for~$a_0$ and~$b_0$. More precisely, if
\eq{\label{InTrolley}
B(y) = \beta_2y_2 + \beta_1y_1 + \beta_0,\qquad y\in \RTroll(a_0,b_0),
}
then, as formula~$(3.4.10)$ in~\cite{ISVZ2018} says,
\begin{equation}\label{coef}
\begin{split}
&\beta_0=\frac{b_0 f(a_0)-a_0 f(b_0)}{b_0-a_0}+\frac{1}{2}a_0 b_0\av{f''}{[a_0,b_0]};\\
&\beta_1 = \av{f'}{[a_0,b_0]} - \frac{1}{2}(b_0+a_0)\av{f''}{[a_0,b_0]};\\
&\beta_2 = \frac{1}{2}\av{f''}{[a_0,b_0]}.
\end{split}
\end{equation} 

\begin{figure}[h!] 
\centering
\includegraphics[height=6cm]{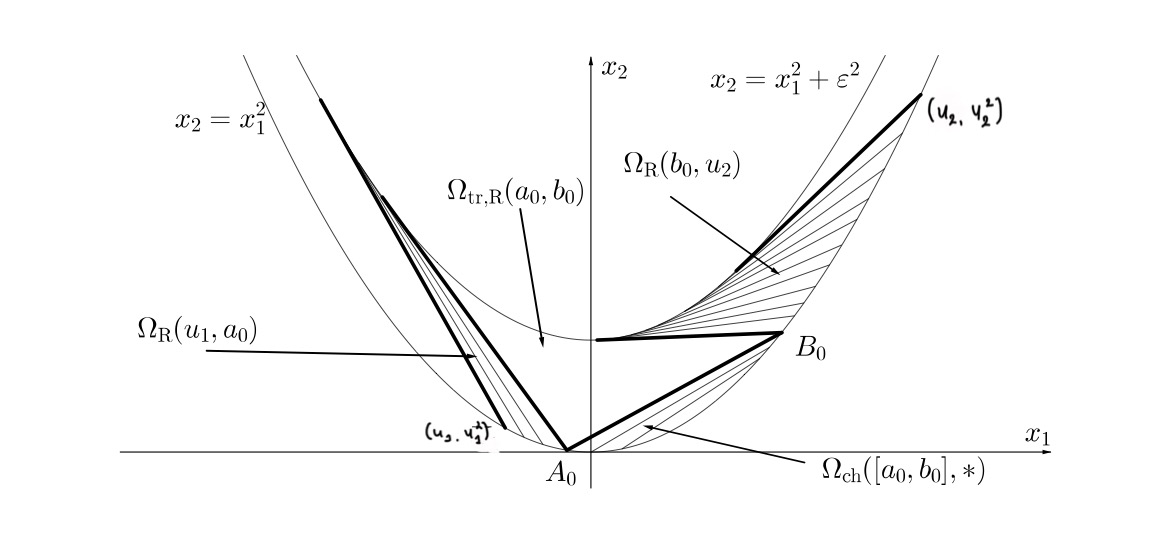}
\caption{A right trolleybus}
\label{RightTrolleyFigure}
\end{figure}

We cite Proposition~$3.4.12$ from~\cite{ISVZ2018} that describes the conditions for a~$C^1$-smooth concatenation of the affine function in a trolleybus with the functions on adjacent figures.

\begin{St}[Proposition~$3.4.12$ in~\cite{ISVZ2018}]\label{RTrollProp}
Let $u_1<a_0<b_0<u_2$ and let $b_0-a_0\le 2$. Let the function~$B$ coincide with the functions given by 
Proposition~\textup{\ref{321ISVZ}} on~$\Rt(u_1,a_0)$ and $\Rt(b_0,u_2)$\textup, by 
Proposition~\textup{\ref{LightChordalDomainCandidate}} on{\footnote{The following notation is borrowed from~\cite{ISVZ2018}. It is used when one of the border chords of a chordal domain is unimportant.}}~$\Ch([a_0,b_0],*)$\textup, and by 
formulas~\eqref{InTrolley}\textup, \eqref{coef} in~$\RTroll(a_0,b_0)$. Suppose that 
\begin{align}
\Mrt(a_0) = f'(a_0) - \av{f''}{[a_0,b_0]}\label{mrta0};\\ 
\Mrt(b_0) = f'(b_0) - \av{f''}{[a_0,b_0]}\label{mrtb0}.
\end{align}
Then\textup,~$B$ is a~$C^1$\!-smooth locally concave function on
\eq{
\Rt(u_1,a_0) \cup \RTroll(a_0,b_0) \cup \Ch([a_0,b_0],*) \cup \Rt(b_0,u_2)
}
{solving the homogeneous Monge--Amp\`ere equation~\eqref{MongeAmpere} on each of the domains of the partition.}
\end{St}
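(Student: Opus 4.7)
The plan is to combine local properties on each of the four subdomains with $C^1$-matching across the three line interfaces. Local concavity and the homogeneous Monge--Amp\`ere equation on each individual piece are already available: on $\Rt(u_1,a_0)$ and $\Rt(b_0,u_2)$ by Proposition~\ref{321ISVZ}; on $\Ch([a_0,b_0],*)$ by Proposition~\ref{LightChordalDomainCandidate}; on $\RTroll(a_0,b_0)$ the function is affine by~\eqref{InTrolley}, so both properties hold trivially. Once $C^1$-smoothness of the concatenation is established, local concavity on the union follows, since a $C^1$ function which is concave on each side of a smooth interface is locally concave through it.

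A key observation is that along each of the three interfaces --- the two bounding tangents and the chord from $(a_0,a_0^2)$ to $(b_0,b_0^2)$ --- both adjoining pieces of $B$ are affine. The trolleybus piece is affine by~\eqref{InTrolley}; the tangent piece is affine along the tangent because $\ur\equiv a_0$ there collapses~\eqref{linearity} to a linear function of $y_1$; the chord piece is affine along the chord by~\eqref{vallun}. A short direct computation using~\eqref{linearity},~\eqref{difeq}, and the explicit formula~\eqref{urFormula} shows, moreover, that the 2D gradient $\nabla B$ of the tangent piece is actually constant along the whole tangent; the analogous statement for the chord piece follows from the foliation structure in Proposition~\ref{LightChordalDomainCandidate}. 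Consequently, checking $C^1$-gluing across any one of the three interfaces reduces to matching values and the full gradient at a single point of it.

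I would carry out these checks at the vertices $(a_0,a_0^2)$ and $(b_0,b_0^2)$. Value matching is immediate: substituting $(a_0,a_0^2)$ into~\eqref{InTrolley} and simplifying via~\eqref{coef} --- using the cup equation $\av{f'}{[a_0,b_0]}=(f(b_0)-f(a_0))/(b_0-a_0)$ --- yields $f(a_0)$, and similarly $f(b_0)$ at the other vertex, in agreement with the boundary condition. For gradient matching between the tangent piece and the trolleybus at $(a_0,a_0^2)$, direct differentiation gives $\nabla B=\bigl((1-a_0)m'(a_0)+\Mrt(a_0),\,\tfrac12 m'(a_0)\bigr)$; equating with $(\beta_1,\beta_2)$ from~\eqref{coef} and eliminating $m'(a_0)$ via~\eqref{difeq} collapses both component equations into the single scalar identity~\eqref{mrta0}, with the redundancy again coming from the cup equation. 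The symmetric calculation at $(b_0,b_0^2)$ produces~\eqref{mrtb0}. The trolleybus-to-chord interface then requires no extra condition: values already match at both endpoints of the chord, and matching one transversal derivative at one interior point reduces, via~\eqref{vallun}, to the cup equation~\eqref{urlun} at $(a_0,b_0)$, which is built into the definition of $\Ch([a_0,b_0],*)$. The only real technical nuisance is bookkeeping: the three unknowns $(\beta_0,\beta_1,\beta_2)$ are constrained by four matching conditions, and the cup equation at $(a_0,b_0)$ is precisely the compatibility relation making~\eqref{coef} the unique solution.
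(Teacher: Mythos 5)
The paper does not actually prove this statement: it is Proposition~3.4.12 of the reference~\cite{ISVZ2018}, and the present paper simply cites it (the footnote even borrows notation from there). So there is no ``paper's own proof'' to compare against; I can only assess your argument against what such a proof would have to contain.

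Your argument is essentially correct and follows the natural route. The reduction ``local concavity and Monge--Amp\`ere on each piece $+$ $C^1$-gluing across the three interfaces'' is exactly what is needed, and the crucial structural facts you invoke are true: the gradient of the tangent piece depends only on $\ur(y)$, so it is constant along each right tangent (your explicit formula $\nabla B=(m(u)+(1-u)m'(u),\tfrac12 m'(u))$ is correct), the trolleybus piece has constant gradient $(\beta_1,\beta_2)$ by affineness, and the chord piece has constant gradient along its chord by the Monge--Amp\`ere foliation structure. Matching the two components of the gradient at $(a_0,a_0^2)$, and eliminating $m'(a_0)=f'(a_0)-\Mrt(a_0)$ via~\eqref{difeq}, indeed yields~\eqref{mrta0} as the only new condition, the other scalar equation collapsing to the cup equation~\eqref{urlun} at $(a_0,b_0)$, which is guaranteed by the hypotheses of Proposition~\ref{LightChordalDomainCandidate}. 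Symmetrically for~\eqref{mrtb0}. The chord interface is then automatic. One small but genuine slip: the identity $\av{f'}{[a_0,b_0]}=\bigl(f(b_0)-f(a_0)\bigr)/(b_0-a_0)$ you used for the value check at $(a_0,a_0^2)$ is just the fundamental theorem of calculus, not the cup equation; the cup equation is $\bigl(f(b_0)-f(a_0)\bigr)/(b_0-a_0)=\tfrac12\bigl(f'(a_0)+f'(b_0)\bigr)$, which you do need (correctly) in the gradient step but not in the value step. You also implicitly rely on two standard but not-entirely-free facts that a polished write-up should at least cite: that a $C^1$ function which is locally concave on each closed piece of a partition by smooth curves is locally concave on the union, and that Monge--Amp\`ere solutions of the form~\eqref{vallun} have constant gradient along the leaves of the foliation.
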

In the case of a left trolleybus, equations~\eqref{mrta0} and~\eqref{mrtb0} should be replaced with
\begin{align}
\Mlt(a_0) = f'(a_0) + \av{f''}{[a_0,b_0]}\label{mlta0};\\ 
\Mlt(b_0) = f'(b_0) + \av{f''}{[a_0,b_0]}\label{mltb0}.
\end{align}

Now we pass to the description of a similar foliation in~$\Sigma$. It will be called a \emph{corner}, see Fig.~\ref{RightCornerFigure} for visualization. A corner might be right or left. We will not provide formulas and simply describe a right corner as a domain of bilinearity lying between~$\SFir([a_0,b_0],[a_1,b_1])$,~$\SRt(v_1,a_0)$, and~$\SRt(b_0,v_2)$. Here~$v_1 < a_0 < b_0 < v_2$ and~$b_0 - a_0 \leq 2$. We denote it by~$\RCorn(a_0,b_0)$. We assume that the function~$V$ is given by Proposition~\ref{HorizontalFurRight} inside the domains~$\SRt(v_1,a_0)$ and~$\SRt(b_0,v_2)$ and by Proposition~\ref{VerticalFir} inside~$\SFir([a_0,b_0],[a_1,b_1])$. We also assume~$V$ is bilinear inside the corner:
\eq{\label{InCorner}
V(x) = \alpha_{11}(x_1^2 - x_2^2) + \alpha_1x_1 + \alpha_0, \qquad x\in \RCorn(a_0,b_0).
}
As it happened before, there is no term with~$x_2$ since~$V$ is symmetric with respect to~$x_2$.

\begin{figure}[h!] 
\centering
\includegraphics[height=6cm]{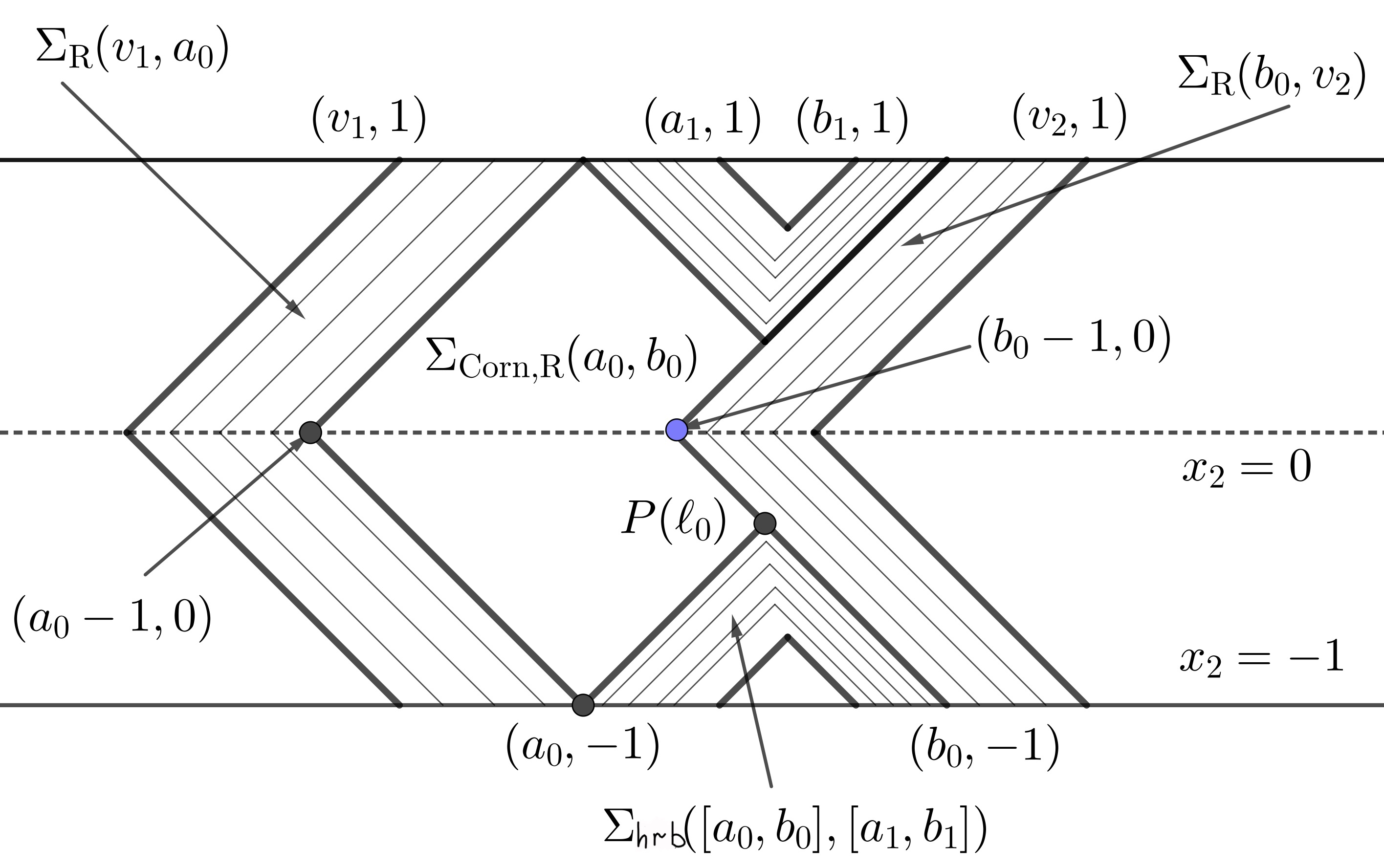}
\caption{A right corner}
\label{RightCornerFigure}
\end{figure}

The continuity of~$V$ on the common borders of~$\RCorn(a_0,b_0)$ with~$\SRt(v_1,a_0)$ and~$\SRt(b_0,v_2)$ yields
\alg{
\alpha_{11}(a_0^2 - 1) + \alpha_1 a_0 + \alpha_0 = f(a_0);\\
\alpha_{11}(b_0^2 - 1) + \alpha_1 b_0 + \alpha_0 = f(b_0);\\
2\alpha_{11}(a_0 - 1) + \alpha_1 = \Mrt(a_0);\\
2\alpha_{11}(b_0-1) + \alpha_1 = \Mrt(b_0).
}
Note that this linear system is overdetermined and the compatibility condition is exactly the cup equation~\eqref{urlun}. Now we assume~\eqref{mrta0} and~\eqref{mrtb0} hold true. Solving the system with these identities in mind, we obtain
\begin{equation}\label{coef2}
\begin{split}
&\alpha_0=\frac{b_0 f(a_0)-a_0 f(b_0)}{b_0-a_0}+\frac{1}{2}a_0 b_0\av{f''}{[a_0,b_0]} + \alpha_{11};\\
&\alpha_1 = \av{f'}{[a_0,b_0]} - \frac{1}{2}(b_0+a_0)\av{f''}{[a_0,b_0]};\\
&\alpha_{11} = \frac{1}{2}\av{f''}{[a_0,b_0]}.
\end{split}
\end{equation} 

\begin{St}\label{TrolleyToCorner}
Let $v_1<a_0<b_0<v_2$ and let $b_0-a_0\le 2$. Let the function~$V$ coincide with the functions given by 
Proposition~\textup{\ref{HorizontalFurRight}} on~$\SRt(v_1,a_0)$ and $\SRt(b_0,v_2)$\textup, by 
Proposition~\textup{\ref{VerticalFir}} on~$\SFir([a_0,b_0],*)$\textup, and by formulas~\eqref{InCorner}\textup, 
\eqref{coef2} in~$\RCorn(a_0,b_0)$. Assume~\eqref{mrta0} and~\eqref{mrtb0}.
Then\textup, $V$ is a~$C^1$\!-smooth diagonally concave function on
\eq{\label{PartitionAroundCorner}
\SRt(v_1,a_0) \cup \RCorn(a_0,b_0) \cup \SFir([a_0,b_0],*) \cup \SRt(b_0,v_2)
}
solving~\eqref{StrangeEquation} on each of domain of this partition.
\end{St}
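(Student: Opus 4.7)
The plan is to follow the template of Propositions~\ref{TransferToSquare} and~\ref{ChordalDomainProposition}, with Proposition~\ref{RTrollProp} serving as the $\Omega$-side blueprint. On each of the four subdomains in the partition~\eqref{PartitionAroundCorner}, $V$ is already a diagonally concave solution of~\eqref{StrangeEquation}: on the two horizontal herringbones by Proposition~\ref{HorizontalFurRight}, on the vertical herringbone by Proposition~\ref{VerticalFir}, and on $\RCorn(a_0,b_0)$ by inspection, since $x_1^2 - x_2^2$ is both diagonally concave and diagonally convex and any affine function trivially satisfies~\eqref{StrangeEquation}. Once $V$ is verified to be $C^1$ across each interface, diagonal concavity on the full union follows from the elementary fact that a $C^1$ function which is diagonally concave on each of two adjacent regions is diagonally concave on their union. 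Hence the whole task reduces to $C^1$-smoothness at the interfaces bounding $\RCorn(a_0,b_0)$.

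For continuity, I would observe that the coefficients~\eqref{coef2} are by construction the unique solution of the four-equation linear system displayed just before~\eqref{coef2}, matching $V$ with the neighboring pieces at the vertices $(a_0,\pm 1)$, $(b_0,\pm 1)$ of the corner. This overdetermined system (four equations, three unknowns) is consistent precisely because of the cup equation~\eqref{urlun} at the pair $(a_0,b_0)$ together with the boundary prescriptions~\eqref{mrta0}, \eqref{mrtb0}. Consistency delivers continuity of $V$ on the whole union, mirroring the analogous compatibility analysis on the $\Omega$-side inside Proposition~\ref{RTrollProp}.

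For $C^1$-smoothness I plan to invoke Lemma~\ref{affineLemma} as in the earlier propositions. Every interface bounding the corner is built of segments parallel to $(1,1)$ or $(1,-1)$ and, on both sides of each such segment, $V$ is affine along that direction. By Lemma~\ref{affineLemma} the transversal directional derivative is therefore linear along the interface, so it suffices to check coincidence of a single transversal directional derivative at the two endpoints of each interface segment. Along the two interfaces shared with the horizontal herringbones this reduces, after substituting~\eqref{Vx1} and~\eqref{coef2}, to the identities~\eqref{mrta0} and~\eqref{mrtb0}. The hard part will be the interface shared with the vertical herringbone, because $A(\ell)$ is available only implicitly via~\eqref{ODEForA}; however, I expect the needed gluing identity at the endpoints $(a_0,\pm 1)$ and $(b_0,\pm 1)$ to collapse to the relation $\Mrt(b_0)=(f(b_0)-f(a_0))/(b_0-a_0)$ already encountered in Proposition~\ref{ChordalDomainProposition} (cf.~\eqref{ChordsTangentsGluing}), which is itself an algebraic consequence of~\eqref{mrta0}, \eqref{mrtb0}, and~\eqref{urlun}. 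This closes the smoothness check and, together with the first paragraph, yields the claim.
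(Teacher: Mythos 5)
Your overall plan mirrors the paper's: the four subdomains are handled by the earlier propositions, so everything reduces to $C^1$-matching across the interfaces bounding $\RCorn(a_0,b_0)$, which you propose to check at the corner points of the interface polygon using Lemma~\ref{affineLemma}. This is exactly how the paper argues (it checks gradient coincidence at $(a_0-1,0)$, $(a_0,-1)$, $P(\ell_0)$, $(b_0-1,0)$, with a direct computation at $(a_0,-1)$).

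However, your final paragraph contains a genuine error. You expect the gluing condition along the interface with the vertical herringbone to collapse to the identity $\Mrt(b_0)=(f(b_0)-f(a_0))/(b_0-a_0)$ and to be a consequence of~\eqref{mrta0}, \eqref{mrtb0}, and~\eqref{urlun}. That identity fails when $b_0-a_0<2$. Indeed, by~\eqref{mrtb0} and~\eqref{urlun},
\eq{
\Mrt(b_0)=f'(b_0)-\frac{f'(b_0)-f'(a_0)}{b_0-a_0},\qquad
\frac{f(b_0)-f(a_0)}{b_0-a_0}=\frac{f'(a_0)+f'(b_0)}{2},
}
and these two expressions coincide only when $b_0-a_0=2$. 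Proposition~\ref{ChordalDomainProposition}, from which you borrow the relation~\eqref{ChordsTangentsGluing}, explicitly assumes $b_0-a_0=2$; the present proposition assumes only $b_0-a_0\le 2$. In fact, the interface with the vertical herringbone does not impose any additional condition beyond continuity: once $A(\ell_0)$ is fixed by continuity at $P(\ell_0)$, the $(1,1)$- and $(1,-1)$-derivatives of the bilinear function~\eqref{InCorner} along the two legs of the interface are affine in position and therefore determined by the (matching) endpoint values $f(a_0)$, $A(\ell_0)$, $f(b_0)$; these automatically agree with the corresponding directional derivatives of $V|_{\SFir}$. The nontrivial constraints sit at the interfaces with the horizontal herringbones, where~\eqref{mrta0}, \eqref{mrtb0}, the ODE~\eqref{difeq}, and the cup equation~\eqref{urlun} are all used (e.g.\ at $(a_0,-1)$ both sides have $x_1$-derivative $f'(a_0)$ via~\eqref{urlun} and~\eqref{coef2}; at $(a_0-1,0)$ one additionally invokes~\eqref{mrta0}). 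So the ``hard part'' you isolate is in fact the easy part, and the specific identity you invoke there is false in the generality required.
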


\begin{proof}
We have already verified that~$V$ is a continuous function, diagonally concave on each of the domains in the partition~\eqref{PartitionAroundCorner}. Thus, it remains to verify that~$V$ is a~$C^1$-smooth function. By {symmetry and} Lemma~\ref{affineLemma}, we may prove the~$C^1$-continuity of the concatenation at the points~$(a_0-1,0)$,~$(a_0,-1)$,~$P(\ell_0)$, and~$(b_0-1,0)$.

Let us consider the case of the point~$(a_0,-1)$. Let~$V|_{\SRt(v_1,a_0)}$ be~$V_1$ and let~$V|_{\RCorn(a_0,b_0)}$ be~$V_2$. We need to check that
\eq{\label{GradientCoincidence}
\nabla V_1(a_0,-1) = \nabla V_2(a_0,-1).
}
We know that both functions are~$C^1$-smooth on their domains. Moreover, the directional derivatives with respect to~$(1,-1)$ coincide since both functions are affine in this direction. It suffices to find another direction such that the corresponding derivatives of~$V_1$ and~$V_2$ coincide. We choose~$(1,0)$. By smoothness of~$V_1$, its derivative w.r.t.~$x_1$ at~$(a_0,-1)$ equals~$f'(a_0)$. By~\eqref{InCorner},
\eq{
\frac{\partial V_2}{\partial x_1}(a_0,-1) = 2\alpha_{11}a_0 + \alpha_1 \Eeqref{coef2} f'(a_0).
}
Thus, we have proved~\eqref{GradientCoincidence}.

The cases {of the points~$(a_0-1,0)$,~$P(\ell_0)$, and~$(b_0-1,0)$} are completely similar.
\end{proof}

\begin{Rem}\label{RemarkTrolleybus}
By formulas~\eqref{InTrolley} and~\eqref{InCorner}\textup,
\alg{
B(x_1,x_1^2 + 1) &= \beta_2(x_1^2 + 1) + \beta_1x_1 + \beta_0;\\
V(x_1,0) &= \alpha_{11}x_1^2 + \alpha_1x_1 + \alpha_0,
}
provided~$(x_1,0)\in \RCorn(a_0,b_0)$ or~$(x_1,0)\in \LCorn(a_0,b_0)$. Thus, the assertion of 
Theorem~\textup{\ref{TheoremForDiagonal}} in the case of a trolleybus and the corresponding corner reduces to
\eq{
\alpha_{11} = \beta_1,\quad \alpha_1 = \beta_1,\quad \alpha_0 = \beta_0 + \beta_2.
}
This follows directly from~\eqref{coef} and~\eqref{coef2}.
\end{Rem}

\begin{Rem}
There is yet another type of  linearity domains with two points on the lower boundary of~$\Omega$ called {\bf birdie}. A birdie is a linearity domain with two points on the fixed boundary bounded by a right tangent from the left and by a left tangent from the right. We will not provide a separate study for the transference of functions from birdies to~$\Sigma$\textup, because\textup, as formula~$(3.4.19)$ in~\textup{\cite{ISVZ2018}} says\textup, a birdie may be treated as a union of a trolleybus and an angle\textup; there are two ways to represent a birdie in this way. We transfer the function from the trolleybus to the corner according to Proposition~\textup{\ref{TrolleyToCorner},} transfer the function from the angle to the square according to Proposition~\textup{\ref{TransferToSquare},} and obtain the bilinear function on the union of the corner and the square due to~$C^1$\!-smoothness.
\end{Rem}

\subsection{Other linearity domains}\label{s52}
We start with a survey of those linearity domains of minimal locally concave functions on~$\Omega$ that have more than two points on the lower boundary. Let~$\mathfrak{L}$ be such a domain. Let~$B$ be an affine function on~$\El$:
\eq{\label{AffineFunctionInLinearityDomain}
B(x) = \beta_0 + \beta_1x_1 + \beta_2x_2,\qquad x\in \El.
}
Assume that the intersection of~$\El$ with the lower parabola consists of a finite number of arcs. Call them~$\{\Arc_i\}_{i=1}^n$ 
and assume that
\eq{
\Arc_i = \set{(t,t^2)}{t\in \arc_i},\qquad i=1,2,\ldots, n,
}
where~$\arc_i = [\arcl_i,\arcr_i]$, and $\arcr_i < \arcl_{i+1}$ for $i=1,\dots,n-1$.

Note that~$\arcl_{i+1} - \arcr_i < 2$. We assume~$\El$ intersects the upper boundary,~i.\,e.,~$\arcr_n - \arcl_1 > 2$. Since~$B$ satisfies the boundary condition~\eqref{BCForBMO},
\eq{\label{ValuesGluing}
f(t) = \beta_0 + \beta_1t+ \beta_2t^2,\qquad t\in \bigcup_{i=1}^n\arc_i,
}
and, by~$C^1$-smoothness,
\eq{\label{DerivativesGluing}
\beta_1 + 2t\beta_2 - f'(t) = 0,\qquad t\in \bigcup_{i=1}^n\arc_i.
}
Consider a chord with the endpoints $\big(\arcr_i,(\arcr_i)^2\big)$ and $\big(\arcl_{i+1},(\arcl_{i+1})^2\big)$. These two points 
satisfy the cup equation~\eqref{urlun}, provided there is a chordal domain adjacent to~$\El$ from below along this chord. Lemma~$3.4.17$ in~\cite{ISVZ2018} says that in such a case (we also assume~\eqref{ValuesGluing} and~\eqref{DerivativesGluing}) any two points in~$\cup_{i=1}^n\Arc_i$ satisfy the cup equation. One may prove a reverse implication: if any two points in~$\cup_{i=1}^n\Arc_i$ satisfy the cup equation, then there exist~$\beta_0,\beta_1$, and~$\beta_2$ such that~\eqref{ValuesGluing} and~\eqref{DerivativesGluing} hold true. In such a case, the function~$B$ constructed by~\eqref{AffineFunctionInLinearityDomain} in~$\El$ and the functions constructed by~\eqref{vallun} in the chordal domains~$\Ch([\arcr_i,\arcl_{i+1}],*)$,~$i=1,2,\ldots, n-1$, concatenate in a~$C^1$-smooth way. We note that the coefficients~$\beta_0, \beta_1,$ and~$\beta_2$ are defined by~\eqref{coef}, where~$a_0$ and~$b_0$ are replaced with any two distinct points in~$\cup_i\arc_i$ (the result does not depend on this particular choice).

Consider now the point~$\big(\arcr_n,(\arcr_n)^2\big)$. There is a tangent (left or right) that passes through this point and separates~$\El$ from a tangent domain (either left or right) lying on the right of this linearity domain. If this is a right tangent, then the condition of~$C^1$-smooth concatenation between~$B$ and the function given by~\eqref{linearity} and~\eqref{difeq} in~$\Rt(\arcr_n,*)$ is
\eq{\label{MultiTangentR}
\Mrt(\arcr_n) = f'(\arcr_n) - \av{f''}{[\arcl_1,\arcr_n]},
} 
the same as for the right trolleybus~\eqref{mrtb0} (the point~$\arcl_1$ in the formula above may be replaced with any other point in~$\cup_{i=1}^n\arc_i$ by Lemma~$3.4.16$ in~\cite{ISVZ2018}). If the tangent passing through~$\arcr_n$ is left, then the condition of~$C^1$-smooth concatenation between~$B$ and the function given by~\eqref{linearity} and~\eqref{difeq2} in~$\Lt(\arcr_n,*)$ is
\eq{\label{MultiTangentL}
\Mlt(\arcr_n) = f'(\arcr_n) + \av{f''}{[\arcr_1,\arcr_n]},
} 
the same as for the left trolleybus~\eqref{mltb0}. The same rule applies to the left boundary of~$\El$ (it should be treated as the boundary of the corresponding trolleybus). A linearity domain with equally oriented border tangents and at least three points on the lower boundary is called a \emph{multitrolleybus} (right or left); see Fig.~$3.12$ in~\cite{ISVZ2018} for visualization. If a linearity domain with more than two points on the lower boundary has left tangent on the left and right on the right, then it is called a \emph{multicup}; see Fig.~$3.13$ in~\cite{ISVZ2018}. In the remaining case, it is called a \emph{multibirdie}; see Fig.~$3.14$ in~\cite{ISVZ2018}.

Now we transfer these constructions to~$\Sigma$. Instead of writing down a formal description of a bilinearity domain~$\BEl$, we will describe its surroundings. The chordal domains~$\Ch([\arcr_i,\arcl_{i+1}],*)$,~$i=1,2,\ldots, n-1$, are transferred to vertical herringbones~$\SFir([\arcr_i,\arcl_{i+1}],*)$,~$i=1,2,\ldots, n-1$, see Subsection~\ref{s41}. As for the tangent domains that bound~$\El$ from the sides, we transfer them to the corresponding horizontal herringbones~$\SRt$ and~$\SLt$. For example, a multicup drawn on Fig.~$3.13$ in~\cite{ISVZ2018} is transferred to the domain drawn on Fig.~\ref{MCupFigure}. The values of the corresponding bilinear function are defined by~\eqref{InCorner} and~\eqref{coef2}.
\begin{figure}[h!]
\centering
\includegraphics[height=6cm]{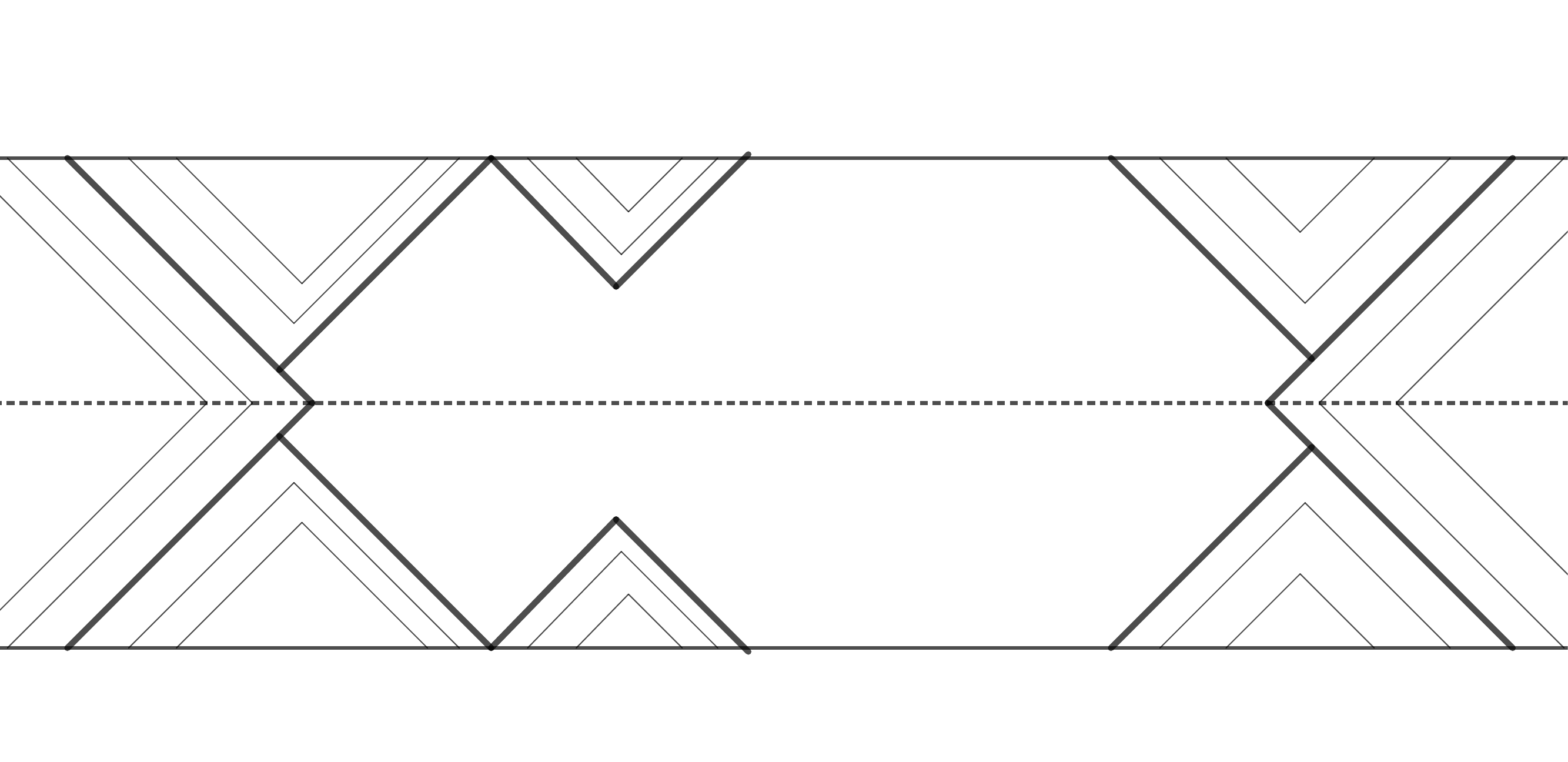}
\caption{A bilinearity domain that corresponds to a multicup}
\label{MCupFigure}
\end{figure}
One may verify that the conditions of~$C^1$-smooth concatentaion of a bilinear function in~$\BEl$ with the functions constructed by formulas~\eqref{difeq} and~\eqref{BFRT} or by~\eqref{difeq2} and~\eqref{BFLT}, are the same as~\eqref{MultiTangentR} or~\eqref{MultiTangentL} (this is completely similar to the transference of trolleybuses).

There are also multifigures that do not intersect the free boundary of~$\Omega$. We call them \emph{closed multicups}. A closed multicup is also defined by the collection of arcs~$\{\Arc_i\}_{i=1}^n$. However, since it does not intersect the free boundary, we have~$\arcr_n - \arcl_1 \leq 2$. This figure is surrounded by chordal domains from all sides. We transfer it to~$\Sigma$ in the same way as above: it matches a domain surrounded by vertical herringbones that correspond to those chordal domains. See Fig.~\ref{ClMCupFig} for visualization.

\begin{figure}[h!]
\includegraphics[height=4.5cm]{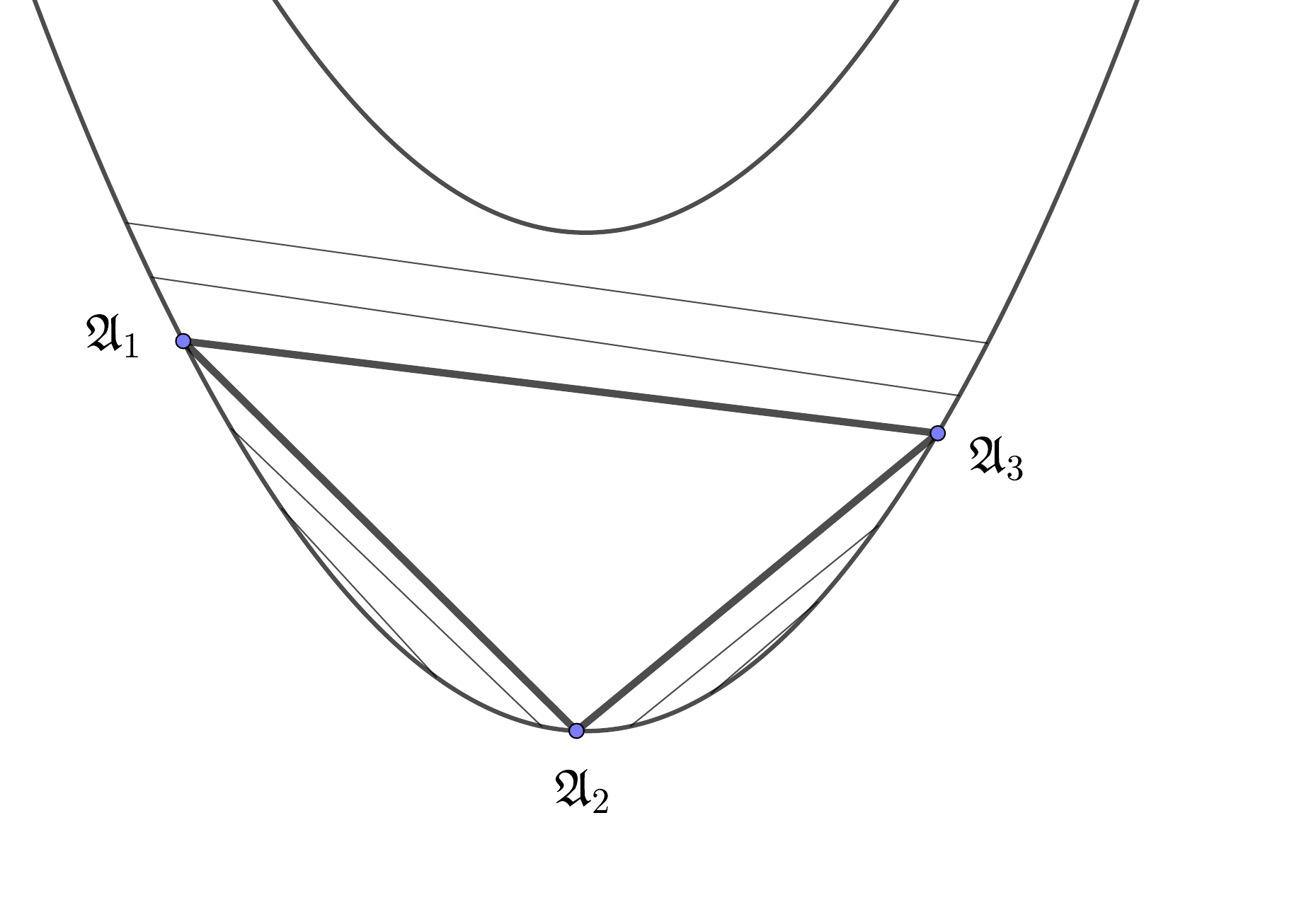}\qquad
\includegraphics[height= 5.2cm]{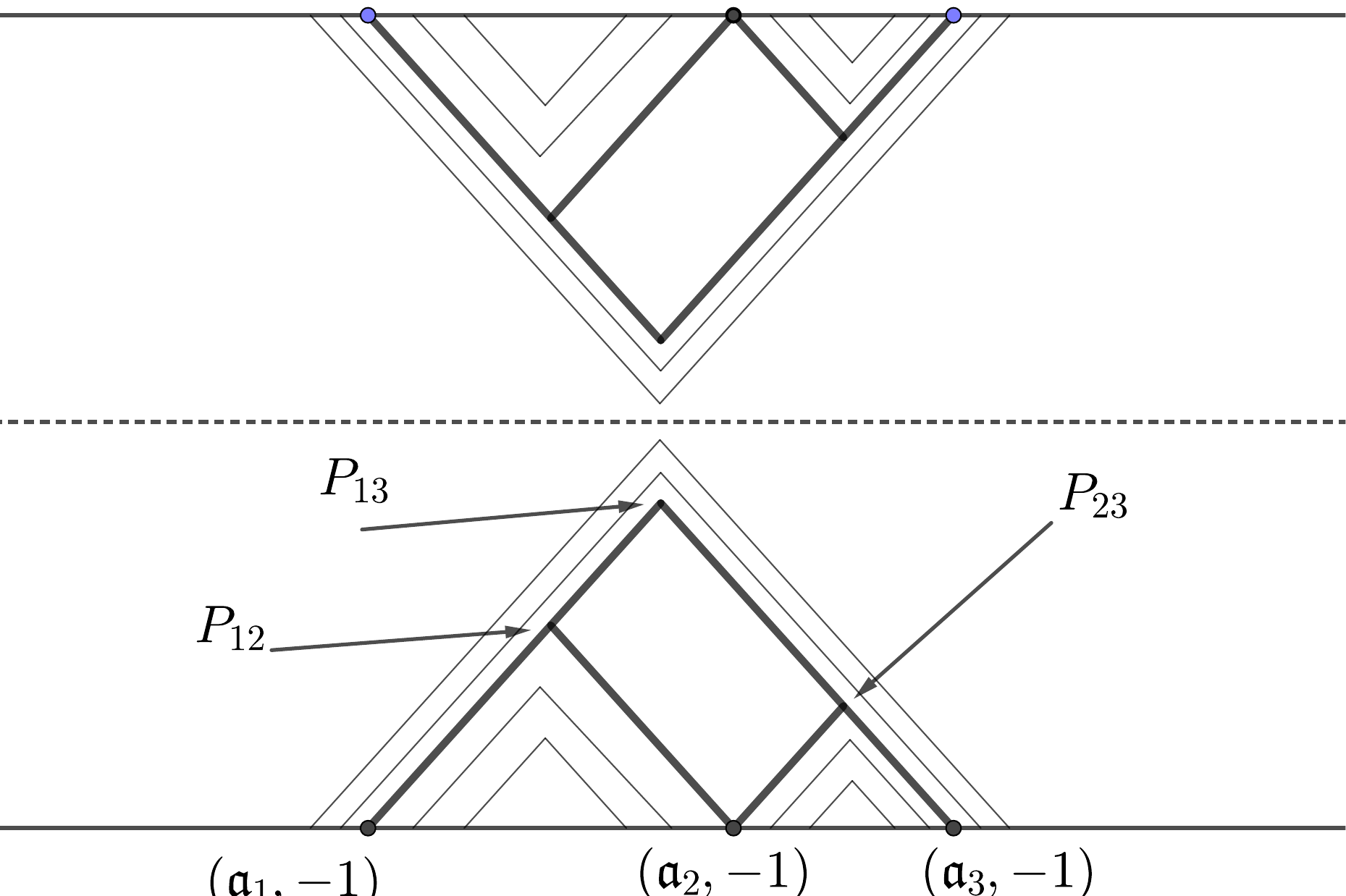}
\caption{A closed multicup and the corresponding bilinearity domain}
\label{ClMCupFig}
\end{figure}

Note that while it is considerably easy to write down the formulas for the function~$B$ in the multicup, the corresponding bilinear function~$V$ depends on its value at the {topmost point of the corresponding figure} (this is~$P_{13}$ on Fig.~\ref{ClMCupFig}). This value is computed via~\eqref{ValueTransmissionAlongVerticalFir}, and there is no short expression for it; it depends on the values of~$f$ outside~$\cup_i \Arc_i$. {In the example drawn on Fig.~\ref{ClMCupFig},} the bilinear function defines the values at the points~$P_{12}$ and~$P_{23}$. They serve as the initial data for computing the values of the function in~$\SFir([\arcr_1,\arcl_{2}],*)$ and~$\SFir([\arcr_2,\arcl_3],*)$ via~\eqref{ValueTransmissionAlongVerticalFir}. 

\begin{Rem}\label{RemarkMulti}
Since the coefficients~$\beta_0, \beta_1, \beta_2, \alpha_0, \alpha_1,$ and~$\alpha_{11}$ for multifigures are defined in the same way as in the case of a trolleybus\textup, the assertion of Theorem~\textup{\ref{TheoremForDiagonal}} that
\eq{
V(x_1,0) = B(x_1, x_1^2 + 1), \qquad (x_1,0)\in \BEl,
}
holds inside a multifigure as well.
\end{Rem}

\section{Conclusion}\label{S6}
\subsection{Combinatorial description of a general foliation}\label{s61}
Theorem~$4.4.15$ in~\cite{ISVZ2018} says that for any~$f$ satisfying Conditions~\ref{reg} and~\ref{sum} there exists a partition of~$\Omega$ into finitely many tangent domains, chordal domains, angles, trolleybuses, birdies, and multifigures such that each part falls under the scope of the corresponding proposition (e.g., chordal domains satisfy the assumptions of Proposition~\ref{LightChordalDomainCandidate}, right trolleybuses satisfy the assumptions of Proposition~\ref{RTrollProp}). This yields there exists a~$C^1$-smooth locally concave function  defined by the formulas presented in the corresponding propositions and solving the homogeneous Monge--Amp\`ere equation {almost everywhere}. It was proved in~\cite{ISVZ2018} that the constructed function coincides with the Bellman function~\eqref{BellmanFunctionInTheStrip}. The reasoning relied on optimizers. An optimizer for~$y\in \Omega$ is an optimal function~$\zeta$ for the extremal problem~\eqref{BellmanFunctionInTheStrip}. In particular, Theorem~$4.4.15$ in~\cite{ISVZ2018} delivers the desired formula for the Bellman function~\eqref{BellmanFunctionInTheStrip}. Of course, the description we have presented is far from being rigorous. We will not cite the heavy formalization from~\cite{ISVZ2018} in full and only provide several examples. The reader willing to know all the details about formalization is welcome to consult the original paper~\cite{ISVZ2018} (or a {generalization} in~\cite{ISVZ2023}).

The said paper describes a foliation with the help of a specific graph (see Subsection~$3.5.2$ in~\cite{ISVZ2018} for details). This graph has vertices that correspond to linearity domains (i.\,e., angles, trolleybuses, birdies, and multifigures) and edges that represent tangent domains and chordal domains. There are some other vertices that are needed for formalization (say, a single \emph{long} chord~$\big[(a,a^2),(b,b^2)\big]$ with~$b-a = 2$ deserves to be a vertex). A vertex of each type has numerical parameters. For linearity domains those are usually the arcs they intersect the lower boundary at. The exact list of numerical parameters may be found in the table at page~$78$ of~\cite{ISVZ2018}. A graph is called \emph{admissible} if the foliation constructed from it covers~$\Omega$ without intersections and the figure corresponding to each vertex and edge falls under the scope of the corresponding proposition. For example, all tangent domains must satisfy one of Propositions~\ref{321ISVZ},~\ref{322ISVZ},~\ref{RightTangentsCandidateInfty}, or~\ref{LeftTangentsCandidateInfty} while angles must fulfill the conditions of Proposition~\ref{344ISVZ}. The full and detailed description of the correspondence between the vertices and statements may be found in the table at page~$79$ of~\cite{ISVZ2018}.
By definition, an admissible graph allows to construct a locally concave on~$\Omega$ function with prescribed foliation satisfying the boundary conditions~\eqref{BCForBMO}. Theorem~$4.4.15$ in~\cite{ISVZ2018} claims an admissible graph exists as long as~$f$ satisfies Conditions~\ref{reg} and~\ref{sum}.

As we have seen, each linearity domain in~$\Omega$ corresponds to a bilinearity domain in~$\Sigma$, chordal domains correspond to vertical herringbones, and tangent domains correspond to horizontal herringbones~$\SRt$ or~$\SLt$. The conditions that allow the construction of a~$C^1$\!-smooth diagonally concave solution to~\eqref{StrangeEquation} in a neighborhood of each figure in~$\Sigma$ are identical to the conditions for the construction of a~$C^1$\!-smooth locally concave solution to the homogeneous Monge--Amp\`ere equation in the corresponding figure in~$\Omega$. Due to Theorem~$4.4.15$ in~\cite{ISVZ2018}, this guarantees the existence of the global diagonally concave~$C^1$\!-smooth solution to~\eqref{StrangeEquation} that falls under the scope of Theorem~\ref{MishasTh}. Therefore, in the case where~$f$ satisfies Conditions~\ref{reg} and~\ref{sum}, we have indeed constructed the minimal Bellman function~$\VB$ and our considerations provide formulas for it. In particular, we have proved Theorem~\ref{TheoremForDiagonal} for functions~$f$ that satisfy the said conditions; see Remarks~\ref{RemarkTangents},~\ref{RemarkAngle},~\ref{RemarkChords},~\ref{RemarkTrolleybus}, and~\ref{RemarkMulti}. In the forthcoming subsection, we provide an approximation argument that finishes the proof of Theorem~\ref{TheoremForDiagonal}.

 \subsection{Approximation argument}\label{s62}
Let~$f$ be a bounded from below continuous function. Without loss of generality, assume~$f$ attains positive values only. Also fix~$x_1 \in \R$. We wish to prove the identity~$\VB[f](x_1,0) = \B[f](x_1,x_1^2+1)$. By Corollary~\ref{InequalityOnDiagonal},~$\VB[f](x_1,0) \leq \B[f](x_1,x_1^2+1)$. To prove the reverse inequality, assume for a while that~$\B[f](x_1,x_1^2+1)$ is finite and pick a small number~$\eps$. By the very definition, there exists a function~$\zeta\colon [0,1]\to \R$ such that
 \eq{
\B[f](x_1,x_1^2 + 1) \leq \av{f(\zeta)}{[0,1]} + \eps,\quad \av{\zeta}{[0,1]} = x_1,\ \av{\zeta^2}{[0,1]} = x_1^2 + 1,\ \text{and}\ \|\zeta\|_{\BMO([0,1])}\leq 1.
 }
 We wish to construct a function~$\tilde{f}$ such that it satisfies Conditions~\ref{reg} and~\ref{sum}, does not exceed~$f$, and
 \eq{\label{TildefRequirement}
 \av{\tilde{f}(\zeta)}{[0,1]} \geq \av{f(\zeta)}{[0,1]} - 10\eps.
 }
 Since~$\eps$ is an arbitrary number, this will prove the desired reverse inequality:
 \mlt{
 \B[f](x_1,x_1^2 + 1) \leq \av{f(\zeta)}{[0,1]} + \eps \leq \av{\tilde{f}(\zeta)}{[0,1]} + 11\eps\\ \leq \B[\tilde{f}](x_1,x_1^2 + 1) + 11\eps = \VB[\tilde{f}](x_1,0)+ 11\eps \leq \VB[f](x_1,0)+ 11\eps.
 }
 
We will construct~$\tilde{f}$ in several steps. First, since~$f$ is non-negative and~$\B[f](x_1,x_1^2+1)$ is finite, the integral~$\int_0^1f(\zeta(t))\,dt$ converges. Therefore, there exists a large number~$N$ such that
 \eq{
 \int\limits_0^1 f(\zeta) \leq \int\limits_0^1 \min(f(\zeta),N) + \eps.
 }
Let~$f_1 = \min(f,N)$, this function is uniformly bounded and continuous. Since~$\zeta$ is a summable function, the set where~$|\zeta| > M$ has small measure when~$M$ is sufficiently large. Therefore,
\eq{
 \int\limits_0^1 f_1(\zeta) \leq \int\limits_0^1 f_2(\zeta) + \eps, \quad \text{where}\quad f_2(t) =  \min\big(f_1(t),h(|t|)\big),
}
where~$h\colon [0,\infty) \to \R_+$ is an auxiliary {smooth} function that equals~$N+1$ on~$[0,M]$ and zero on the ray~$[M+1,\infty)$.

The function~$f_2$ is bounded, continuous, and zero outside~$[-M-1,M+1]$. We approximate~$f_2$ on~$[-M-1,M+1]$ by a polynomial~$p$ such that
{
\alg{
&p(-M-1) =  p'(-M-1) = p''(-M-1) = p'''(-M-1) = 0,\\
& p(M+1)= p'(M+1) = p''(M+1) = p'''(M+1) = 0, \\
&|p(t) - f_2(t)| \leq \eps, \ t\in [-M-1,M+1].
}
}
We define
\eq{
f_3(t) = \begin{cases}
0,\quad &t\notin [-M-1,M+1];\\
p(t),\quad &t \in [-M-1,M+1].
\end{cases}
}
The function~$f_3$ fulfills Conditions~\ref{reg} and~\ref{sum}. It remains to set~$\tilde{f}(t) = f_3(t) - \eps$; this function clearly satisfies~\eqref{TildefRequirement} and does not exceed~$f$. 

The case where~$\B[f](x_1,x_1^2+1)$ is infinite is similar; in this case we construct~$\tilde{f}$ such that~$\av{\tilde{f}(\zeta)}{[0,1]}$ is arbitrarily large instead of~\eqref{TildefRequirement}.

\section{Supplementary material}\label{S7}
\subsection{Automatic local concavity}\label{s71}
We will now provide a sketch of an alternative proof of Corollary~\ref{InequalityOnDiagonal}. Recall the set~$\Lambda[f,\eps]$ given in~\eqref{SetLambda} and consider
\eq{
\tilde{\Lambda}[f,\eps] = \Set{G \colon \Omega_\eps\to \R}{G\ \text{is concave along tangents to~$\FreeBoundary \Omega_\eps$, }\ G({y_1,y_1^2}) = f(y_1),\ y_1\in \R}.
}
It is clear that if~$G \in\Lambda$, then~$G \in \tilde{\Lambda}$, but not necessarily vice versa. Define the corresponding minimal function
\eq{
\tilde{\BG}_\eps(y) = \inf\set{G(y)}{G \in \tilde{\Lambda}[f,\eps]},\qquad y\in \Omega_\eps.
}
From the above,~$\tilde{\BG}_\eps\leq \BG_\eps$. Consider the mapping~$T$
\eq{\label{Correspondence}
T(x_1,x_2) = (x_1, x_1^2+1-x_2^2),\qquad (x_1,x_2)\in \Sigma
}
and its right inverse mappings
\eq{
(y_1,y_2) \mapsto \big(y_1,\pm\sqrt{y_1^2 + 1 -y_2}\big),\qquad (y_1,y_2)\in \Omega.
}
The mapping~$T$ maps diagonal segments in~$\Sigma$ (i.\,e., segments with slopes~$\pm 1$) into tangents to~$\FreeBoundary\Omega$; the latter assertion follows from comparing formulas~\eqref{urFormula} and~\eqref{ulFormula} with the expression for~$\vr$ and~$\vl$ given in~\eqref{BFRT} and~\eqref{BFLT}. What is more, the restriction of~$T$ to any of the diagonal segments is an affine mapping.

\begin{Th}
For any function~$f$ and any~$x\in \Sigma$\textup, we have
\eq{
\VB(x) = \tilde{\BG}_1(T(x)).
}
\end{Th}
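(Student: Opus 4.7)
The plan hinges on the geometry of the map $T$. A direct parametric computation shows that $T$ sends a right diagonal $\{(x_1 + t, x_2 + t)\}$ in $\Sigma$ (direction $(1,1)$) with parameter $c = x_1 - x_2$ affinely onto the tangent line to $\FreeBoundary\Omega$ at $y_1 = c$, and the portion of the diagonal lying in $\Sigma$ covers the entire tangent segment inside $\Omega$---from the fixed-boundary point $(c-1,(c-1)^2)$ through the tangency $(c,c^2+1)$ to the fixed-boundary point $(c+1,(c+1)^2)$. Left diagonals with the same $c = x_1 + x_2$ map onto the same tangent, and the two preimage diagonals are interchanged by the fiber involution $R(x_1,x_2) = (x_1,-x_2)$. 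A first observation I will need is that $\VG$ (equal to $\VB$ by Theorem~\ref{BurkholderV}) is $R$-invariant: since $R$ preserves $\Sigma$, swaps the two lines $\{x_2 = \pm 1\}$ on which the same constraint $G \geq f$ is imposed, and interchanges the diagonal directions $(1,1)$ and $(1,-1)$, the class $\Diagonal[f]$ is closed under pre-composition with $R$; minimality then yields $\VG = \VG \circ R$. Moreover $\VG(x_1,\pm 1) = f(x_1)$ with equality, because in the martingale problem $\varphi_0 = \pm 1$ together with $|\varphi_\infty| = 1$ a.\,s.\ forces $\varphi$ to be constant and hence $\psi$ too.

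Next I will verify that the pull-back $\Phi \colon G \mapsto G \circ T$ is a bijection between $\tilde\Lambda[f,1]$ and the subclass of $\Diagonal[f]$ consisting of $R$-invariant functions that satisfy the boundary condition $G(x_1,\pm 1) = f(x_1)$ with equality. The boundary equality transfers because $T(x_1,\pm 1) = (x_1,x_1^2)$; $R$-invariance of $\Phi(G)$ is automatic since $G \circ T$ depends on $x_2$ only through $x_2^2$; and concavity along either preimage diagonal in $\Sigma$ is equivalent, via the affine parametrization supplied by $T$ restricted to that diagonal, to concavity of $G$ along the corresponding entire tangent segment in $\Omega$. The inverse of $\Phi$ simply descends an $R$-symmetric function through the two-to-one quotient $T$.

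Combining, for any $G \in \tilde\Lambda[f,1]$, $\Phi(G) \in \Diagonal[f]$ gives $\VB(x) = \VG(x) \leq G(T(x))$, and passing to the infimum over $G$ yields $\VB(x) \leq \tilde\BG_1(T(x))$. Conversely, by the first observation, $\VG$ itself lies in the image of $\Phi$, so $\Phi^{-1}(\VG) \in \tilde\Lambda[f,1]$ and $\tilde\BG_1(T(x)) \leq \Phi^{-1}(\VG)(T(x)) = \VB(x)$. The delicate point---the linchpin of the whole reduction---is the geometric fact that a full diagonal in $\Sigma$ maps exactly onto a full tangent segment in $\Omega$, joining both halves at the tangency. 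Otherwise, ``concavity along tangents'' in $\tilde\Lambda$ would have to be read as concavity on each half separately (as in Subsection~\ref{s31}), and the clean equivalence with diagonal concavity would break. This is nonetheless a short endpoint computation.
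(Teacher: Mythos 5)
Your proof is correct and follows the same essential strategy as the paper: exploit the map $T$ and the fact that it carries each diagonal segment in $\Sigma$ affinely onto a full tangent segment in $\Omega$, so that concavity along tangents pulls back to diagonal concavity. The paper's own proof consists of a single sentence asserting that $G\in\tilde\Lambda[f,1]$ if and only if $G\circ T\in\Diagonal[f]$, and then invokes the affinity of $T$ on diagonals. You make two points explicit that the paper leaves to the reader, and they are genuinely needed to close the argument: first, that $\VG$ is invariant under the reflection $R(x_1,x_2)=(x_1,-x_2)$, so that the minimal function actually lies in the image of the pull-back $\Phi\colon G\mapsto G\circ T$; second, that $\VG(x_1,\pm 1)=f(x_1)$ with \emph{equality}, which is needed because $\Diagonal[f]$ imposes only the inequality $G(x_1,\pm 1)\ge f(x_1)$ while $\tilde\Lambda[f,1]$ imposes equality on the fixed boundary---so the literal ``if and only if'' in the paper's proof is false in the backward direction for a general $H\in\Diagonal[f]$ that is not equal to $f$ at $x_2=\pm1$. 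Your boundary-equality observation repairs this (one could also derive it without martingales, by lowering the boundary values of any competitor in $\Diagonal[f]$ to $f$, which preserves diagonal concavity and, by minimality, forces $\VG$ to agree with $f$ there). Your endpoint computation confirming that a full diagonal from $(c-1,-1)$ to $(c+1,1)$ maps onto the entire tangent segment $(c-1,(c-1)^2)\to(c,c^2+1)\to(c+1,(c+1)^2)$ is also the right thing to pin down, since it is exactly what distinguishes $\tilde\Lambda$ (concavity along the whole tangent) from the weaker half-tangent concavities used in Section~\ref{s31}.
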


\begin{proof}
It suffices to verify that a function~$G\colon \Omega \to \R$ belongs to~$\tilde{\Lambda}[f,1]$ if and only if the composition
\eq{
G\circ T\colon x \mapsto G(x_1, x_1^2 + 1 - x_2^2)
} 
belongs to~$\Diagonal[f]$ (see~\eqref{DiagonalClass}). In fact, this is a direct consequence of the properties of~$T$: this map takes diagonal segments to tangents, and the restriction of~$T$ to any diagonal segment is affine. It remains to use that the concavity is preserved by affine maps.
\end{proof}
Note that together with the inequality~$\tilde{\BG}_1\leq \BG_1$, the theorem above gives another proof of Corollary~\ref{InequalityOnDiagonal}. Theorem~\ref{TheoremForDiagonal} says~$\tilde{\BG}_1(y_1,y_1^2 + 1) = \BG_1(y_1,y_1^2+1)$. We believe there exists a simpler proof of this fact that does not {rely on consideration of the} types of foliations.

Consider the class of \emph{subtangential} segments
\eq{
\NT_\eps = \Set{\ell \subset \Omega_\eps}{\ell \ \text{is a segment such that the line containing it does not intersect}\ \FreeBoundary\Omega_\eps}
}
and the functions that are concave along these segments
\eq{\label{Non-tangD}
\Lambda^*[f,\eps] = \Set{G\colon \Omega_\eps\to \R}{\forall \ell \in \NT_\eps \quad G|_\ell \ \text{is concave and}\ G({y_1,y_1^2}) = f(y_1),\ y_1 \in \R}.
}
We define the third minimal function
\eq{\label{Non-tangF}
\BG^*_\eps[f,\eps](y) =\inf\Set{G(y)}{G\in \Lambda^*[f,\eps]},\qquad y\in \Omega_\eps. 
}
Clearly,~$\tilde{\BG}_\eps \leq \BG^*_\eps \leq \BG_\eps$, and on the free and fixed boundaries, these three functions coincide. 

\begin{Fact}\label{Auto}
For any continuous uniformly bounded from below function~$f$\textup, we have~$\BG_\eps = \BG^*_\eps$.
\end{Fact}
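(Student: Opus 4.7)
The easy direction $\BG^*_\eps \leq \BG_\eps$ is immediate from $\Lambda[f,\eps] \subseteq \Lambda^*[f,\eps]$: any locally concave function is, in particular, concave along every subtangential segment. For the reverse, my plan is to show that $\BG^*_\eps$ is itself locally concave; the minimality of $\BG_\eps$ in $\Lambda[f,\eps]$ then forces $\BG_\eps \le \BG^*_\eps$.

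The central step is a Carath\'eodory-type assertion for $\Lambda^*$ analogous to Lemma~\textup{1.4.1}: at every interior point $y$ of $\Omega_\eps$ there is an open subtangential segment $\ell(y)\subset\Omega_\eps$ with $y\in\ell(y)$ along which $\BG^*_\eps$ is affine (or $\BG^*_\eps$ is already affine on a neighborhood of $y$). I would prove this by contradiction: if $\BG^*_\eps$ were strictly concave along every subtangential direction at $y$, one could subtract off a small bump supported near $y$ and produce a strictly smaller function still in $\Lambda^*[f,\eps]$, contradicting minimality. The admissible direction cone at $y$ is the open arc of slopes $k$ with $|k - 2y_1| < 2\sqrt{\eps^2 - (y_2 - y_1^2)}$, which parametrizes all subtangential lines through $y$; the openness of this cone powers the reduction.

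Granting the Carath\'eodory step, at every point $y$ of twice differentiability of $\BG^*_\eps$ the Hessian $H$ satisfies $v_0^{\mathrm T}H v_0 = 0$ where $v_0$ is the direction of $\ell(y)$, while $v^{\mathrm T}H v \le 0$ on the open arc of subtangential directions around $v_0$. Hence $v_0$ is a local, and therefore global, maximum on the unit circle of the form $v \mapsto v^{\mathrm T}H v/\|v\|^2$ with value zero, so the top eigenvalue of $H$ vanishes and $H$ is negative semidefinite; $\BG^*_\eps$ is thus concave in every direction at $y$. To promote this pointwise concavity to global local concavity I would use upper semicontinuity of $\BG^*_\eps$ together with an Alexandrov-type second-order differentiability result (subtangential concavity on an open cone of directions yields local Lipschitz regularity by the usual supporting-line construction).

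The principal obstacle is the Carath\'eodory-type first step. The classical proof for $\BG_\eps$ uses the convex hull of the hypograph in $\R^3$, whereas here the permissible local perturbations live in a state-dependent cone of subtangential directions. The subtlety is that an affine-along-subtangential bump subtracted near $y$ must not destroy subtangential concavity at neighboring points $y'$, whose admissible cones differ from that at $y$; ensuring the perturbation respects these shifting cones requires some care in its construction.
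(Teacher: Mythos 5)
Your proposal takes a genuinely different route from the paper's. The paper's own argument, which it only sketches, mirrors the proof of Theorem~\ref{TheoremForDiagonal}: one takes the explicit foliation of~$\BG_\eps$ constructed in~\cite{ISVZ2018} (chordal domains, tangent domains, angles, trolleybuses, multifigures) and checks piece by piece that the extremal structure uses only subtangential segments (or their limits), from which minimality of $\BG_\eps$ in the larger class $\Lambda^*[f,\eps]$ follows. The authors explicitly say they ``believe there must be a simpler geometric proof that avoids considering many cases of various foliations'' but do not have one; you are attempting precisely such a proof, and if it could be completed it would improve on the paper.

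The Carath\'eodory step on which your plan rests is, however, a genuine gap --- it is the real content of the statement, not a routine lemma --- and you rightly identify it as the principal obstacle. The difficulty is structural. In the classical argument for $\BG_\eps$, strict concavity at an interior point $y$ in \emph{all} directions produces a paraboloid $Q$ dominating $\BG_\eps$ near $y$ and strictly separating it on a small sphere, and $\min(\BG_\eps,Q-\delta)$ is then a strictly smaller locally concave competitor. For $\BG^*_\eps$ this fails at both ends: concavity is known only along the open subtangential cone at each point, so in transverse directions $\BG^*_\eps$ carries no one-sided second-order control and no paraboloid need dominate it near $y$; and on the edge of the cone, where concavity holds but possibly not strictly, there is no uniform margin to absorb a bump's curvature (one cannot invoke compactness on an open arc of directions). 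The variant with a compactly supported bump $b$ and $\BG^*_\eps - \delta b$ meets the same issue, compounded by the one you notice: subtangential segments through nearby $y'$ with slopes admissible at $y'$ but not at $y$ cross the support of~$b$, and along those directions neither the sign of the bump's second derivative nor the size of $\BG^*_\eps$'s concavity is controlled. Openness of the cone does not supply the uniform separation required; it only guarantees (and this part of your argument is correct) that the affine segment, once its existence is established, sits in the interior of the cone so that your Rayleigh-quotient computation forces $H\le 0$ at twice-differentiability points.

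There is also a secondary gap in the promotion step. An a.e.\ nonpositive (Alexandrov-type) Hessian of a merely Lipschitz function does not yield concavity: $t\mapsto|t|$ has $H=0\le 0$ almost everywhere yet is convex. Alexandrov's theorem is a \emph{consequence} of convexity or separate convexity, not a replacement for it, and the position-dependent cone here degenerates near $\FreeBoundary\Omega_\eps$, so even the local Lipschitz claim needs to be established rather than cited. One must still argue that subtangential concavity plus the a.e.\ Hessian information actually integrate up to concavity along every chord of $\Omega_\eps$; the explicit foliation is precisely what supplies this in the paper's line of proof.
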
 

The proof of this fact is similar to that of Theorem~\ref{TheoremForDiagonal}: we study the foliation of~$\BG_\eps$ described in~\cite{ISVZ2018} and see that the extremal segments are subtangential. We omit the details, because we believe there must be a simpler geometric proof that avoids considering many cases of various foliations. Note that Fact~\ref{Auto} is a statement about automatic concavity/convexity: a somehow weakly concave function is proved to fulfill a stronger convexity condition under additional assumptions. The most famous statement of this type is the Morrey conjecture (see the recent preprint~\cite{AFGKK2023} for the survey on the Morrey conjecture and related topics). See~\cite{KirchheimKristensen2016} for applications of a similar phenomenon for separately convex functions to variational problems and inequalities for differential operators. 

A passage to domains more general than~$\Omega_\eps$ may help to find a more geometric proof of Fact~\ref{Auto} and, maybe, Theorem~\ref{TheoremForDiagonal}. Using theory from~\cite{ISVZ2023}, one may generalize Fact~\ref{Auto} to planar domains that are set-theoretical differences of two unbounded convex sets (the precise description of the class of available sets is long and more restrictive). A ring is the simplest set that is not covered by the mentioned theory:
\eq{
\set{y \in \R^2}{1 - \delta \leq y_1^2 + y_2^2 \leq 1},\qquad\text{where}\quad \delta \in (0,1)
} 
is an arbitrary number. This set is related to~$S^1$-valued~$\BMO$ the same way as~$\Omega_\eps$ is related to the classical~$\BMO$, see~\cite{SZ2022}. In this case,~$S^1 = \set{y\in \R^2}{y_1^2 + y_2^2 = 1}$ is the fixed boundary of the ring; the smaller circle is the free boundary. For any function~$f\colon S^1 \to \R$ we may consider the minimal locally concave function~$\BG^\circ_\delta[f]$ defined on the ring. We may also consider the subtangentially locally concave function~$\BG^{\circ,*}_\delta[f]$ defined similar to~\eqref{Non-tangD} and~\eqref{Non-tangF}.
\begin{Conj}\label{Conjecture713}
Let~$f\colon S^1 \to \R$ be a continuous function. Then~$\BG^\circ_\delta[f] = \BG^{\circ,*}_\delta[f]$ for any~$\delta > 0$.
\end{Conj}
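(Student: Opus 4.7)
The easy direction is automatic: a locally concave function on the ring is, in particular, concave along every segment lying in the ring, so $\Lambda^\circ[f,\delta]\subset\Lambda^{\circ,*}[f,\delta]$ and therefore $\BG^{\circ,*}_\delta[f]\leq\BG^\circ_\delta[f]$. The content of the conjecture is the reverse inequality, which amounts to showing that $\BG^\circ_\delta$ itself lies in $\Lambda^{\circ,*}$ and already solves the minimization problem defining $\BG^{\circ,*}_\delta$.

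My plan is to transplant the strategy behind Fact~\ref{Auto} from the parabolic strip to the ring. The first step would be to develop a combinatorial foliation theory for the ring in parallel with~\cite{ISVZ2018}: decompose the ring into tangent domains foliated by lines tangent to the inner circle, chordal domains foliated by chords of $S^1$, and a finite family of linearity domains modeled on the angles, trolleybuses, birdies, and multifigures of Sections~\ref{S3}--\ref{S5}. On each figure one defines a candidate function $F$ by formulas of the same type as~\eqref{vallun},~\eqref{linearity},~\eqref{InAngle}, and~\eqref{InTrolley}, while $C^1$-smooth concatenation is forced by circular analogues of the balance equation~\eqref{BalanceEquation}, the cup equation~\eqref{urlun}, and the gluing identities~\eqref{ChordsTangentsGluing},~\eqref{mrta0},~\eqref{mrtb0}. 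The resulting $F$ is then $C^1$-smooth and locally concave on the ring, with $F\geq\BG^\circ_\delta$ by concavity and $F\leq\BG^\circ_\delta$ via the standard foliation comparison argument, yielding $F=\BG^\circ_\delta$.

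The second step is a figure-by-figure check that every extremal segment of this foliation is subtangential. Tangent segments are parts of lines tangent to the inner circle, which touch the free boundary at a single point and do not enter the open inner disk. Chords of $S^1$ appearing inside chordal or multifigure domains are contained in the ring by construction, which forces their containing lines to avoid the open inner disk; otherwise part of such a chord would have to leave the ring. Granted this, the comparison argument that established $F=\BG^\circ_\delta$ runs verbatim with $\Lambda^{\circ,*}$ in place of $\Lambda^\circ$: along each extremal segment any $G\in\Lambda^{\circ,*}$ is concave, agrees with $f$ at the two endpoints on $S^1$, and hence dominates the affine interpolant of those boundary values, which coincides with $F$ on the segment. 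Taking the infimum over such $G$ gives $F\leq\BG^{\circ,*}_\delta$, and combining with $F=\BG^\circ_\delta$ closes the inequality.

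The principal obstacle is the first step. The domains treated in~\cite{ISVZ2023} exclude the ring because its fixed boundary is a single closed curve without `ends', and the linear-order structure that organizes admissible graphs there breaks down in the circular setting. A cleaner and more satisfying route would be a direct geometric proof that avoids case analysis: show that any continuous $G\in\Lambda^{\circ,*}[f,\delta]$ majorizes the pointwise supremum of affine minorants supported on subtangential chords of $S^1$, and that this supremum is automatically locally concave. The delicate point is topological, since the ring is not simply connected; segments whose containing lines cross the inner disk cannot be freely used as test segments, and the subtangentiality hypothesis is precisely what prevents concavity from `leaking' across the hole. An argument of this form would presumably also yield a transparent new proof of Fact~\ref{Auto} and extend to the broader class of planar differences of two convex sets mentioned immediately before the conjecture.
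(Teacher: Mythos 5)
This statement is a \emph{conjecture}: the paper does not prove it, and in fact explicitly presents the ring as ``the simplest set that is not covered by'' the foliation theory of~\cite{ISVZ2023}, which is the machinery underlying Fact~\ref{Auto}. So there is no ``paper's own proof'' to compare against, and your proposal needs to be judged on its own as an attempt to close an open problem.

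Read as such, your proposal does not contain a proof. Your first step --- develop a combinatorial foliation theory for the ring, analogous to Sections~\ref{S3}--\ref{S5} and~\cite{ISVZ2018}, and show the resulting candidate equals $\BG^\circ_\delta$ --- is exactly the missing ingredient that makes the statement a conjecture, and you say so yourself in your closing paragraph (``The principal obstacle is the first step\ldots the linear-order structure that organizes admissible graphs there breaks down in the circular setting''). That is precisely the authors' point in the surrounding text: $\Omega_\eps$ and the planar set-differences handled by~\cite{ISVZ2023} carry a total order on the fixed boundary, which the machinery of admissible graphs, cup equations, tangent cascades, etc.\ leans on heavily; on $S^1$ there is no such order, and the global gluing argument (Theorem~$4.4.15$ in~\cite{ISVZ2018}) has no analogue. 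Until that foliation theory exists for the ring, your step-two verification that every extremal segment is subtangential has nothing to apply to, and the comparison argument never gets off the ground. Your last paragraph, proposing a ``direct geometric proof that avoids case analysis,'' is closer to what the authors themselves hope for (they say the same about Fact~\ref{Auto}: ``we believe there must be a simpler geometric proof''), but it remains a wish, not an argument: you do not supply the mechanism by which subtangential concavity would be shown to yield full local concavity without first knowing the extremal structure, and it is exactly the non-simple connectedness of the ring that makes such an argument delicate.

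In short: your easy-direction observation ($\BG^{\circ,*}_\delta\le\BG^\circ_\delta$) is correct and matches the paper's remark, but the hard direction is open, your plan for it has a gap that you yourself identify, and that gap is the content of the conjecture. No amount of figure-by-figure checking in step two can repair the absence of step one.
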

\subsection{On smoothness}\label{s72}
The minimal functions we study are concave  {(almost everywhere)} solutions to the homogeneous Monge--Amp\`ere equation~\eqref{MongeAmpere} and to~\eqref{StrangeEquation}. Though for our applications, the formulas giving analytic expressions for solutions are of primary importance, it is also interesting to tackle these problems from a PDE point of view and study the smoothness properties of solutions. Seemingly, the phenomenon of free boundary we discuss in this paper has not appeared in PDE literature. However,  see~\cite{Krylov1989} for a very general setting, where the boundary data is defined on a part of the boundary; seemingly, the notion of `free boundary' of the latter paper is different from ours. What is more, seemingly, the equations we study do not fulfill the non-degeneracy conditions of the latter paper. The homogeneous Monge--Amp\`ere equation~\eqref{MongeAmpere} on a strictly convex bounded domain in~$\R^d$ was studied in~\cite{CNS1986}. It was proved that the solution is of class~$C^{1,1}$, provided the boundary data and the boundary itself are~$C^{3,1}$-smooth. Recall the notation~$C^{k,\alpha}$,~$k\in \mathbb{N}\cup \{0\}$ and~$\alpha \in (0,1]$ means the function is~$k$ times differentiable and its~$k$-th differential is~$\alpha$-H\"older continuous. The same paper also provides an example showing the sharpness of this result: there exists a smooth function on the boundary of the unit circle, for which the solution is not of class~$C^2$. {In~\cite{Guan1998}, similar results are obtained on non-convex domains with smooth boundary; the boundary values are fixed on the whole boundary (there is no free part of the boundary as well).} The paper~\cite{LW2015} considers the homogeneous Monge--Amp\`ere boundary value problem on a strip~$\set{x\in \R^d}{0\leq x_d \leq 1}$ and provides the if and only if conditions on concave boundary data for the solvability. It is also proved that in the situation of convex boundary data, the regularity self-improves. Roughly speaking, the solution is of the same regularity class as the boundary values. In some cases, a similar effect is present in the case of bounded domain considered in~\cite{CNS1986}, see the last chapter of~\cite{LW2015}.

Motivated by the above, we may state two conjectures. Maybe, the assumptions on~$f$ are too strong.
\begin{Conj}\label{Smoothness1}
Let~$f$ be a smooth uniformly bounded function. Then\textup, $\B_\eps$ is of class~$C^{1,1}$.
\end{Conj}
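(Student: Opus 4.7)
The plan is to exploit the combinatorial description of the foliation of $\Omega_\eps$ from Theorem~$4.4.15$ in~\cite{ISVZ2018} together with the explicit formulas recalled in Sections~\ref{S3}--\ref{S5}. Since $f$ is smooth and uniformly bounded, Conditions~\ref{reg} and~\ref{sum} hold, so $\Omega_\eps$ splits into finitely many tangent domains, chordal domains, angles, trolleybuses, birdies, and multifigures, and on each piece $\B_\eps$ coincides with the explicit expression from the corresponding proposition. The $C^1$-smoothness across interfaces is already built into the existing theory; what remains is an $L^\infty$-bound on the second derivatives. Once the Hessian computed on each piece is shown to stay bounded up to the closure of that piece, the distributional Hessian of $\B_\eps$ lies in $L^\infty_{\mathrm{loc}}$, and the continuity of $\nabla\B_\eps$ promotes this to $\B_\eps\in C^{1,1}_{\mathrm{loc}}$.

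First I would establish interior $C^\infty$-smoothness on each piece. In linearity domains this is trivial. In a tangent domain the function $\Mrt$ solves the linear ODE~\eqref{difeq} with smooth right-hand side, hence is $C^\infty$; composed with the smooth function $\ur$ in~\eqref{linearity} it gives $\B_\eps\in C^\infty$ there. In a chordal domain, the pair $(a(\ell),b(\ell))$ is smooth by the implicit function theorem applied to the cup equation~\eqref{urlun}, since the relevant Jacobian is governed by the nonzero differentials $\Dl,\Dr$; feeding this into~\eqref{vallun} yields $C^\infty$-smoothness inside. Multifigures reduce to the same analysis through~\eqref{InTrolley}--\eqref{coef}.

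Next I would quantify the second derivatives at the interfaces between adjacent pieces. Along a shared tangent or chord, $\B_\eps$ is affine on both sides, and the $C^1$-matching in Proposition~\ref{344ISVZ}, Proposition~\ref{RTrollProp}, and their analogues guarantees coincidence of the gradients; a Taylor expansion to second order of the piecewise formulas, combined with these compatibility relations, should show that the one-sided Hessians from each side remain bounded as one approaches the interface. Uniform control of $\Mrt,\Mlt$ through the representation~\eqref{ExplicitFormulaForm}, and of $a,b$ through~\eqref{urlun}, promotes the local Lipschitz bound on $\nabla\B_\eps$ to a global one under the assumption that $f$ has bounded derivatives of all orders, which is the natural reading of ``smooth uniformly bounded''.

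The main obstacle will be the behavior near the fixed parabolic boundary $y_2=y_1^2$. There $\ur(y)\to y_1$ and $\partial_{y_2}\ur=\tfrac12(y_1^2+1-y_2)^{-1/2}$ blows up, so the boundedness of $\partial^2_{y_2}\B_\eps$ requires a cancellation against the vanishing factor $(y_1-\ur)$ in~\eqref{linearity}. A careful expansion should reveal the limiting value $\tfrac12 f''(y_1)$, but rendering this rigorous and uniform is the substantive computation; a parallel delicacy is present for the chordal and multifigure formulas near the points $(\arcr_i,(\arcr_i)^2)$. If this formula-based route proves too cumbersome, an alternative is to reinterpret $\B_\eps$ as the solution of a mixed boundary value problem for the homogeneous Monge--Amp\`ere equation on $\Omega_\eps$ and attempt to extend the regularity theory of~\cite{CNS1986} or~\cite{LW2015} to accommodate the free boundary at $y_2=y_1^2+\eps^2$ and the unboundedness of $\Omega_\eps$.
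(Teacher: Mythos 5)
The statement you are trying to prove is labeled as a \emph{Conjecture} in the paper; the authors explicitly leave it open, remarking ``maybe, the assumptions on~$f$ are too strong.'' There is therefore no proof in the paper to compare against, and your attempt must be judged on its own terms. Unfortunately it contains several genuine gaps.

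The foundational claim that smoothness and uniform boundedness of~$f$ imply Conditions~\ref{reg} and~\ref{sum} is false. Condition~\ref{reg} requires~$f''$ to be piecewise monotone with finitely many monotonicity intervals; a bounded~$C^\infty$ function such as~$f(t)=\sin t$ violates this, since~$f''=-\sin t$ has infinitely many monotonicity intervals. Thus you cannot invoke Theorem~$4.4.15$ of~\cite{ISVZ2018} for the class of~$f$ in the conjecture: the finite combinatorial decomposition of~$\Omega_\eps$ is simply not available, and even if one approximates, the number of pieces in the foliation of the approximants can grow without bound, destroying any uniform Hessian bound obtained piece by piece.

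More seriously, you have misidentified where the delicate boundary behaviour occurs. At the \emph{fixed} boundary~$y_2=y_1^2$ one has~$y_1^2+\eps^2-y_2=\eps^2$, so~$\partial_{y_2}\ur=\tfrac12(y_1^2+\eps^2-y_2)^{-1/2}$ equals~$\tfrac{1}{2\eps}$ and does not blow up there; the blow-up occurs at the \emph{free} boundary~$y_2=y_1^2+\eps^2$. Conversely, the factor~$y_1-\ur=\sqrt{y_1^2+\eps^2-y_2}-\eps$ vanishes at the fixed boundary and tends to~$-\eps$ at the free boundary. Thus the ``cancellation against the vanishing factor~$(y_1-\ur)$'' that you invoke does not exist: the two quantities degenerate at opposite ends of the tangent segment. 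A direct computation on a right tangent domain gives~$\partial_{y_2}B=\tfrac12 m'(\ur)$, hence~$\partial^2_{y_2}B=\tfrac14 m''(\ur)(y_1^2+\eps^2-y_2)^{-1/2}$, which tends to~$\pm\infty$ at the free boundary whenever~$m''\ne 0$ at the tangency parameter. This is visible in the explicit example~$f(t)=e^{\lambda t}$ (which is excluded by your boundedness hypothesis but illustrates the phenomenon): for the closed formula~$\B_1(y)=\bigl[\tfrac{\lambda}{1-\lambda}(1-\sqrt{y_1^2-y_2+1})+1\bigr]e^{\lambda(y_1-1+\sqrt{y_1^2-y_2+1})}$, one computes~$\partial_{y_2}\B_1=\tfrac{\lambda^2}{2(1-\lambda)}e^{\lambda(y_1-1+\sqrt{y_1^2-y_2+1})}$, which is bounded, but~$\partial^2_{y_2}\B_1\to-\infty$ at the free boundary. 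Any proof of the conjecture must explain why, for uniformly bounded~$f$, such a blow-up cannot occur; your argument does not address this, and in fact locates the presumed difficulty at the wrong boundary component.

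Finally, even setting aside these two errors, what you have written is a roadmap rather than a proof: the ``careful expansion'' that is supposed to deliver the Hessian bound at the boundary, the claimed limiting value~$\tfrac12 f''(y_1)$, and the alternative PDE route via~\cite{CNS1986} or~\cite{LW2015} are all stated as things that \emph{should} work, not carried out. Given that both prerequisite statements (where the singularity lives, and the applicability of the foliation theory) are incorrect as written, the roadmap does not currently lead anywhere.
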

\begin{Conj}\label{Smoothness2}
Let~$f$ be a smooth uniformly bounded function. Then\textup, the functions~$\UB$ and~$\VB$ are of class~$C^{1,1}$.
\end{Conj}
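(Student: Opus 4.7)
The proposal is to reduce Conjecture \ref{Smoothness2} to Conjecture \ref{Smoothness1} by exploiting the piecewise construction of $\VB$ developed in Sections \ref{S3}--\ref{S5}. For $\VB$, the domain $\Sigma$ is partitioned into finitely many horizontal herringbones, vertical herringbones, squares, corners, and other bilinearity domains, and in each piece the function is given by explicit formulas whose smoothness properties are controlled by $f$ through the ODEs \eqref{difeq}, \eqref{difeq2}, and \eqref{ODEForA}. First I would verify $C^\infty$-regularity inside each piece when $f$ is smooth: standard ODE theory yields smoothness of $\Mrt$, $\Mlt$, and $A$; the parametrizing functions $a, b$ of a chordal/herringbone domain are smooth by the implicit function theorem applied to the cup equation \eqref{urlun}, since the differentials $\Dr, \Dl$ are negative there, so the Jacobian is non-degenerate. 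Inside bilinearity domains $V$ is polynomial, hence $C^\infty$.

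The heart of the argument is the behavior at interfaces. Propositions \ref{TransferToSquare}, \ref{ChordalDomainProposition}, and \ref{TrolleyToCorner} already establish $C^1$-smoothness across them, so I would next check that the one-sided limits of the second derivatives from each piece are finite, which is a direct computation using the formulas for $V$ on either side. If these jumps stay bounded, then $V$ is automatically $C^{1,1}$. A convenient organizing principle is the map $T(x_1,x_2)=(x_1, x_1^2+1-x_2^2)$ from Subsection \ref{s71}: on the free-boundary midline $x_2=0$, the identity $\VB(x_1,0) = \B(x_1,x_1^2+1)$ from Theorem \ref{TheoremForDiagonal}, together with the polynomial (hence smooth) nature of $T$, should transfer $C^{1,1}$-regularity of $\B$ to $\VB$ across the midline, taking advantage of the degeneracy $\partial_{x_2}T=-2x_2$ that cancels potential singularities. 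An auxiliary step would be to upgrade the theorem of Subsection \ref{s71} by showing $\tilde{\BG}_1 = \BG_1$ off the free boundary as well (a stronger automatic concavity statement than Fact \ref{Auto}); if this holds, the entire $\VB$ reads as $\B\circ T$ and the reduction is complete.

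For $\UB$ the paper only supplies the upper bound \eqref{FirstMajorization} and leaves Question \ref{Que1} open. I would first try to identify $\UB$ as a supremum of $\E f(M_{\infty,1})$ over $\Omega$-martingales satisfying the subordination embodied by Theorem \ref{OmegaMartTh}, obtaining a representation that would let us transfer regularity from $\B$. The hardest obstacle in the whole program is the behavior at the free boundary, where the equations \eqref{MongeAmpere} and \eqref{StrangeEquation} degenerate and the standard elliptic regularity of \cite{CNS1986} and \cite{LW2015} does not apply directly, because here boundary data is prescribed only on the fixed part of the boundary. Any resolution will likely require either a barrier argument tailored to this mixed-type boundary value problem, or a detailed analysis of how the tangent/chord foliation approaches the free boundary as the foliation parameters degenerate (points where $\Dr, \Dl \to 0$, or where tangency points reach infinity). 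This is precisely the point where Conjecture \ref{Smoothness1} itself is subtle, so I do not expect to bypass that conjecture; rather, the plan yields a genuine implication \ref{Smoothness1}$\Rightarrow$\ref{Smoothness2} modulo the automatic-concavity upgrade for $\tilde{\BG}_1$ and the transference for $\UB$.
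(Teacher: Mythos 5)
This statement is an open \emph{conjecture} in the paper (see Subsection~\ref{s74}, item~5), so there is no proof of record to compare against; you are proposing a program, not completing one, and you say as much at the end. With that understood, the program has one genuinely wrong step and several acknowledged gaps worth naming.

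The wrong step is the proposed ``automatic-concavity upgrade'' asserting that $\tilde{\BG}_1=\BG_1$ away from the free boundary, which would make $\VB=\B\circ T$ globally. This cannot hold. A chord $[(a,a^2),(b,b^2)]$ with $b-a<2$ is a subtangential segment (the line containing it stays a distance $1-(b-a)^2/4>0$ below the free parabola), hence lies in the class used to define $\BG^*_\eps$; Fact~\ref{Auto} therefore forces $\BG_1$ to be affine along such chords in any chordal domain. But the class $\tilde{\Lambda}$ only imposes concavity along \emph{tangents}, and the paper's Theorem in Subsection~\ref{s71} identifies $\tilde{\BG}_1$ with $\VB\circ T^{-1}$, which in the region corresponding to a chordal domain is affine along tangents (the $T$--images of the diagonal segments of the vertical herringbone), not along chords. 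The vertical-herringbone formula~\eqref{FirTreeFunction}, driven by the nonlocal ODE~\eqref{ODEForA}, makes this concrete: the value $A(\ell)$ depends on an integral of $f(a),f(b)$ over all larger $\ell$, so $\VB$ at an interior point of $\SFir$ is not determined by $\B$ at $T$ of that point. Thus $\tilde{\BG}_1<\BG_1$ strictly in the interior of chordal domains, and the shortcut $\VB=\B\circ T$ fails.

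Two further issues. First, the foliation machinery you invoke (Sections~\ref{S3}--\ref{S5}) produces finitely many pieces only under Conditions~\ref{reg} and~\ref{sum}; a merely smooth, bounded $f$ may have infinitely many monotonicity intervals of $f''$, so the ``finitely many interfaces, uniformly bounded jumps of $D^2V$'' argument needs an additional compactness/approximation step before $C^{1,1}$ follows. Second, for $\UB$ you have no workable representation at all beyond the upper bound~\eqref{FirstMajorization}, and Question~\ref{Que1} is open; the $\UB$ half of the conjecture is not addressed. You do correctly locate the main analytic difficulty at the degenerate free boundary of $\Omega$ (equivalently the midline $x_2=0$ of $\Sigma$), where the results of~\cite{CNS1986} and~\cite{LW2015} do not apply; that identification agrees with the paper's own discussion in Subsection~\ref{s72}, but no argument is supplied there either.
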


\subsection{Discretization and modeling}\label{s73}
Theorem~\ref{TheoremForDiagonal} allows to compute approximations to the trace of~$\B$ on the upper parabola~$(x_1,x_1^2 + 1)$. Namely, the extremal problem~\eqref{MinimalDiagonallyConcaveFunction} for the function~$\VB$ allows efficient discretization, while it is not clear whether one may find good discrete approximations of~\eqref{MinimalLocallyCOncave}. Pick a large number~$N$ and consider the rectangular lattice~$N^{-1}(\Z\times \Z)$. Pick another large natural number~$M$ and consider the part of the lattice
\eq{
L_{N,M} = \Set{(m,n)\in \Z\times \Z}{|n| \leq N, |m|\leq MN}.
}
We say that a function~$G \colon L_{N,M}\to \R$ is diagonally concave, provided
\alg{
G(m,n) &\geq \frac12\big(G(m+1,n+1) + G(m-1,n-1)\big) \ \text{and}\\
G(m,n) &\geq \frac12\big(G(m+1,n-1) + G(m-1,n+1)\big),
}
whenever the arguments lie in~$L_{N,M}$. For any boundary data~$f$, we may pose the extremal problem of finding the minimal diagonally concave function~$\VB_{N,M}\colon L_{N,M} \to \R$ satisfying the boundary condition
\eq{
\VB_{N,M}(m,\pm N) = f(m/N).
}
\begin{Rem}
The function~$(m,n)\to \VB(m/N, n/N)$ is diagonally concave on~$L_{M,N}$ and satisfies the boundary condition. Thus,
\eq{
\VB_{N,M}(m,n) \leq \VB(m/N, n/N) \qquad \text{for all}\quad (m,n)\in L_{M,N}.
}
\end{Rem}
One may prove that~$\VB_{N,M}(m,n)$ tends to~$ \VB(m/N, n/N)$ when~$M$ and~$N$ tend to infinity, and also estimate the discrepancy in terms of various smoothness characteristics of~$f$. What is so nice in these approximations, is that they are very easy to compute (also approximatively). Define the function~$G_0\colon L_{N,M}\to \R$ by the rule
\eq{
G_0(m,n) = \begin{cases}
\min\limits_{[-M,M]} f(x), \quad |n|< N;\\
f\big(m/N\big),\quad |n| = N.
\end{cases}
}
Then, define the functions~$G_1,G_2,\ldots, G_k, \ldots$ recursively:
\mlt{
G_{k+1}(m,n) = \max\Big(G_{k}(m,n), \frac12\big(G_k(m+1,n+1) + G_k(m-1,n-1)\big),\\ \frac12\big(G_k(m+1,n+1) + G_k(m-1,n-1)\big)\Big).
}
The functional sequence~$G_{k}$ does not decrease and converges to~$\VB_{N,M}$. One may take~$G_k$ with sufficiently large~$k$ as an approximation to~$\VB_{N,M}$. On Fig.~\ref{Res}, we exhibit the results of these experiments for the case~$f(t) = t^5/60 - ct^3/6$; see the detailed description of the function~$\B$ for this boundary data in Section~$6.6$ of~\cite{IOSVZ2016}.
\begin{figure}[h!]
\includegraphics[height=1.6cm]{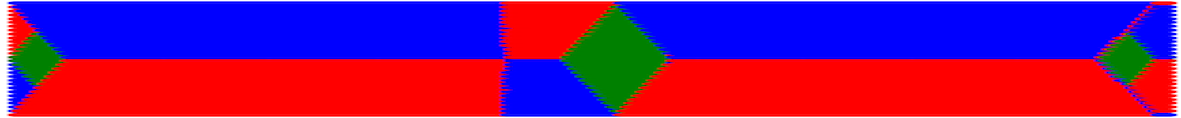}
\includegraphics[height=1.6cm]{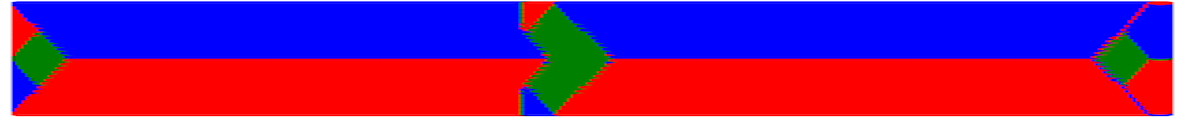}
\includegraphics[height=1.6cm]{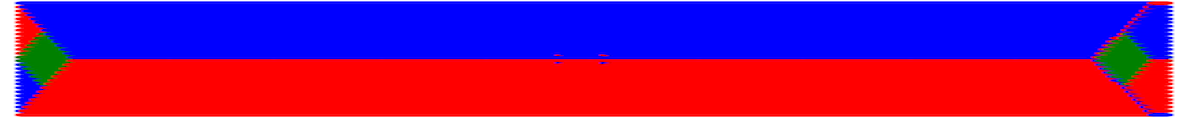}
\caption{Results of the experiment}
\label{Res}
\end{figure}
We have chosen~$N=30$,~$M=10$, and~$c=2$, $1.1$, and~$0.5$, respectively. The paper~\cite{IOSVZ2016} suggests that for~$c \leq 1$ the whole parabolic strip~$\Omega$ is foliated by left tangents, for~$c \in (1, \frac{1614}{1225})$, there will be a trolleybus, and for~$c > \frac{1614}{1225}$, there will be a cup and an angle. The three examples confirm these results with a slight shift in the parameter~$c$, which, seemingly, occurs from the truncation of the domain. Let us explain what is drawn on Fig.~\ref{Res} in more detail.

We compute the function~$\VB_{N,M}$ with the accuracy~$10^{-5}$ in the sense that we replace~$\VB_{N,M}$ by~$G_k$, where~$k$ is the first moment such that~$\|G_{k} - G_{k-1}\|_{\ell_\infty} \leq 10^{-5}$. Then, we examine the `flatness' of the constructed function. We compute two quantities
\alg{
D_1(m,n) &= \Big|G(m,n) - \frac{G(m+1,n+1)+ G(m-1,n-1)}{2}\Big|;\\
D_2(m,n) &= \Big|G(m,n) - \frac{G(m+1,n-1)+ G(m-1,n+1)}{2}\Big|
}
for any point~$(m,n) \in L_{M,N}$ (except for the boundary). If both these quantities are smaller than~$0.005$, then~$(m,n)$ is a `bilinearity point' and we mark it with green color. In the case~$D_1 < 0.005$ and~$D_2 > 0.005$, we mark~$(m,n)$ with red; in the case of reverse inequalities, we mark~$(m,n)$ with blue. Of course, there are some uncolored points~$(m,n)$ (for example, points on the boundary of~$L_{M,N}$). The choice of the numbers~$10^{-5}$ and~$0.005$ is close to being arbitrary, there are two reasonable limitations: the former number should be significantly smaller than the latter one, and the latter one should be significantly smaller than the `average' value of the derivative of the boundary data. It takes a regular laptop about 10 minutes to make the computation and draw the picture; of course, the algorithm is far from being optimal.

Thus, on the first picture in Fig.~\ref{Res}, we see a square, a vertical herringbone, and two left and one right horizontal herringbones separating and surrounding them. We disregard the drawing near the boundaries~$|y_1| = \pm M$. On the second picture, we see a corner surrounded by left horizontal herringbones. In the last picture, there is a single left horizontal herringbone. This matches the scenario predicted by the results of the present paper and Section~$6.6$ in~\cite{IOSVZ2016} (modulo a shift in the parameter~$d$). The experiments we have described here were, seemingly, known already to Burkholder, because on p. 654 of~\cite{Burkholder1984} he mentions that the experiments helped him to guess the minimal diagonally concave function. 

{\subsection{List of conjectures and questions}\label{s74}
\begin{enumerate}
\item What is the sharp form of Corollary~\ref{BoundaryBehavior}? Can one find simple if and only if conditions for~$f$ that make the functions~$\B$,~$\VB$, and~$\UB$ finite?

\item The continuity of~$f$ in Theorem~\ref{TheoremForDiagonal} seems redundant. Does~\eqref{TheoremForDiagonalFormula} hold for~$f$ being merely measurable, locally bounded, and globally bounded from below? The theory of~\cite{StolyarovZatitskiy2016} works with such functions.

\item Question~\ref{Que1} asks when the inequality in Theorem~\ref{TheoremForSubordination} turns into equality. A related (but slightly different question): when are~$\UB$ and~$\VB$ equal? The function~$f(t) = e^{\lambda t}$ provides us with the example of coincidence of the two functions (see Remark~\ref{U=V}), whether in the case~$f(t) = -e^{\lambda t}$ we have~$\VB < \UB$ (see the discussion after the cited remark).

\item Find a simple proof of Fact~\ref{Auto} or prove Conjecture~\ref{Conjecture713}.

\item Prove Conjecture~\ref{Smoothness1} or Conjecture~\ref{Smoothness2}.

\item The classical theory of Burkholder functions is generalized to the setting of vector-valued martingales, see~\cite{Osekowski2012}; some of these functions are simpler in the vectorial setting. What happens to the functions~$\UB$ and~$\VB$ defined in~\eqref{UDef} and~\eqref{VDef} if the martingales~$\varphi$ and~$\psi$ attain their values in a Hilbert space?
\end{enumerate}
}
\bibliography{mybib}{}
\bibliographystyle{amsplain}

\bigskip

Dmitriy Stolyarov

St. Petersburg State University,

d.m.stolyarov at spbu dot ru

\bigskip

Vasily Vasyunin

St. Petersburg State University,

vasyunin at pdmi dot ras do ru

\bigskip

Pavel Zatitskii

University of Cincinnati,

St. Petersburg State University,

pavelz at pdmi dot ras dot ru

\end{document}